    \setlist{nosep}
    \setlist[itemize]{label=$\circ$}
\newtheorem*{rep@theorem}{\rep@title}
\newcommand{\newreptheorem}[2]{%
\newenvironment{rep#1}[1]{%
 \def\rep@title{#2 \ref{##1}}%
 \begin{rep@theorem}}%
 {\end{rep@theorem}}}
\newtheorem{theorem}{Theorem}[section]
\newtheorem{lemma}[theorem]{Lemma}
\newtheorem{corollary}[theorem]{Corollary}
\newtheorem{proposition}[theorem]{Proposition}
\theoremstyle{definition}
\newtheorem{definition}[theorem]{Definition}
\newtheorem{exampleprimitive}[theorem]{Example}
\newtheorem{question}[theorem]{Question}
\theoremstyle{remark}
\newtheorem*{remark}{Remark}
\numberwithin{equation}{section}
\newenvironment{example}{
    
    \pushQED{\qed}
    \begin{exampleprimitive}
}{
    \popQED
    \end{exampleprimitive}
}
\newcommand{\N}{\mathbb{N}}
\newcommand{\Z}{\mathbb{Z}}
\newcommand*{\smallrightarrow}{{\scalebox{0.5}[0.75]{$\rightarrow$}}}
\newcommand*{\smallleftarrow}{{\scalebox{0.5}[0.75]{$\leftarrow$}}}
\renewcommand{\epsilon}{\varepsilon}
\renewcommand{\S}{\mathcal{S}}
    \renewcommand\@makefntext[1]{\leftskip=0em\hskip-0em\@makefnmark\,#1}
    \def\blankfootnote{\xdef\@thefnmark{\ \,}\@footnotetext}
\DeclareMathOperator{\tr}{tr}
\DeclareMathOperator{\fc}{fc}
\DeclareMathOperator{\diag}{diag}
\DeclareMathOperator{\sgn}{sgn}
\DeclareMathOperator{\spec}{spec}
\definecolor{light blue}{RGB}{115,180,255}
\tikzset{every node/.style = {shape=circle,draw,minimum size=1.3em, inner sep=0.5mm}}
\tikzset{shorten > = 1.5pt, shorten < = 1.5pt}
\tikzset{edge/.style={->, > = latex'}}
\newcommand{\inserttitle}{dynamics and entropy of $\scaleobj{0.8}{\mathcal{S}}$-graph shifts} 
\title{\vspace{-1.5em} \inserttitle}
\author{Travis Dillon}
\begin{document}

\thispagestyle{firstpage}

\makeatletter

\null
\vspace{-2\baselineskip}

\noindent
    \vbox{%
        \hsize\textwidth
        \linewidth\hsize
        {
          \hrule height 1.3\p@
          \vskip 0.2in
          \vskip -\parskip%
        }
        \centering
        {\LARGE\sc \inserttitle\par}
        {
          \vskip 0.21in
          \vskip -\parskip
          \hrule height 1.3\p@
          \vskip 0.10in%
        }
    }
\makeatother
\begin{center}\large\renewcommand{\thefootnote}{\fnsymbol{footnote}}
    Travis Dillon\footnote{Massachusetts Institute of Technology, Cambridge, Massachusetts, USA}\\[1.5em]
    
\end{center}

\blankfootnote{AMS Subject Classification: 37B10}\blankfootnote{Key words: symbolic dynamics, shift space, $S$-gap shift, entropy, zeta function, directed graph}

\setcounter{footnote}{0}\renewcommand{\thefootnote}{\arabic{footnote}}

\begin{abstract}
\noindent
$S$-gap shifts are a well-studied class of shift spaces, which has led to several proposed generalizations. This paper introduces a new class of shift spaces called $\mathcal{S}$-graph shifts whose essential structure is encoded in a novel way, as a finite directed graph with a set of natural numbers assigned to each vertex. $\mathcal{S}$-graph shifts contain $S$-gap shifts and their generalizations, as well as all vertex shifts and SFTs, as special cases, thereby providing a method to study these shift spaces in a uniform way. The main result in this paper is a formula for the entropy of any $\mathcal{S}$-graph shift, which, by specialization, resolves a problem proposed by Matson and Sattler. A second result establishes an explicit formula for the zeta functions of $\mathcal{S}$-graph shifts. Additionally, we show that every entropy value is obtained by uncountably many $\mathcal{S}$-graph shifts. 
\end{abstract}

\section{Introduction}

Let $\mathcal A$ be a finite set. A \textit{shift space} is a closed set $X \subseteq \mathcal A^{\Z}$ that is invariant under the \textit{shift map} $\sigma\colon \mathcal A^{\Z} \to \mathcal A^{\Z}$ defined in section \cref{sec:background}. $S$-gap shifts comprise a well-studied class of shift spaces; among other reasons, they are especially useful in producing simple concrete examples of shift spaces with a variety of dynamical properties. They also have an especially rich structure, and analyses of $S$-gap shifts have drawn from many areas, including algebra \cite{computations-S-gap}, topology \cite{dynamics-and-topology}, number theory \cite{dynamical-properties, equivalencies}, and ergodic theory \cite{intrinsic-ergodicity}.

Each $S$-gap shift is uniquely defined by a subset $S$ of the natural numbers. Motivated by coding theory, Dastjerdi and Shaldehi \cite{S-prime} introduced $(S,S')$-gap shifts, a generalization of $S$-gap shifts that are defined by a pair of subsets of the natural numbers, and studied their dynamical properties. Later, Matson and Sattler \cite{S-limited} further generalized $(S,S')$-gap shifts, in two different ways, to ordered and unordered $\mathcal{S}$-limited shifts; any $\mathcal{S}$-limited shift is specified by a finite tuple of subsets of the natural numbers. They extended many of the dynamical results for $S$- and $(S,S')$-gap shifts in \cite{dynamics-and-topology} and \cite{S-prime} to $\mathcal{S}$-limited shifts.

Occupying a central role in each of these investigations was the determination of an entropy formula for the class of shift spaces under consideration. Entropy is one of the key invariants in symbolic dynamics; roughly speaking, entropy provides a measure of the complexity of a shift space. Spandl \cite{spandl} first computed the entropy of $S$-gap shifts in 2007, obtaining a formula in terms of the root of a certain power series (see \cref{thm:S-gap-entropy}).

Unfortunately, Spandl's proof was incomplete. Recently, Garc\'ia-Ramos and Pavlov filled the gap in Spandl's proof and provided a separate one of their own \cite[Corollaries 3.20 and 3.21]{extender-sets}. For a period of several years before this paper, the canonical proofs of the $S$-gap entropy formula were the two alternative ones presented on Vaughn Climenhaga's blog.\footnote{\url{https://vaughnclimenhaga.wordpress.com/2014/09/08/entropy-of-s-gap-shifts/}} Matson and Sattler extended one of Climenhaga's proofs to produce an entropy formula for ordered $\mathcal{S}$-limited shifts. They note, however, that their approach is insufficient to prove a formula for unordered $\mathcal{S}$-limited shifts.

Finding and proving such a formula is the motivation for this work. This paper introduces a much larger class of shift spaces, called $\mathcal{S}$-graph shifts, that contains ordered and unordered $\mathcal{S}$-limited shifts, as well as all subshifts of finite type, as special cases. Each $\mathcal{S}$-graph shift is defined by a finite set-equipped graph, formed by assigning a subset of the natural numbers to each vertex. As described in \cref{sec:entropy-formula}, we can assign a matrix-valued function $B(x)$, called the generating matrix, to each $\mathcal{S}$-graph shift. Our first main result is an entropy formula for $\mathcal{S}$-graph shifts.

\begin{theorem}\label{thm:entropy_eigen=1}
    Let $X$ be an $\mathcal{S}$-graph shift, $B(x)$ its generating matrix, and $\rho(x)$ the spectral radius of $B(x)$. The equation $\rho(x) = 1$ has a unique positive solution $x = \lambda$, and the entropy of $X$ is $\log \lambda^{-1}$.
\end{theorem}

In particular, \cref{thm:entropy_eigen=1} provides an entropy formula for unordered $\mathcal{S}$-limited shifts by specialization, thereby resolving Matson and Sattler's problem in a much more general setting. \cref{thm:entropy_eigen=1} may also be formulated in terms of a power series which simplifies to the one in \cref{thm:S-gap-entropy} when $X$ is an $S$-gap shift (see \cref{thm:entropy_sumovercycles}).

Recently, Ronnie Pavlov obtained an entropy formula for coded shift spaces \cite{coded-entropy}. Every irreducible $\S$-graph shift is coded, so Pavlov's formula may in principle be applied to them. However, this paper introduces not just a new entropy formula but also a new set of linear algebraic tools with which to analyze these shift spaces. For example, we use properties of the matrix $B(x)$ to prove a strong result on the distribution of entropy among $\mathcal{S}$-graph shifts.

\begin{theorem}\label{thm:every-entropy}
    Let $\lambda > 1$. For each $q \geq 2\lceil \lambda \rceil - 1$, there are uncountably many pairwise non-conjugate $\mathcal{S}$-graph shifts on $q$ vertices that have the specification property and entropy $\log \lambda$.
\end{theorem}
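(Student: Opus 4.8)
The plan is to exhibit, for each admissible $n$, an explicit uncountable family $\{X_\theta\}$ of $\mathcal{S}$-graph shifts on $n$ vertices sharing one fixed strongly connected skeleton, to engineer the vertex sets so that every $X_\theta$ has entropy exactly $\log\lambda$ and the specification property, and then to separate the $X_\theta$ by a conjugacy invariant. Write $k=\lceil\lambda\rceil$. I would fix a strongly connected digraph on $n\ge 2k-1$ vertices whose out-degrees are large enough that its generating matrix can attain spectral radius $1$ at $x=1/\lambda$: roughly $k$ of the vertices supply the branching needed to push the entropy up to $\log\lambda$ (recall $\log\lambda\le\log k$), while the remaining $k-1$ vertices supply the extra degrees of freedom the rest of the argument consumes. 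By \cref{thm:S-graph-entropy-formula} it then suffices to choose the vertex sets so that the spectral radius $\rho_\theta(x)$ of the generating matrix satisfies $\rho_\theta(1/\lambda)=1$; since $\rho_\theta(x)=1$ has a unique positive root, that root is forced to equal $1/\lambda$, giving entropy $\log(1/\lambda)^{-1}=\log\lambda$.

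First I would arrange the entropy. Evaluating the generating matrix at $x=1/\lambda$, each set $S_v$ enters only through the value $f_v:=\sum_{s\in S_v}\lambda^{-(s+1)}\in[0,\tfrac{1}{\lambda-1}]$, i.e. through the summand of \cref{thm:S-gap-entropy} evaluated at $\lambda$, so the condition $\rho_\theta(1/\lambda)=1$ is a single analytic equation in the tuple $(f_v)_v$. Because $\rho$ is strictly increasing in each $f_v$, its solution set is a codimension-one entropy level set, and the entire reason for using $2k-1$ rather than $k$ vertices is that this level set is then genuinely positive-dimensional: I can move a designated pair of free vertices $u,w$ along it, compensating an increase of $f_u$ by a decrease of $f_w$, so that $\rho_\theta(1/\lambda)=1$ persists throughout an uncountable motion. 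This is essential, since for rigid bases (integer $\lambda$, and $\lambda=2$ in particular) a single vertex admits only countably many sets with a prescribed value of $f_v$, so one free vertex could not produce uncountably many examples.

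Next I would secure specification uniformly by invoking the paper's structural characterization, choosing the skeleton to be strongly connected and using only vertex sets that are syndetic and all contain $0$ (forcing a length-one return, hence return-length gcd $1$). Altering the sets never changes the edge set as long as they remain nonempty, so every $X_\theta$ meets the sufficient condition for specification independently of $\theta$. Restricting the free parameter to syndetic sets still leaves uncountably many options — for instance sets of the form $(\text{all even numbers})\cup A$ with $A$ an arbitrary subset of the odd numbers, all of which have gaps at most $2$ — so no generality is lost.

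The main obstacle is the pairwise non-conjugacy, and this is where the real work lies, since entropy is constant across the family and cannot distinguish its members. I would separate the $X_\theta$ by the zeta function, whose explicit formula for $\mathcal{S}$-graph shifts is available, and which depends on the vertex sets themselves (equivalently on the generating matrix as a function of $x$), not merely on the entropy. Placing the encoding vertex $u$ so that its first-return loops occupy lengths in a residue class disjoint from those produced by the compensating vertex $w$ and by the skeleton, I would show that a prescribed subsequence of the periodic-point counts reads off exactly the indicator of the parameter set $A$; hence the zeta function recovers $A$, and distinct $A$ yield non-conjugate shifts. Two points require care: (i) checking that the compensation at $w$ can always be realized set-theoretically for every parameter value while keeping $w$'s contribution out of the $u$-loop counts; and (ii) confirming that only countably many parameters can share a conjugacy class, which follows once the invariant recovers $A$. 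Finally, extending from the threshold $n=2k-1$ to every larger $n$ is routine: one adjoins vertices on a deterministically traversed cycle and retunes a single free vertex to restore $\rho(1/\lambda)=1$, preserving strong connectivity, the gcd and syndeticity conditions, and hence all three properties.
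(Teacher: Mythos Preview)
Your outline diverges substantially from the paper's argument, and while the strategy is not unreasonable, several of its load-bearing steps are left as promises rather than proofs.

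The paper avoids both of your hard parts. It first builds a \emph{single} $\mathcal S$-graph shift on $2(\lceil\lambda\rceil-1)$ vertices with entropy $\log\lambda$ and specification, using a carefully chosen $\lambda$-expansion of $1$ on the complete bipartite graph $K_{m,m}$ with $m=\lceil\lambda\rceil-1$. It then manufactures uncountably many shifts on $2\lceil\lambda\rceil-1$ vertices by \emph{vertex cloning}: partition one vertex set $S_1=A\sqcup B$ into two infinite syndetic pieces in uncountably many ways. The crucial point is that vertex cloning preserves entropy and mixing automatically (\cref{thm:cloning_preserves_entropy}), so no ``compensation along a level set'' is needed: each clone already has entropy exactly $\log\lambda$. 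Specification survives because the cloned sets are kept syndetic. Pairwise non-conjugacy is obtained by the soft observation that any shift on a fixed alphabet has only countably many conjugates on that alphabet (each conjugacy is a sliding block code, and there are countably many of those); hence an uncountable family must contain uncountably many conjugacy classes. No zeta-function computation is required.

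By contrast, in your plan the two central mechanisms are only sketched. First, the level-set argument: you assert that one can move $(f_u,f_w)$ continuously while realizing both values by \emph{syndetic} subsets of $\N$, but you do not show that the required compensating value $f_w$ is always realizable (syndeticity is a nontrivial constraint on which $f$-values are attainable), nor do you justify why the threshold is exactly $2\lceil\lambda\rceil-1$ rather than some other number---your ``$k$ vertices for branching, $k-1$ for freedom'' heuristic is not tied to any computation. Second, your separation by the zeta function hinges on arranging that periodic-point counts in one residue class read off the indicator of the parameter set $A$ while the compensating vertex $w$ contributes nothing in those lengths; you flag this as ``requiring care'' but give no construction. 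These are not minor details: each could be a paper section on its own. (There is also a small slip: $\mathcal S$-graph vertex sets are subsets of $\N$, so ``containing $0$'' should be ``containing $1$''.) The paper's route via cloning and the countable-conjugates lemma sidesteps both difficulties entirely.
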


Baker and Ghenciu, in Proposition 4.2 and Theorem 4.3 of \cite{dynamical-properties}, provide similar results that are restricted to $S$-gap shifts.

In the final section of this paper, we turn our attention to another central invariant of shift spaces: the zeta function, which encodes all the periodic point information of a shift space. Our final main result establishes an explicit formula for the zeta function of any $\S$-graph shift, again in terms of the generating matrix $B(x)$.

\begin{theorem}\label{thm:big-zeta}
    Let $X$ be an $\S$-graph shift and $B(x)$ be its generating matrix. The zeta function of $X$ is
    \[
        \zeta_X(t) = \frac{1}{(1-t)^{p_1(X)}\det\!\big(I-B(t)\big)}.
    \]
\end{theorem}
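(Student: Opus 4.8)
The plan is to compute the zeta function by relating periodic points of an $\mathcal S$-graph shift to closed walks in an associated combinatorial object, and then to evaluate the resulting generating function via a matrix determinant identity. Recall that the zeta function is defined by $\zeta_X(t) = \exp\!\big(\sum_{n\geq 1} \tfrac{p_n(X)}{n} t^n\big)$, where $p_n(X)$ counts the points of period $n$. The governing principle I would exploit is the Bowen–Lanford formula: for an edge shift on a graph with adjacency matrix $A$, one has $\zeta(t) = 1/\det(I - tA)$ because $p_n$ equals $\tr(A^n)$, and $\exp\!\big(\sum_n \tfrac{\tr(A^n)}{n} t^n\big) = \exp\big(-\tr\log(I-tA)\big) = 1/\det(I-tA)$. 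The whole proof is an adaptation of this identity where the scalar adjacency matrix is replaced by the generating matrix $B(t)$, whose entries are power series in $t$ that bookkeep the set-valued labels on the vertices.

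First I would set up the correspondence between periodic points of $X$ and the labeled structure encoded by $B(x)$. An $\mathcal S$-graph shift is built from a finite directed graph with a subset $S_v \subseteq \N$ attached to each vertex $v$; a point in $X$ corresponds to an infinite walk together with, at each visited vertex $v$, a choice of a permissible block length drawn from $S_v$. The generating matrix $B(x)$ should be exactly the transfer matrix that records this data: the $(u,v)$ entry is a sum $\sum_{s} x^{s}$ (or $x^{s+1}$, matching the convention fixed in \cref{sec:entropy-formula}) over the lengths $s$ associated with traversing from $u$ to $v$, so that the $(u,v)$ entry of $B(t)^k$ is the generating function counting walks of combinatorial length $k$ from $u$ to $v$ weighted by total symbol-length. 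I would then argue that a point of period $n$ decomposes as a closed walk whose total weighted length is $n$, so that the number of such points is captured by $\tr\big(\sum_{k\geq 1} B(t)^k\big)$ read off at the appropriate coefficient — more precisely, I expect the clean statement to be that $\sum_{n\geq 1} p_n(X)\, t^n = t\frac{d}{dt}\log\det(I - B(t))^{-1}$ up to the correction coming from the fixed-point contribution.

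Next I would handle the determinant computation formally. Writing $\sum_{n\geq 1} \tfrac{p_n(X)}{n} t^n$, I would substitute the trace-of-powers expansion $\sum_{k\geq 1}\tfrac{1}{k}\tr B(t)^k = -\tr\log(I - B(t)) = -\log\det(I - B(t))$ and exponentiate to obtain the factor $1/\det(I - B(t))$. The extra factor $1/(1-t)^{p_1(X)}$ accounts for the fixed points of $X$, i.e. the constant (period-one) points: these are the symbols that can be repeated indefinitely, contributing $p_1(X)$ independent cycles each of which produces a point of every period $n$, hence a factor $\exp\!\big(p_1(X)\sum_{n\geq 1}\tfrac{t^n}{n}\big) = (1-t)^{-p_1(X)}$. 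I would need to verify carefully that these fixed-point contributions are exactly the ones \emph{not} already accounted for in the walk-counting encoded by $B(t)$ — that is, the bookkeeping in $B(t)$ must be shown to omit (or double-count in a controlled way) precisely the constant points, which is why they reappear as a separate explicit factor.

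The main obstacle, I expect, is making the combinatorial bijection between periodic points and weighted closed walks fully rigorous, especially at the boundaries. Two subtleties demand care. First, a periodic point of the shift is a bi-infinite sequence, and cutting it into ``blocks'' dictated by the vertex labels requires choosing a canonical cut point; I must show the resulting count of period-$n$ points is independent of this choice and matches $\tr B(t)^k$ coefficients exactly, with no over- or under-counting from blocks that wrap around the period. Second, the degenerate blocks — in particular symbols that correspond to the value that generates constant points — interact with the determinant factorization and must be separated out cleanly to yield the isolated $(1-t)^{-p_1(X)}$ factor; getting the combinatorial classes to align with the analytic factorization is where the real work lies. Once the bijection is pinned down, the algebraic identity $\exp(-\log\det(I-B(t))) = \det(I-B(t))^{-1}$ finishes the proof essentially immediately, so I would allocate nearly all the effort to the counting argument and treat the determinant manipulation as routine.
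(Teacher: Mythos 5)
Your proposal follows essentially the same route as the paper: the paper likewise separates the fixed points (yielding the $(1-t)^{-p_1(X)}$ factor, since each fixed point has period $n$ for every $n$) from the periodic points of least period at least two, which it counts via closed walks in the set-equipped graph to prove $\sum_{n\geq 1}\tfrac{1}{n}\tr\big(B_{\mathcal G}(t)^n\big) = \sum_{n\geq 1}\tfrac{p_n-p_1}{n}t^n$ (\cref{thm:trace-powers-zeta-fn}), and then applies the identity $\exp\big(\sum_{n\geq 1}\tfrac{1}{n}\tr(A^n)\big) = 1/\det(I-A)$ (\cref{thm:trace-sum-to-det}). The combinatorial bookkeeping you correctly flag as the main work --- matching periodic orbits to closed walks with a controlled count of starting positions, and verifying that the walk count omits exactly the fixed points --- is precisely what the paper carries out using fundamental closed walks and the factor $\ell(\fc C)$.
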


A straightforward corollary of this theorem is a simple explicit formula for the zeta function of any ordered $\S$-limited shift (see \cref{thm:ordered-s-limited-zeta}). Even this restricted result greatly extends the work in \cite{computations-S-gap} and \cite{S-prime}, which established the zeta functions of $S$- and $(S,S')$-gap shifts, respectively.

Together, Theorems \ref{thm:entropy_eigen=1}, \ref{thm:every-entropy}, and \ref{thm:big-zeta} comprise a compelling case for the power of the generating matrix as a tool in the analysis of shift spaces. (Though \cref{thm:every-entropy} doesn't mention the generating matrix, the properties of this matrix are central in the proof of the theorem.) It seems likely that the generating matrix will be able to shed light on other key questions, such as the determination of conjugate shifts. Though the conjugacy problem was solved completely for $S$-gap shifts in \cite{dynamics-and-topology}, understanding is limited even for $(S,S')$-gap shifts. Dastjerdi and Shaldehi find one necessary and several sufficient conditions for conjugacy in \cite{S-prime}, which Matson and Sattler extend to ordered $\S$-limited shifts in \cite{S-limited}; this seems to be the extent of the current progress on this problem.

The paper is organized as follows. In \cref{sec:background}, we describe the relevant background from symbolic dynamics and basic definitions for $\mathcal{S}$-graph shifts. \cref{sec:dynamical-properties} provides characterizations of the $\mathcal{S}$-graph shifts that have key dynamical properties, extending the work in \cite{dynamical-properties} and \cite{S-limited}. The proof of \cref{thm:entropy_eigen=1} comprises \cref{sec:entropy-formula}. \cref{sec:graph-operations} leverages \cref{thm:entropy_eigen=1} to characterize $\S$-graph shifts with entropy 0, while  \cref{sec:entropy-construction} proves \cref{thm:every-entropy}. Together, Sections \ref{sec:entropy-formula}, \ref{sec:graph-operations}, and \ref{sec:entropy-construction} introduce a collection of new techniques for analyzing a large class of shift spaces. In \cref{sec:intrinsic-ergodicity}, we prove that every $\S$-graph shift is intrinsically ergodic. \cref{sec:zeta} proves \cref{thm:big-zeta}. For ease of reading the body of this paper, proofs of the more technical results are collected in \cref{sec:appendix}.

\section{Background and definitions}\label{sec:background}

Proofs of the assertions made in this section can be found in the introductory symbolic dynamics text by by Lind and Marcus \cite{Lind-Marcus}. Examples of the definitions appear following the definition of $S$-gap shifts (\cref{def:S-gap}).

Throughout this paper, we let $\N_0$ denote the set of nonnegative integers and $\N$ denote the set of positive integers. The symbol $\mathcal A$ will always denote a finite set called the \textit{alphabet}, whose elements are called \textit{letters}. The \textit{full shift on $\mathcal A$}, denoted $\mathcal A^\Z$, is the set of all bi-infinite strings of elements in $\mathcal A$. Equivalently, the full shift is the set of all sequences $(x_n)_{n=-\infty}^\infty$ with coordinates in $\mathcal A$; this index notation will be used throughout the paper.

The \textit{shift map} $\sigma\colon \mathcal A^\Z \to \mathcal A^\Z$ sends the point $x \in \mathcal A^\Z$ to the point $y \in \mathcal A^\Z$ that satisfies $y_i = x_{i+1}$ for each $i \in \Z$. We equip the full shift with a metric $d$, setting $d(x,y) = 2^{-(k+1)}$ when $k$ is the greatest integer such that $x_{-k}\cdots x_{k} = y_{-k} \cdots y_{k}$ and $d(x,y) = 0$ if no such $k$ exists. (If $x_0 \neq y_0$, then $d(x,y) = 1$.) With respect to this metric, $\mathcal A^\Z$ is a compact metric space and $\sigma$ is continuous. A \textit{subshift} or \textit{shift space} over $\mathcal A$ is a subset $X$ of $\mathcal A^\Z$ that is closed and shift-invariant (that is, $\sigma(X) = X$).

A \textit{word} over $\mathcal A$ is a finite sequence of letters. We denote the concatenation of two words by juxtaposition, so if $x = x_1\cdots x_n$ and $y = x_{n+1}\cdots x_{n+k}$, then $xy = x_1\cdots x_{n+k}$. The $n$-fold concatenation of $u$ is denoted by $u^n$. We say that a word $\alpha = \alpha_1\cdots\alpha_k$ \textit{appears} in a point $x \in \mathcal A^\Z$ if $\alpha$ is a consecutive subsequence of $x$, that is, $\alpha = x_n\cdots x_{n+k-1}$ for some $n \in \Z$.

\begin{definition}
    For each set $\mathcal F$ of words over $\mathcal A$, we define
    \[ X_\mathcal F = \{x \in \mathcal A^\Z : \text{no element of $\mathcal F$ appears in } x. \} \]
\end{definition}

It turns out that the subspaces defined by forbidden words are exactly the subshifts of $\mathcal A^\Z$: Each set $X_\mathcal F$ is both closed and shift-invariant, and for each subshift $X$ there is a set of forbidden words $\mathcal F$ (not necessarily unique) such that $X = X_\mathcal F$. We say that a subshift $X$ is a \textit{subshift of finite type} (or SFT) if there is a finite set $\mathcal F$ such that $X = X_\mathcal F$.

The set of \textit{allowable words of length $n$}, denoted $\mathcal B_n(X)$, is the set of words of $n$ letters that appear in some word in $X$. The \textit{language} of $X$ is the set $\mathcal L(X) = \bigcup_{n=1}^\infty \mathcal B_n(X)$ of all allowable words.

\begin{theorem}\label{thm:language-equality}
    Two shift spaces are equal if and only if their languages are.
\end{theorem}

The entropy of a shift space is a measure of the size of its language, which, to a certain extent, provides a measure of the complexity of the shift space.

\begin{definition}
    The \textit{\emph{(}topological\emph{)} entropy} of a shift space $X$ is
    \[ h(X) = \lim_{n\to\infty} \frac{1}{n} \log \lvert \mathcal B_n(X)\rvert. \]
\end{definition}

This limit always exists if $X$ is nonempty, in which case $1 \leq \lvert \mathcal B_n(X)\rvert \leq \lvert \mathcal A\rvert^n$; consequently $0 \leq h(X) \leq \log\, \lvert \mathcal A \rvert$.

A point $x \in \mathcal A^\Z$ is called \textit{periodic} with period $n$ if $\sigma^n(x) = x$; the point $x$ has \textit{least period} $n$ if furthermore $\sigma^k(x) \neq x$ for each $k \in \{1,\dots,n-1\}$. Any word $x=x_1x_2\cdots x_n$ gives rise to a periodic point $x^\infty$ defined by $x^\infty_j = x_{j \text{ mod } n}$. The number of points in $X$ with period $n$ is denoted $p_n(X)$, and the number of points in $X$ with least period $n$ is denoted $q_n(X)$. The \textit{zeta function} is a power series that gathers the information on the periodic points of a shift space into a single object:
\[
    \zeta_X(t) = \exp\!\left(\,\sum_{n=1}^\infty \frac{p_n(X)}{n}t^n\right).
\]

Given two shift spaces $X$ and $Y$, a \textit{homomorphism} from $X$ to $Y$ is a continuous map $\phi\colon X \to Y$ that commutes with the shift map. A \textit{conjugacy} is a bijective homomorphism with continuous inverse; if there is a conjugacy from $X$ to $Y$, we call $X$ and $Y$ \textit{conjugate} shift spaces and write $X \cong Y$. If $X$ and $Y$ are conjugate, then $h(X) = h(Y)$. Moreover, $p_n(X) = p_n(Y)$ and $q_n(X) = q_n(Y)$ for every $n \in \N$.

\begin{definition}
    Let $X$ be a shift space, $m, n \in \N_0$, and $\Phi\colon \mathcal B_{m+n+1}(X) \to \mathcal A$. The \textit{sliding block code} with memory $m$ and anticipation $n$, also called an $(m+n+1)$-block code, is the map $\phi\colon X \to Y$ defined by
    \[ \phi(x)_i = \Phi(x_{i-m}\cdots x_{i+n}). \]
\end{definition}

Every sliding block code is a homomorphism. The Curtis-Hedlund-Lyndon Theorem is the surprising result that the converse is true: Every homomorphism between two shift spaces is a sliding block code.

A shift space is called \textit{sofic} if it is the image of an SFT under a homomorphism. The \textit{follower set} of a word $\alpha \in \mathcal L(X)$ is $F_X(\alpha) = \{\beta \in \mathcal L(X) : \alpha\beta \in \mathcal L(X) \}$.

\begin{theorem}\label{thm:sofic-follower-sets}
    A shift space $X$ is sofic if and only if it has a finite number of distinct follower sets.
\end{theorem}

For example, the full shift has only one follower set: For any $\alpha \in \mathcal A^\Z$, we have $F_{\mathcal A^\Z}(\alpha) = \mathcal A^\Z$. So $\mathcal A^\Z$ is sofic.

\begin{definition}\label{def:S-gap}
    For each $S \subseteq \N_0$, the $S$-gap shift $X(S)$ is the subshift of $\{0,1\}^\Z$ obtained by taking the closure of the set
    \[ \{ \cdots 10^{s_{-1}}10^{s_0}10^{s_1}1 \cdots
        \, :\mspace{1.5mu} s_i \in S \text{ for each } i \in \N \} \]
    with respect to the metric $d$.
\end{definition}

Taking the closure has no effect when $S$ is finite; when $S$ is infinite, it allows strings to begin or end with infinitely many 0's.

\begin{example}[Golden mean shift]\label{ex:golden-mean}
    The shift space $X(\N)$ is called the \textit{golden mean shift}; it is the subshift of $\{0,1\}^{\Z}$ consisting of all words without consecutive 1's. The name is derived from the entropy of $X(\N)$: Using \cref{thm:S-gap-entropy}, it is straightforward to show that $h(X(\N)) = (1+\sqrt{5})/2$. If we set $\mathcal F = \{11\}$, then $X(\N) = X_{\mathcal F}$, so the golden mean shift is an SFT; since any SFT is sofic, $X(\N)$ is sofic. Finally, $\mathcal B_3(X(\N)) = \{000,001,010,100,101\}$.
\end{example}

\begin{example}[Even shift]
    The shift space $X(2\N_0)$ is called the \textit{even shift}. Since no finite set of words can forbid $10^{2n+1}1$ for every $n \in \N_0$ while simultaneously allowing $10^{2n}1$ for every $n \in \N_0$, the even shift is not an SFT. However, every follower set of $X(2\N_0)$ is equal to one of $F_{X(2\N_0)}(0)$, $F_{X(2\N_0)}(1)$, or $F_{X(2\N_0)}(10)$, so the even shift is sofic. Alternatively, define $\Phi\colon \mathcal \{0,1\}^3 \to \{0,1\}$ by
    \[  \Phi(x_1x_2x_3) = \begin{cases}
            0 &\text{if } x_1x_2 = 01 \text{ or } x_2x_3 = 01\\
            1 &\text{otherwise.}
        \end{cases} \]
    Let $\phi\colon X(\{0,1\}) \to X(2\N_0)$ be the sliding block code with memory 1 and anticipation 1 induced by $\Phi$. Since $X(\{0,1\})$ is an SFT (take $\mathcal F = \{00\}$) and $\phi$ is surjective, the even shift is sofic.
\end{example}

If we allow restrictions on blocks of consecutive 1's as well 0's, we obtain $(S,S')$-gap shifts  \cite{S-prime}.

\begin{definition}
    Let $S,S' \subseteq \N$. The $(S,S')$-gap shift $X(S,S')$ is the subshift of $\{0,1\}^\Z$ obtained by taking the closure of
    \[ \{ \cdots 1^{s'_{-1}}0^{s_{-1}}1^{s'_0}0^{s_0}1^{s'_1}0^{s_1} \cdots
        \,:\mspace{1.5mu} s_i \in S \text{ and } s'_i \in S' \text{ for each } i \in \N \}. \]
\end{definition}

The specialization to $S$-gap shifts has two cases: If $0 \notin S$, then $X(S) = X(S,\{1\})$; if $0 \in S$, then $X(S) = X(S,\N)$.  $\mathcal{S}$-limited shifts are extensions of $(S,S')$-gap shifts to alphabets with more than two letters \cite{S-limited}. We denote by $[n]$ the set $\{1,2,\dots,n\}$.

\begin{definition}
    Let $n \geq 2$ and $\mathcal S = (S_1,\dots, S_n)$ be an $n$-tuple of nonempty subsets of $\N$. We set $G_\mathcal S = \{1^{m_1}2^{m_2}\cdots n^{m_n} : m_i \in S_i \text{ for each } i \in [n] \}$. The \textit{ordered $\mathcal{S}$-limited shift}, denoted $\vv{X}(\mathcal S)$, is the subshift of $\{1,2,\dots,n\}^\Z$ obtained by taking the closure of
    \[ \{\cdots x_{-1}x_0x_1 \cdots
            \,:\mspace{1.5mu} x_i \in G_{\mathcal S} \text{ for each } i \in \Z\}. \]
    The \textit{unordered $\mathcal{S}$-limited shift}, denoted $X(\mathcal S)$, is the subshift of $\{1,2,\dots,n\}^\Z$ obtained by taking the closure of
    \[ \{\cdots a_{-1}^{m_{-1}}a_0^{m_0}a_1^{m_1}\cdots
            \,:\mspace{1.5mu} a_i \in [n]\text{, } m_i \in S_{a_i} \text{, and } a_{i+1} \neq a_i \text{ for each } i \in \Z \}. \]
\end{definition}

Ordered and unordered $\mathcal{S}$-limited shifts coincide when $n=2$, yielding $(S,S')$-gap shifts.\footnote{In \cite{S-limited}, ordered and unordered $\mathcal{S}$-limited shifts are referred to as ``$\mathcal{S}$-limited shifts'' and ``generalized $\mathcal{S}$-limited shifts'' and denoted by $X(\mathcal{S})$ and $X_\mathcal{S}$, respectively. Since most $\mathcal{S}$-limited shifts are not generalized $\mathcal{S}$-limited shifts, this paper instead uses the modifiers ``ordered'' and ``unordered.'' The notation was also adjusted to avoid overloading shift space construction using subscripts.}

To describe $\mathcal{S}$-graph shifts, we need the language of graph theory. Unless otherwise mentioned, every graph in this paper is finite, directed, and simple (no loops and no multiple edges). The edge and vertex sets of a graph $G$ are denoted by $E(G)$ and $V(G)$, respectively. A vertex of $G$ is \textit{essential} if its in-degree and out-degree are both at least 1; a graph is \textit{essential} if every vertex is. The underlying essential graph of a graph $G$ is obtained by repeatedly removing all nonessential vertices until none remain. A \textit{walk} on $G$ is a finite sequence of vertices $(v_1,\dots, v_n)$ such that $(v_i,v_{i+1}) \in E(G)$ for each $i \in \{1,2,\dots,n-1\}$. We call $G$ \textit{irreducible}\phantomsection\label{def:irreducible-graph} if there is a walk from $u$ to $v$ for every $u,v \in V(G)$.  The set of all walks on $G$ is denoted $W(G)$. A walk $(v_1,\dots,v_n)$ is called \emph{closed} if $n\geq 2$ and $(v_n,v_1) \in E(G)$;\footnote{This differs from the usual definition of a closed walk in a graph, in which the last vertex \textit{equals} the first vertex, but our definition simplifies the correspondence between closed walks and dynamical properties in this paper.} a closed walk is a \emph{cycle} if all vertices in the walk are distinct.

Given a graph $G$, the \textit{vertex shift on $G$} is the set
\[ \{ \cdots u_{-1}u_0u_1 \cdots
        \,:\mspace{1.5mu} u_i \in V(G) \text{ and } (u_i,u_{i+1}) \in E(G) \text{ for each } i \in \Z\}. \]
A more robust construction uses vertex labels.

\begin{definition}
    A \textit{labelled graph} is a pair $(G,\ell)$, where $G$ is a graph, $\mathcal A$ is a finite alphabet, and $\ell\colon V(G) \to \mathcal A$. The shift space generated by a labelled graph $\mathcal G$, denoted $X_\mathcal G$, is the set
    \[ \{ \cdots \ell(u_{-1})\ell(u_0)\ell(u_1) \cdots
        \,:\mspace{1.5mu} u_i \in V(G) \text{ and } (u_i,u_{i+1}) \in E(G) \text{ for each } i \in \Z \}. \]
\end{definition}

Every shift space obtained from a labelled graph is sofic, and for every sofic shift $X$ there is a labelled graph $\mathcal G$ so that $X = X_\mathcal G$.

Having set the scene, we now introduce $\mathcal{S}$-graph shifts, which fuse ideas from $S$-gap shifts and vertex shifts.

\begin{definition}
    A \textit{set-equipped graph} is a pair $(G,\mathcal S)$ where $G$ is a directed graph and $\mathcal S = (S_v)_{v \in V(G)}$ is an indexed collection of nonempty subsets of $\N$. If $\mathcal G = (G,\mathcal S)$ is a set-equipped graph, the $\mathcal{S}$-graph shift $X(\mathcal G) = X(G,\mathcal S)$ is the closure (in the full shift $V(G)^\Z$) of the set
    \[ \{ \cdots u_{-1}^{n_{-1}}u_0^{n_0}u_1^{n_1}
        \,:\mspace{1.5mu} u_i \in V(G)\text{, } n_i \in S_{u_i}\text{, and } (u_i,u_{i+1}) \in E(G) \text{ for each } i \in \Z \}. \]
\end{definition}

Every vertex shift is an $\S$-graph shift (by associating the singleton $\{1\}$ with every vertex); since every SFT is conjugate to a vertex shift \cite[Theorem 2.3.2]{Lind-Marcus}, every SFT is (conjugate to) an $\S$-graph shift, as well. If $G$ is a directed cycle, then $X(G,\mathcal S)$ is the ordered $\mathcal{S}$-limited shift $\vv{X}(\mathcal{S})$; if $G$ is a complete graph (with directed edges from each vertex to every other), then $X(G,\mathcal S)$ is the unordered $\mathcal S$-limited shift $X(\mathcal S)$. Intuitively, these two examples represent the extreme cases of $\S$-graph shifts: any essential connected directed graph has at least as many edges as a directed cycle and at most as many edges as a complete graph.

There are many $\S$-graph shifts that do not lie at these extremes. As the following example shows, $\S$-limited shift comprise a proper subset of $\S$-graph shifts.

\begin{example}
    Let $G$ be the graph
    \begin{center}
    \begin{tikzpicture}
        \foreach \a [count=\x] in {a,b,c}
            \node (\a) at (\x,0) {$\a$};
        \foreach \a/\b in {a/b, b/a, b/c, c/b}
            \draw[bend left, ->] (\a) to (\b);
    \end{tikzpicture}
    \end{center}
    and set $S_a = S_b = S_c = \N$. Denoting by $X$ the resulting $\S$-graph shift, we note that $q_1(X) = 3$ and $q_2(X) = 4$. Any ordered $\S$-limited shift $Y$ with $q_1(Y) = 3$ has at least 3 letters, while any ordered $\S$-limited shift with $q_2(Y) > 0$ has at most 2 letters. Moreover, if $Z$ is an unordered $\S$-limited shift in which exactly $k$ associated sets contain 1 as an element, then $q_2(Z) = 2\binom{k}{2}$. This means that $X$ is not conjugate to any $\S$-limited shift.
\end{example}

A \textit{walk} on a set-equipped graph $(G,\mathcal S)$ is composed of a walk $W = (v_1,\dots,v_n)$ on $G$ together with a selection $(s_1,\dots,s_n)$ of an element $s_i \in S_{v_i}$ at each vertex. The walk is called \textit{closed} if $(v_n,v_1) \in E(G)$. The \textit{length} of the walk $W$ is $\ell(W) = n$, and its \textit{size}, denoted $\lvert W\rvert$, is the sum of the selected items; that is, $\lvert W\rvert = \sum_{i=1}^n s_i$.

We say that a word $\alpha \in \mathcal L\big(X(G,\mathcal S)\big)$ is comprised of \textit{full blocks} if $\alpha = u_1^{s_1}\cdots u_k^{s_k}$ for some $u_i \in V(G)$ with $s_i \in S_{u_i}$.

\begin{remark}
    Though they are equivalent, edge shifts and edge-labelled graphs are more common in the symbolic dynamics literature than vertex shifts and vertex-labelled graphs. This paper uses $\S$-graph shifts defined on vertices because unordered $\S$-limited shifts are most easily realized in this setting. However, all of the theory in this paper can be applied equally to $\S$-graph shifts arising from edges. (Indeed, the line graph of an edge-labelled graph is a vertex-labelled graph with the same associated $\S$-graph shift, to which the results of this paper may be applied.)
\end{remark}

\section{Dynamical properties}\label{sec:dynamical-properties}

This section provides conditions on a set-equipped graph that are necessary and sufficient to guarantee certain dynamical properties in the associated $\S$-graph shift. The proofs of these statements are somewhat technical and share no methods with the proofs of the main results in this paper. For this reason, the proofs have been consigned to an appendix, \cref{sec:appendix}. However, we will need the results in later sections, so this section states the theorems themselves.

Since the $S$-graph shift associated to a set-equipped graph remains unchanged after removing any nonessential vertices, we assume in this section and throughout the paper that every graph is essential. We first provide necessary and sufficient conditions for an $\mathcal{S}$-graph shift to be an SFT or sofic.

\begin{proposition}\label{thm:SFT}
The shift space $X(G,\mathcal{S})$ is of finite type if and only if every $S_v \in \mathcal S$ is either finite or cofinite.
\end{proposition}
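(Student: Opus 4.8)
The plan is to prove both directions by constructing (or failing to construct) a finite set $\mathcal F$ of forbidden words whose avoidance defines $X(G,\mathcal S)$. The key observation throughout is that a point of $X(G,\mathcal S)$ is a concatenation of blocks $v^s$ with $v \in V(G)$, $s \in S_v$, subject to the edge constraint: consecutive distinct-letter runs $v^s w^t$ require $(v,w) \in E(G)$. So the "local" structure to be controlled by forbidden words is (i) which run-lengths of a single letter $v$ are permitted, and (ii) which letters may follow which.

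For the $(\Leftarrow)$ direction, suppose each $S_v$ is finite or cofinite. I would build a finite forbidden set as follows. First, a word $vw$ with $v \neq w$ and $(v,w) \notin E(G)$ is forbidden; there are finitely many such two-letter words. Second, for each vertex $v$, I must forbid exactly the runs $v^k$ whose length $k$ does not arise as (a piece of) some $s \in S_v$. The subtlety is that a maximal run $v^k$ appearing in a point must have $k \in S_v$, \emph{except} that the run may be truncated at the ends of a bi-infinite sequence (the closure allows one-sided infinite or boundary runs). The standard device is that $X(G,\mathcal S)$ is an SFT iff the associated "run-length" constraint on each single letter is itself of finite type as a one-letter condition. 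If $S_v$ is finite, say $S_v \subseteq \{1,\dots,M\}$, then forbid $v^{M+1}$ (too long) together with the finitely many intermediate runs $v^k$ with $k \notin S_v$ that cannot be extended to a legal longer run; finitely many words suffice. If $S_v$ is cofinite, say $S_v \supseteq \{N, N+1, \dots\}$, then every sufficiently long run is automatically legal, the only constraints are on short runs, and again finitely many forbidden words of length $\le N+1$ capture these. Taking the union over all (finitely many) vertices of these finitely many words gives a finite $\mathcal F$ with $X_\mathcal F = X(G,\mathcal S)$, which I would verify via \cref{thm:language-equality} by checking the languages coincide.

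For the $(\Rightarrow)$ direction, I would prove the contrapositive: if some $S_v$ is neither finite nor cofinite, then $X(G,\mathcal S)$ is not an SFT. The idea mirrors the even-shift argument in the text. Since $S_v$ is infinite but its complement in $\N$ is also infinite, there are arbitrarily long gaps in $S_v$: one can find $a \in S_v$ and $b \notin S_v$ with $b$ as large as desired and $a > b$ (or interleaved appropriately), so that arbitrarily long runs $v^k$ are \emph{sometimes} forbidden and \emph{sometimes} allowed as maximal runs. Concretely, suppose for contradiction that $X(G,\mathcal S) = X_\mathcal F$ for a finite $\mathcal F$, and let $M$ exceed the length of every word in $\mathcal F$. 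Because $v$ is essential, pick an in-neighbor $u$ and out-neighbor $w$ (using distinct letters, which exist since the graph is simple with no loops, so any neighbor differs from $v$), giving legal context $u \cdots v^s \cdots w$ for $s \in S_v$. Choose $s \in S_v$ with $s > M$ and choose $s' \notin S_v$ with $s' > M$; the word $u v^{s'} w$ is not in the language of $X(G,\mathcal S)$, yet every subword of it of length $\le M$ also appears as a subword of the legal word $u v^{s} w$ (since long runs of $v$ look identical up to length $M$), so no word of $\mathcal F$ can appear in $u v^{s'} w$. Hence $u v^{s'} w \in X_\mathcal F \setminus \mathcal L(X(G,\mathcal S))$, contradicting $X_\mathcal F = X(G,\mathcal S)$.

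The main obstacle I anticipate is the careful bookkeeping in the $(\Leftarrow)$ direction around \emph{boundary runs} created by taking the closure: when $S_v$ is infinite (cofinite), the closure admits points that begin or end with an infinite run of $v$, and one must ensure the finitely many forbidden words neither rule these out nor accidentally admit illegal finite maximal runs. The cleanest way to handle this is to phrase the single-letter condition as "the set of legal maximal runs of $v$," observe that a subset of $\N$ defines a finite-type run constraint precisely when it is finite or cofinite (this is the heart of the equivalence), and then assemble the edge constraints and per-vertex run constraints into one finite $\mathcal F$. I would isolate the run-constraint equivalence as the crux and treat the edge constraints, which are manifestly finite-type, as routine.
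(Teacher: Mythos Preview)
Your proposal is correct and follows essentially the same approach as the paper: for $(\Leftarrow)$ you build a finite forbidden set from the two-letter non-edge words together with the bad maximal runs $av^kb$ (the paper writes these explicitly, whereas your phrasing ``intermediate runs $v^k$ \dots that cannot be extended'' is a bit loose---you do need the flanking letters $a,b\neq v$ to pin down \emph{maximal} runs, as you yourself note later), and for $(\Rightarrow)$ you run the contrapositive ``window length $M$'' argument that the paper states in one sentence. Your $(\Rightarrow)$ argument is in fact more fully spelled out than the paper's; nothing genuinely different is happening in either direction.
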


With each subset $S\subseteq \N$, we can associate the (finite or infinite) sequence $(s_i)_{i=1}^{|S|}$ that enumerates the elements of $S$ in increasing order. If $S$ is finite with $n$ elements, then we extend $(s_i)_{i=1}^n$ to an infinite sequence by defining $s_i = s_{n}$ for any $i > n$. The \textit{difference sequence} of a set $S \subseteq \N$, denoted $\Delta(S)$, is the sequence of differences between consecutive members of $S$; that is, $\Delta(S) = (d_i)_{i=1}^\infty$ where $d_i = s_{i+1} - s_i$.

\begin{proposition}\label{thm:sofic}
The shift space $X(G,\mathcal{S})$ is sofic if and only if $\Delta(S_v)$ is eventually periodic for every $v \in V(G)$.
\end{proposition}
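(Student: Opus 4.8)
The plan is to invoke \cref{thm:sofic-follower-sets}, which characterizes soficity by the finiteness of the collection of follower sets, and then show that $X = X(G,\mathcal S)$ has finitely many distinct follower sets if and only if $\Delta(S_i)$ is eventually periodic for every vertex $i$. The guiding principle is that a point of an $\mathcal{S}$-graph shift is a concatenation of blocks $u^s$ (with $s \in S_u$, consecutive symbols distinct and adjacent in $G$), so the only information relevant to continuing a word is the current symbol $i$ together with the length $k$ of the run of $i$'s currently in progress; once a run has been closed off, all earlier structure becomes irrelevant. The follower set of a word should therefore be governed by a small amount of local data.

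First I would make this precise by classifying follower sets. Every nonempty $\alpha \in \mathcal L(X)$ ends in a maximal block $i^k$ with $k \ge 1$, and I split into two cases. If $\alpha = i^k$ (a \emph{pure} word), the run of $i$'s may have begun arbitrarily far to the left, so the only constraint on a continuation is that the visible run can still be completed to a valid length; this makes $F_X(\alpha)$ depend only on $i$ and on whether $S_i$ is bounded (and, if so, on $\max S_i - k$), and since a bounded $S_i$ forces $k \le \max S_i$, pure words contribute only finitely many follower sets in every case. If instead a symbol other than $i$ precedes the final block (a \emph{boundary} word), the run of $i$'s has length exactly $k$, and a continuation $i^m\gamma$ is admissible exactly when either $\gamma$ is empty and the run can still be extended, or $\gamma$ begins at an out-neighbor $j$ of $i$ with $k + m \in S_i$. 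Both conditions are governed by the \emph{tail}
\[
    T_k^{(i)} = \{\, s - k : s \in S_i,\ s \ge k \,\},
\]
so $F_X(\alpha)$ is a function of the pair $(i, T_k^{(i)})$. Conversely, since $G$ is essential, $i$ has an out-neighbor $j$, and the set $\{m : i^m j \in F_X(\alpha)\}$ recovers $T_k^{(i)}$; hence, for fixed $i$, distinct tails yield distinct follower sets. Combining the two cases, $X$ has finitely many follower sets if and only if each vertex $i$ admits only finitely many distinct tails $T_k^{(i)}$.

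It then remains to prove the purely combinatorial claim that, for $S \subseteq \N$ with elements $s_1 < s_2 < \cdots$, the family $\{T_k : k \ge 1\}$ is finite if and only if $\Delta(S)$ is eventually periodic. The key observation is that each tail is determined by two pieces of data: its smallest element, the \emph{offset} $s_j - k$ where $s_j$ is the least element of $S$ that is at least $k$, and the sequence of gaps between its successive elements, which is exactly the tail $(d_j, d_{j+1}, \dots)$ of $\Delta(S)$ starting at index $j$. If $\Delta(S)$ is eventually periodic, then its gaps are bounded and its tails take finitely many values, bounding both coordinates and forcing finitely many $T_k$. Conversely, as $k$ ranges over $\N$ the index $j$ takes every positive value (take $k = s_j$), so finiteness of $\{T_k\}$ forces the tails of $\Delta(S)$ to repeat; if $\tau_a = \tau_b$ with $a < b$, then $d_n = d_{n + (b-a)}$ for all $n \ge a$, which is eventual periodicity. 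Assembling the two steps gives the proposition.

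I expect the main obstacle to be the follower-set analysis of the first step: one must verify carefully that the follower set of a boundary word depends on nothing beyond the pair $(i, T_k^{(i)})$ — in particular that the continuations available after the current run is closed off are ``fresh'' and contribute identically regardless of what preceded the run — and must account for the closure operation, which for unbounded $S_i$ admits runs of every length and so feeds the pure-word case. By contrast, the combinatorial equivalence in the second step is routine once the offset/gap decomposition of the tails is in hand.
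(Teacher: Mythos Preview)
Your proposal is correct and follows essentially the same route as the paper: both invoke \cref{thm:sofic-follower-sets}, reduce every follower set to one of the form $F_X(b^k)$ or $F_X(ab^k)$ by observing that everything preceding the last completed block is irrelevant, and then argue that the latter family is finite precisely when each $\Delta(S_i)$ is eventually periodic. Your tail formalism $T_k^{(i)}$ and the offset/gap decomposition make explicit the step the paper compresses into the single line ``$F(ab^m)=F(ab^n)$ implies $\Delta(S_b)$ eventually has period $n-m$,'' but the underlying argument is the same.
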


We next turn to the dynamical properties of mixing and (weak) specification.

\begin{definition}\label{def:mixing}
A shift space $X$ is called \textit{mixing} if for every pair of words $\alpha,\beta \in \mathcal L(X)$ the following is true: There exists a positive integer $N$ such that for every $n \geq N$ there is a word $\gamma \in \mathcal B_n(X)$ such that $\alpha\gamma\beta \in \mathcal L(X)$.
\end{definition}

\begin{proposition}\label{thm:mixing}
The shift space $X(G,\mathcal{S})$ is mixing if and only if $G$ is irreducible and \\ $\gcd\!\left\{\sum_{v\in C} s_v\, :\, \text{$C$ is a cycle in $G$ and $s_v \in S_v$}\right\}=1$.
\end{proposition}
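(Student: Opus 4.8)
The plan is to prove both implications through a single structural device: the integer $d := \gcd\{\sum_{i \in C} s_i : C \text{ a cycle}, s_i \in S_i\}$ together with an associated phase function valued in $\Z/d\Z$. The forward direction will show that mixing forces $G$ to be irreducible and $d = 1$, while the reverse direction will use irreducibility together with a numerical-semigroup argument to realize all sufficiently large gap lengths.

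For the forward direction, irreducibility is immediate: if some ordered pair $u \neq v$ admits no directed path $u \to v$, then taking $\alpha = u$ and $\beta = v$ (both single-letter words, which lie in $\mathcal L(X)$ since $G$ is essential), no word $u\gamma v$ can appear, as it would trace a walk from $u$ to $v$; this violates mixing for every length. To handle the gcd, I would first observe that every element of a fixed $S_v$ is congruent modulo $d$: since $G$ is now irreducible, $v$ lies on a cycle, and replacing the selection at $v$ changes that cycle's size by the difference of two elements of $S_v$, with both sizes divisible by $d$. Writing $r_v$ for this common residue, I would define a height function $h \colon V(G) \to \Z/d\Z$ by the edge relations $h(w) = h(v) + r_v$ for $(v,w) \in E(G)$; consistency around each cycle holds because $\sum_{i \in C} r_{v_i}$ equals the cycle size modulo $d$ and hence vanishes, and irreducibility propagates $h$ to all of $V(G)$ up to an additive constant. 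This $h$ assigns to every position of a word containing at least one transition an intrinsic phase in $\Z/d\Z$ that increases by $1$ with each coordinate. Choosing $\alpha = \beta$ to be any word with a transition (e.g.\ $u^{s}w^{s'}$ along an edge), the intrinsic phase of the first letter of $\beta$ must match the phase predicted by reading across $\alpha\gamma\beta$, which forces $|\gamma| \equiv -|\alpha| \pmod d$ for every admissible $\gamma$. If $d \geq 2$ this excludes an entire residue class of lengths, contradicting mixing; hence $d = 1$.

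For the reverse direction, assume $G$ irreducible and $d = 1$, and fix $\alpha, \beta \in \mathcal L(X)$ ending in a vertex $a$ and beginning in a vertex $b$, respectively. The goal reduces to showing that the set of gap lengths $\{n : \alpha\gamma\beta \in \mathcal L(X) \text{ for some } \gamma \in \mathcal B_n(X)\}$ is cofinite. I would select finitely many cycles $C_1, \dots, C_m$ with selections whose sizes $c_1, \dots, c_m$ satisfy $\gcd(c_1,\dots,c_m) = 1$ — possible since the full set of cycle sizes has gcd $1$ — and, using irreducibility, build a single walk from $a$ to $b$ passing through a vertex of each $C_i$. The connecting word $\gamma$ then completes the final $a$-block of $\alpha$, follows this walk with fixed admissible block sizes (optionally looping $k_i$ times around $C_i$, which adds $k_i c_i$ to the length), and merges into the initial $b$-block of $\beta$. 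This realizes every length in $M_0 + \langle c_1, \dots, c_m\rangle$ for a fixed base length $M_0$; since the numerical semigroup generated by integers of gcd $1$ contains every sufficiently large integer (the Chicken McNugget / Frobenius bound), all lengths beyond some $N$ are attained, which is exactly mixing.

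I expect the main obstacle to be the forward direction's phase machinery: verifying that $h$ is well-defined (the congruence of $S_v$ modulo $d$ and the cycle-consistency of the edge relations are precisely what make the varying block sizes harmless) and that the resulting phase of a word is genuinely intrinsic — independent of which transition anchors it and of the ambient point. The reverse direction is more routine, its only delicate points being the boundary bookkeeping when $\alpha$ or $\beta$ ends or starts mid-block and the degenerate cases $a = b$ or $\beta$ a single block, all of which affect only the constant $M_0$ and not the cofiniteness conclusion.
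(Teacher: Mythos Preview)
Your proposal is correct, and the reverse direction matches the paper's argument almost exactly: both of you thread a base walk through representatives of enough cycles to generate a numerical semigroup of full $\gcd$ $1$, then invoke the Frobenius/Chicken McNugget bound to realize all large gap lengths, with the boundary bookkeeping for partial end-blocks affecting only the additive constant.

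The forward direction, however, is genuinely different. The paper argues directly: pick an edge $(a,b)$, use mixing to produce words $b\omega_1 a$ and $b\omega_2 a$ of consecutive lengths $n+2$ and $n+3$, interpret each as a closed walk (since $(a,b)\in E(G)$), decompose into cycles, and conclude that two consecutive integers lie in the additive semigroup generated by $T$, forcing $\gcd T=1$. Your route is more structural: you first extract from the definition of $d$ that each $S_v$ lies in a single residue class $r_v$ modulo $d$, build the potential $h\colon V(G)\to\Z/d\Z$ satisfying $h(w)-h(v)\equiv r_v$ along edges, and then derive an intrinsic $\Z/d\Z$-valued phase on the positions of any word containing a transition. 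This yields the rigidity $|\gamma|\equiv -|\alpha|\pmod d$ for every admissible connector, which immediately contradicts mixing when $d\ge 2$. Your argument is longer but it buys something real: the paper's step asserting that $b\omega_i a$ decomposes as $b^{k_0}c_1^{k_1}\cdots a^{k_{m+1}}$ with \emph{all} $k_i\in S_{c_i}$ glosses over the fact that the first $b$-block and last $a$-block may be partial (proper prefixes/suffixes of full blocks in the ambient point), so $n+2$ and $n+3$ are not obviously sums of elements of $T$. Your phase function sidesteps this entirely, since it is anchored by any internal transition and is indifferent to whether the outermost blocks are complete.
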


\begin{definition}\label{def:specification}
A shift space $X$ has the \textit{specification property} if there exists a positive integer $N$ such that for every pair of words $\alpha,\beta \in \mathcal L(X)$ there is a $\gamma \in \mathcal B_N(X)$ with $\alpha\gamma\beta \in \mathcal L(X)$. The shift space $X$ has the \textit{weak specification property} if there exists a positive integer $N$ such that for every pair of words $\alpha, \beta \in \mathcal L(X)$ there is a word $\gamma \in \mathcal L(X)$ with $\lvert \gamma\rvert \leq N$ and $\alpha \gamma \beta \in \mathcal L(X)$.
\end{definition}

\begin{proposition}\label{thm:almost_spec}
The shift space $X(G,\mathcal{S})$ has the weak specification property if and only if $G$ is irreducible and $\sup \Delta(S_v) < \infty$ for each $v \in V(G)$.
\end{proposition}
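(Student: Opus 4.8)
The plan is to prove both implications, following the broad strategy of the proof of \cref{thm:mixing}: irreducibility supplies a connecting walk in $G$, while the arithmetic condition (here, bounded gaps in place of $\gcd = 1$) governs how block sizes may be completed. I would first record the block structure of a word $w \in \mathcal L(X)$: writing $w = c_0^{j_0}c_1^{j_1}\cdots c_m^{j_m}$ as its maximal runs, every \emph{interior} run must satisfy $j_t \in S_{c_t}$, whereas the two boundary runs need only satisfy $j_0 \le \sup S_{c_0}$ and $j_m \le \sup S_{c_m}$, and consecutive letters must be adjacent in $G$. This dictionary between words and walks is what both directions exploit.

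For necessity I would treat the two conditions separately. Irreducibility follows exactly as in \cref{thm:mixing}: if there is no directed path from $u$ to $v$, then no word of the form $u\gamma v$ lies in $\mathcal L(X)$ (its letters would trace a $u$-$v$ walk), so $\alpha = u$ and $\beta = v$ cannot be joined by any $\gamma$, violating weak specification. For bounded gaps I would argue by contradiction: suppose $\sup\Delta(S_i) = \infty$ and let $N$ be a candidate specification constant. Choose consecutive elements $s < s'$ of $S_i$ with $s' - s > N+1$, and let $a, b$ be an in- and an out-neighbor of $i$ (these exist since $G$ is essential). Set $\alpha = a\,i^{s+1}$ and $\beta = b$; both lie in $\mathcal L(X)$ because $s+1 < s'$, so $i^{s+1}$ is a legal boundary run. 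The crucial point is that $s+1 \notin S_i$: in $\alpha\gamma\beta$ the run of $i$'s beginning inside $\alpha$ is interior, so it can only close at a size in $S_i$; since that size exceeds $s$ it must be at least $s'$, forcing at least $s' - s - 1 > N$ additional copies of $i$ and hence $|\gamma| > N$. As $N$ was arbitrary, weak specification fails.

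For sufficiency, assume $G$ is irreducible and set $M = \max_i \sup\Delta(S_i) < \infty$. Given $\alpha,\beta \in \mathcal L(X)$ with trailing run $u^k$ and leading run $v^l$, I would build $\gamma$ in three pieces: (i) append $u^{k'-k}$ to complete $\alpha$'s final run to a size $k' \in S_u$ with $k'-k$ bounded; (ii) insert full blocks along a shortest $u$-to-$v$ walk supplied by irreducibility, routing around a cycle through $u$ when $u = v$; and (iii) prepend $v^{l'-l}$ to $\beta$'s initial run so that it merges into a legal interior run $v^{l'} \in S_v$. The bounded-gap hypothesis is exactly what guarantees that the completions in (i) and (iii) cost at most $M$ symbols each (plus a one-time $\min S_i$ correction), so that $|\gamma|$ is bounded by a constant $N$ depending only on $(G,\mathcal S)$ and independent of $\alpha,\beta$.

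The main obstacle is the sufficiency construction at the $\beta$ end: the first run $v^l$ of $\beta$ need not have length in $S_v$, so one cannot simply abut a connecting walk onto $\beta$, as that would create a forbidden interior run. The fix—merging trailing copies of $v$ from $\gamma$ with $\beta$'s initial run to reach a valid size $l' \in S_v$, which bounded gaps make cheap—is the technical heart of the argument, and its failure when gaps are unbounded is precisely what drives the necessity construction.
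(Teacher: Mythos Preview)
Your proposal is correct and follows essentially the same strategy as the paper: irreducibility supplies connecting walks, and the bounded-gap condition lets you complete boundary runs cheaply on both ends. Your necessity argument is in fact slightly sharper---prefixing the in-neighbor $a$ forces the problematic $i$-run to be genuinely interior (hence constrained to lie in $S_i$), whereas the paper's bare choice $\alpha = a^{n_k+1}$ leaves that run as a left-boundary block---and your mention of the $\min S_i$ correction in the sufficiency bound is a precision the paper glosses over.
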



\begin{proposition}\label{thm:specification}
An $\mathcal{S}$-graph shift has the specification property if and only if it is mixing and has the weak specification property.
\end{proposition}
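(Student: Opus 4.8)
The plan is to combine the characterizations in \cref{thm:mixing} and \cref{thm:almost_spec} with a matching characterization of specification. Writing $g$ for the greatest common divisor of the cycle-sizes $\sum_{i\in C}s_i$ appearing in \cref{thm:mixing}, those two results show that $X(G,\mathcal S)$ is simultaneously mixing and weakly specified exactly when (i) $G$ is irreducible, (ii) $g=1$, and (iii) each $S_v$ is syndetic. So it suffices to prove that $X(G,\mathcal S)$ has the specification property if and only if (i)--(iii) hold; the proposition then follows at once.

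For the forward direction, note first that specification trivially implies weak specification (a connector of length exactly $N$ has length at most $N$), so \cref{thm:almost_spec} already yields (i) and (iii), and it remains to deduce (ii). Suppose instead that $g\geq2$. Because every vertex lies on a cycle, $g$ divides the difference of any two elements of a fixed $S_v$, so each $S_v$ lies in a single residue class $r_v$ modulo $g$; and because the size of every closed walk is a sum of cycle-sizes, one checks that there is a well-defined phase $\psi\colon V(G)\to\Z/g\Z$ for which every block of $v$ in every point of $X$ begins at a position $\equiv\psi(v)\pmod g$. I would then fix an edge $(c,a)$ and words $\alpha=c^{s_c}a^{k}$ and $\beta=b^{s_b}e^{s_e}$ (with $(b,e)\in E(G)$) whose internal block boundaries pin down the phases of their endpoints; a short computation, robust to the end blocks merging across the junctions, shows that every connector $\gamma$ with $\alpha\gamma\beta\in\mathcal L(X)$ satisfies $|\gamma|\equiv\psi(b)-\psi(a)-k\pmod g$. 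Since $g\geq2$, I can force two incompatible residues: either by varying $b$ when $\psi$ is nonconstant, or, when $\psi$ is constant, by increasing $k$ by one at a vertex admitting a block of size $\geq2$ (such a vertex exists, since otherwise all $S_v=\{1\}$ and $\psi$ would increase along every edge). No single $N$ can meet both, contradicting specification.

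For the converse I assume (i)--(iii) and build a uniform gap length. Let $d=\max_v\sup\Delta(S_v)$, finite by (iii). For each ordered pair $(a,b)$ of vertices, irreducibility provides a full-block ``bridge'' walk from $a$ to $b$; routing it through a fixed family of cycles whose sizes have gcd $1$ (available by (ii)) and inserting copies of those cycles adjusts its size by any sufficiently large integer, so bridges $W_{a,b}$ realizing a transition $a\to\cdots\to b$ exist for every length beyond some bound $B_{a,b}$. Set $N=\max_{a,b}B_{a,b}+2d$. Given $\alpha,\beta\in\mathcal L(X)$ ending in $a^{k_1}$ and beginning with $b^{k_2}$, syndeticity lets me complete these to full blocks $a^{s_1}$ and $b^{s_2}$ with $s_1-k_1,\ s_2-k_2\in[0,d]$; inserting a bridge of length $N-(s_1-k_1)-(s_2-k_2)\geq N-2d\geq B_{a,b}$ yields a connector $\gamma=a^{s_1-k_1}W_{a,b}b^{s_2-k_2}$ of length exactly $N$ with $\alpha\gamma\beta\in\mathcal L(X)$. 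As $N$ is independent of $\alpha,\beta$, this is the specification property.

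I expect the main obstacle to be the forward implication $g=1$: setting up the $\Z/g\Z$-grading and, above all, verifying that the residue $|\gamma|\bmod g$ is forced even when the end blocks of $\alpha$ and $\beta$ merge into the connector requires care, since the merged lengths must themselves be shown divisible by $g$. By contrast, the backward construction is a more routine elaboration of the proof of \cref{thm:almost_spec}, the one new ingredient being the cycle-insertion step that converts the gcd condition (ii) into exact control of the connector length.
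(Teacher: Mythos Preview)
Your proposal is essentially correct but takes a considerably harder road than the paper, especially for the forward implication. You reduce to conditions (i)--(iii) and argue that specification forces $g=1$ via a $\Z/g\Z$-grading on block-start positions. This can be made to work (your formula $|\gamma|\equiv\psi(b)-\psi(a)-k\pmod g$ is right once one notes $\psi(e)-s_b\equiv\psi(b)$), but all the bookkeeping you flag as the ``main obstacle'' is avoidable. The paper instead proves that specification implies mixing for \emph{any} shift space, directly from the definitions: given $\alpha,\beta\in\mathcal L(X)$ and $n\geq N$, extend $\alpha$ to $\alpha\tau$ with $|\tau|=n-N$ (possible since $\alpha$ appears in some point of $X$), then apply specification to $\alpha\tau$ and $\beta$ to produce a length-$n$ connector $\tau\gamma$. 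No graph structure, no grading, no merging analysis.

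Your converse is close to the paper's, except that you rebuild the ``bridges of every large length'' from the gcd condition by cycle insertion, whereas the paper simply invokes mixing itself: for each pair $(a,b)$ there is $N(a,b)$ such that for all $n\geq N(a,b)$ some $\tau\in\mathcal B_n(X)$ satisfies $a\tau b\in\mathcal L(X)$ with $\tau$ not beginning in $a$ nor ending in $b$. Both arguments then finish identically by padding the partial end-blocks using syndeticity. So routing through the characterizations (i)--(iii) is unnecessary on both ends; working directly with the abstract mixing and weak-specification properties yields a shorter proof and sidesteps the delicate case analysis you anticipate.
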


\section{Proof of the entropy formula}\label{sec:entropy-formula}

The goal of this section is to prove \cref{thm:entropy_eigen=1}. As in the previous section, any graph in this section is essential. Moreover, we only consider nonempty $\mathcal{S}$-graph shifts, so every graph contains at least one cycle. For convenience, we set $V(G) = \{1,2,\dots,q\}$.

Spandl's theorem for the entropy of an $S$-gap shift associates the entropy of an $S$-gap shift to the roots of a power series:

\begin{theorem}\label{thm:S-gap-entropy}
    If $S \subseteq \N$ is nonempty and $x = \lambda$ is the unique positive solution to\vspace{-0.3ex}
    \[ \sum_{s \in S} x^{s+1} = 1,\vspace{-0.5ex} \]
    then the entropy of $X(S)$ is $\log \lambda^{-1}$.
\end{theorem}

The way we will prove \cref{thm:entropy_eigen=1} is similar in spirit, by relating the entropy of an $\S$-graph shift to the radius of convergence of a certain power series. By transitioning through several power series that have a stronger connection to the graph structure of the subshift and the same radius of convergence as the original series, we eventually transition to a purely linear algebraic statement that relates the entropy to the spectral radius of a matrix.

To do this, we will require a handful of combinatorial structures. Recall that the adjacency matrix of a graph $G$ is the $q \times q$ square matrix $A_G$ with $(A_G)_{i,j} = 1$ if $(i,j) \in E(G)$ and $(A_G)_{i,j} = 0$ otherwise.

\begin{definition}\label{def:generating-matrix}
Let $\mathcal G = (G,\mathcal S)$ be a set-equipped graph on the vertex set $\{1,2,\dots,q\}$. To each vertex $v \in V(G)$ we assign the generating function $H_v(x) = \sum_{s \in S_v} x^s$. The \textit{generating matrix} of $\mathcal G$ is the $q \times q$ matrix function $B_\mathcal G(x)$ with entries
\[  B_\mathcal G(x)_{i,j} = \begin{cases}
        H_i(x)  &\text{if } (i,j) \in E(G)\\
        0       &\text{otherwise.}
    \end{cases}\]
The \textit{diagonal generating matrix} is $D_\mathcal G(x) = \diag\!\big(H_1(x),\dots,H_n(x)\big)$. (So $B_\mathcal G(x) = D_\mathcal G(x)A_G$.)
\end{definition}

The next proposition provides the essential link between the graph representation of an $\mathcal{S}$-graph shift and its entropy. If $(G,\mathcal{S})$ is a set-equipped graph and $w = (v_1,\dots,v_n)$ is a walk on $G$, we let $H_w(x)$ denote the power series $\prod_{i=1}^n H_{v_i}(x)$. Recall that $W(G)$ denotes the set of all walks on $G$.

\begin{proposition}\label{thm:sum_over_walks}
Let $X=X(G,\mathcal{S})$ be an $\mathcal{S}$-graph shift. If $\lambda$ is the radius of convergence of the generating function $\sum_{w\in W(G)} H_w(x)$, then the entropy of $X$ is $\log \lambda^{-1}$.
\end{proposition}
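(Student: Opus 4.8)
The plan is to compare the coefficient sequence of $\sum_{w\in W(G)}H_w(x)$ directly with the word counts $\lvert\mathcal B_n(X)\rvert$. Writing $\sum_{w\in W(G)}H_w(x)=\sum_{\ell\ge0}c_\ell x^\ell$, the definition of $H_w$ shows that $c_\ell$ is the number of walks on $(G,\mathcal S)$ of size $\ell$ (walk together with a selection). Because $G$ has no loops, consecutive runs in the associated word carry different letters, so the decomposition of a word into maximal runs is unique; hence $c_\ell$ equals the number of allowable words of length $\ell$ of the form $v_1^{s_1}\cdots v_k^{s_k}$ with each block complete ($s_i\in S_{v_i}$). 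Call these the \emph{full-block words}. By the Cauchy--Hadamard formula $\log\lambda^{-1}=\limsup_{\ell\to\infty}\tfrac1\ell\log c_\ell$, so it suffices to show this limit superior equals $h(X)=\lim_n\tfrac1n\log\lvert\mathcal B_n(X)\rvert$.

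For the lower bound I would observe that every full-block word is allowable: since $G$ is essential and each $S_i$ is nonempty, any walk-with-selection extends to a bi-infinite point of $X$ by repeatedly attaching blocks along in- and out-edges. Thus $c_\ell\le\lvert\mathcal B_\ell(X)\rvert$, and applying $\limsup_\ell\tfrac1\ell\log(\cdot)$ yields $\log\lambda^{-1}\le h(X)$. The same extension shows $c_\ell\ge1$ for infinitely many $\ell$, whence $\limsup_\ell c_\ell^{1/\ell}\ge1$ and therefore $\lambda\le1$; this will be used in the final estimate.

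The reverse inequality is the crux. Each word of $\mathcal B_n(X)$ with at least two runs factors uniquely as $v_0^{a_0}\,\omega\,v_{k+1}^{a_{k+1}}$, where $v_0^{a_0}$ and $v_{k+1}^{a_{k+1}}$ are its first and last runs and the middle $\omega=v_1^{s_1}\cdots v_k^{s_k}$ consists only of \emph{complete} blocks, so it is a full-block word of some length $\ell\le n$. The natural temptation is to \emph{complete} the two end runs into full blocks, thereby mapping each word to a genuine full-block word; I would deliberately avoid this, since whenever $\sup\Delta(S_i)=\infty$ a single completion can lengthen the word by an unbounded amount, and the resulting bound summed over $m\ge n$ need not even converge. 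Peeling the partial ends off instead keeps the middle length at most $n$. For a fixed middle $\omega$ the two end runs are specified by a choice of $v_0,v_{k+1}$ with $(v_0,v_1),(v_k,v_{k+1})\in E(G)$ and by lengths $a_0,a_{k+1}\ge1$ summing to $n-\ell$, giving at most $p^2(n-\ell+1)$ possibilities, where $p=\lvert V(G)\rvert$; the single-run words cost a further additive $p$. With the convention $c_0:=1$ this yields
\[
    \lvert\mathcal B_n(X)\rvert\ \le\ p+p^2\sum_{\ell=0}^{n}c_\ell\,(n-\ell+1).
\]
I expect this step---setting up the factorization and bounding the end attachments without letting the completion blow up---to be the main obstacle.

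To finish, fix $\epsilon>0$ and choose $C_\epsilon\ge1$ with $c_\ell\le C_\epsilon(\lambda^{-1}+\epsilon)^\ell$ for all $\ell$. Since $\lambda\le1$ we have $r:=\lambda^{-1}+\epsilon>1$, and a routine geometric estimate bounds the displayed sum by $C'_\epsilon(n+1)^2 r^{n}$. Hence $\tfrac1n\log\lvert\mathcal B_n(X)\rvert\to\log r$, so $h(X)\le\log(\lambda^{-1}+\epsilon)$ for every $\epsilon>0$; letting $\epsilon\to0$ gives $h(X)\le\log\lambda^{-1}$. Combined with the lower bound, $h(X)=\log\lambda^{-1}$.
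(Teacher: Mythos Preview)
Your argument is correct and follows the same idea as the paper's proof: both decompose an allowable word as (partial first run)(full-block middle)(partial last run) and observe that the two incomplete ends contribute only a polynomial factor. The paper packages this more compactly as the generating-function inequality
\[
    \sum_{w\in W} H_w(x)\ \le\ f(x)\ \le\ \Big(\frac{\lvert\mathcal A\rvert}{1-x}\Big)^{2}\sum_{w\in W} H_w(x)
    \qquad (x\ge 0),
\]
where $f(x)=\sum_n\lvert\mathcal B_n(X)\rvert x^n$, and reads off equality of radii of convergence directly; this avoids the explicit Cauchy--Hadamard step and the $\epsilon$-argument, but is otherwise the same computation as your coefficient bound.
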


Though the index set for this power series might seem somewhat out of control, it is indeed well-defined. If $w$ is a walk with $n$ vertices, then each term in $H_w(x)$ is divisible by $x^n$. In other words, the coefficient of $x^n$ in the power series $\sum_{w\in W(G)} H_w(x)$ depends only on the walks with $n$ or fewer vertices, of which there are finitely many. In fact, the coefficient of $x^k$ in $\sum_{w\in W(G)} H_w(x)$ is the number of walks $w$ whose size is $|w| = k$. One can show that the number of such walks is at most $q^k$, which implies that the radius of convergence of $\sum_{w\in W(G)} H_w(x)$ is at least $1/q$. Not coincidentally, this exactly coincides with the previous observation that the entropy of any shift space with $q$ letters is at most $\log q$.

\begin{proof}[Proof of \cref{thm:sum_over_walks}]
The power series $f(x) = \sum_{n=1}^{\infty}\mathcal{B}_n(X) x^n$ has radius of convergence $e^{-h(X)}$. In the rest of the proof, we show that the radius of convergence of $f(x)$ is $\lambda$, which implies that $h(X)=\log\lambda^{-1}$.

Since every coefficient in both series is positive and the series are centered at 0, the radius of convergence can be determined by examining convergence only at nonnegative real numbers. Let $W = W(G)$. The coefficient of $x^n$ in $\sum_{w\in W} H_w(x)$ is the number of words of the form $u_1^{s_1}\cdots u_j^{s_j}$ where $s_i \in S_{u_i}$, each $(u_i,u_{i+1}) \in E(G)$, and $s_1 + \cdots + s_j = n$. The number of such words is at most $\mathcal B_n(X)$, so
\[ \sum_{w\in W} H_w(x) \leq f(x) \]
for every $x \geq 0$. Every allowable word has the form $a^{k_1}\omega b^{k_2}$, where $\omega$ is composed of concatenated full blocks, $a,b \in V(G)$, and $k_1,k_2 \in \mathbb{N}$. The number of such words (ignoring for the moment whether they are in $\mathcal L(X)$) has generating function
\[ \bigg(\frac{q}{1-x}\bigg) \bigg(\sum_{w\in W} H_w(x)\bigg) \bigg(\frac{q}{1-x}\bigg). \]
Each coefficient of this power series is at least as large as the corresponding coefficient of $f(x)$, so\vspace{-0.5ex}
\[ \sum_{w\in W} H_w(x) \leq f(x) \leq \left(\frac{q}{1-x}\right)^{\!2}\sum_{w\in W} H_w(x) \]
for every $x \geq 0$. Because $G$ contains at least one cycle, $\sum_{w\in W} H_w(x)$ contains arbitrarily large powers of $x$; all coefficients are nonnegative integers, so the power series diverges at $x=1$. The power series $(1-x)^{-1}$ has a radius of convergence of $1$, so $f(x)$ will converge exactly when $\sum_{w\in W} H_w(x)$ does. Therefore $e^{-h(X)}=\lambda$.
\end{proof}

\cref{thm:sum_over_walks} is enough to calculate the entropy of some simple $\mathcal{S}$-graph shifts, in particular when each set $S_i$ is the same or $G$ is quite small, but the calculation rapidly becomes intractable in any more complicated situation. To tame the sum $\sum_{w \in W(G)} H_w(x)$, we'll need finer control over the vertices in a walk.

\begin{definition}
Let $G$ be a graph with vertices $v_1, \dots, v_q$, and let $A_G$ denote the adjacency matrix of $G$. Considering each of the vertices as indeterminates, we define $D_G = \diag(v_1,\dots,v_q)$. The \textit{symbolic adjacency matrix} of $G$ is $B_G = D_GA_G$.
\end{definition}

We may omit the subscript if only one graph is under discussion. The symbolic adjacency matrix is the labelled adjacency matrix for the graph obtained from $G$ by labelling each edge with the vertex it is directed from.
\begin{example}
    A graph and its symbolic adjacency matrix.
    \begin{center}
        \hspace{0.08\textwidth}
        \begin{minipage}{0.4\textwidth}
        \begin{center}
        \begin{tikzpicture}
            \foreach \a/\b/\c/\d in {0/1/4/d, 90/1/1/a, 180/1/2/b, -90/1/3/c}
                \node (\c) at (\a:\b) {$\d$};
            \foreach \tail/\head in {1/2,2/3,3/4,4/1, 1/3, 4/2}
                \draw[edge] (\tail) to (\head);
        \end{tikzpicture}
        \end{center}
        \end{minipage}
        \begin{minipage}{0.4\textwidth}
            \[ \begin{pmatrix}
            0 & a & a & 0\\
            0 & 0 & b & 0\\
            0 & 0 & 0 & c\\
            d & d & 0 & 0
            \end{pmatrix} \]
        \end{minipage}\qedhere
    \end{center}
\end{example}

\begin{lemma}\label{thm:symbolic_adj}
Let $G$ be a graph with $D_G$ and $B_G$ as above. The number of walks of length $n$ on $G$ which, for each $1 \leq i \leq q$, visit vertex $v_i$ exactly $k_i$ times is the coefficient of $v_1^{k_1}\cdots v_q^{k_q}$ in $\sum_{i,j} (B^n D)_{i,j}$.
\end{lemma}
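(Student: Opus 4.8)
The plan is to read off the entries of the matrix power $B^{n}$ as a walk-generating function and then account for the trailing factor $D$. Write $B = B_G$ and $A = A_G$, and recall that $B_{a,b} = v_a$ when $(a,b) \in E(G)$ and $B_{a,b} = 0$ otherwise, so that each entry of $B$ is weighted by the \emph{source} vertex of the corresponding edge. Expanding the matrix product directly gives
\[
    (B^{n})_{i,j} = \sum_{a_0 = i,\ a_n = j} \;\prod_{\ell=0}^{n-1} B_{a_\ell,\,a_{\ell+1}},
\]
the inner sum ranging over all tuples $(a_0,\dots,a_n)$ with the prescribed endpoints. A term is nonzero exactly when $(a_\ell,a_{\ell+1}) \in E(G)$ for every $\ell$, i.e. when $(a_0,\dots,a_n)$ is a walk on $G$, and in that case the product equals $v_{a_0}v_{a_1}\cdots v_{a_{n-1}}$ --- the product of the vertex indeterminates of every vertex of the walk except the terminal one.

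First I would make this identity rigorous by induction on $n$: the base case $(B)_{i,j} = v_i A_{i,j}$ is immediate, and the inductive step factors the last edge out of each walk. This shows that $(B^{n})_{i,j}$ is precisely the generating function, over walks from $i$ to $j$ on $G$, of the monomial recording all vertices but the last. Next I would fold in the diagonal matrix: since $D$ is diagonal with $D_{j,j} = v_j$, right multiplication yields $(B^{n}D)_{i,j} = v_j\,(B^{n})_{i,j}$, which restores the one missing vertex. Hence $(B^{n}D)_{i,j}$ is the generating function over walks from $i$ to $j$ of the monomial $\prod_\ell v_{a_\ell}$ taken over \emph{all} vertices of the walk, counted with multiplicity.

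Summing over all $i$ and $j$ then lets the endpoints range freely, so $\sum_{i,j}(B^{n}D)_{i,j} = \sum_{W} v_{a_0}v_{a_1}\cdots v_{a_n}$, where $W = (a_0,\dots,a_n)$ runs over all such walks on $G$. For a fixed walk $W$ visiting each vertex $v_i$ exactly $k_i$ times, this monomial collapses to $v_1^{k_1}\cdots v_p^{k_p}$; collecting like terms, the coefficient of $v_1^{k_1}\cdots v_p^{k_p}$ is exactly the number of walks visiting each $v_i$ the prescribed number of times, as claimed.

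The step I expect to require the most care is the asymmetric bookkeeping of the vertex weights. Because the entries of $B$ record the source vertex of each edge, the product $B^{n}$ captures every vertex of a walk except its endpoint, and it is precisely the trailing $D$ that supplies the endpoint so that each visited vertex is weighted once and only once. Keeping the length convention consistent with this source-weighting --- so that the exponents $k_i$ sum to the number of vertices actually visited --- is the one point worth checking carefully; the remainder is the standard identification of matrix powers with walk counts.
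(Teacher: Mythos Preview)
Your argument is correct and follows essentially the same route as the paper: an induction on $n$ to identify the entries of the relevant matrix power as walk-generating polynomials, followed by a sum over endpoints. The paper compresses all of this into a single sentence (inducting directly on $B^nD$ rather than first treating $B^n$ and then right-multiplying by $D$), so your version is really just a more explicit unpacking of the same idea; your remark about the source-weighting and the trailing $D$ supplying the terminal vertex is exactly the bookkeeping the paper leaves implicit.
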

\begin{proof}
Induction on $n$ shows that the number of walks that visit $v_i$ exactly $k_i$ times and begin at $v_s$ is the coefficient of $v_1^{k_1}\cdots v_q^{k_q}$ in $\sum_{j=1}^q (B^nD)_{s,j}$. Summing over all values of $s$ proves the lemma. 
\end{proof}

\begin{corollary}
If $\mathcal G = (G,\mathcal S)$ is a set-equipped graph, then
\[ \sum_{n=0}^\infty \sum_{i,j} \big(B_{\mathcal G}(x)^nD_{\mathcal G}(x)\big)_{i,j} =\ \sum_{\mathclap{w\in W(G)}} \,H_w(x). \]
\end{corollary}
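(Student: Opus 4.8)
The plan is to prove the identity as an equality of formal power series in $x$, obtained by pushing \cref{thm:symbolic_adj} through the evaluation map that turns the symbolic matrices into the generating matrices. Let $\varphi$ denote the substitution homomorphism on polynomials (and power series) in the indeterminates $v_1,\dots,v_p$ that sends each $v_i$ to the power series $H_i(x) = \sum_{s \in S_i} x^s$. The first thing I would record is that $\varphi$ carries the symbolic matrices to the generating matrices: since $(B_G)_{i,j} = v_i (A_G)_{i,j}$, we get $\varphi(B_G) = B_G(x)$ (the generating matrix) and $\varphi(D_G) = D(x)$. Because $\varphi$ is a ring homomorphism, it commutes with matrix addition and multiplication, so applying it entrywise yields $\varphi\big((B_G^n D_G)_{i,j}\big) = \big(B_G(x)^n D(x)\big)_{i,j}$ for every $n,i,j$.

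Next I would fix $n$ and invoke \cref{thm:symbolic_adj}: the polynomial $\sum_{i,j}(B_G^n D_G)_{i,j}$ has, as the coefficient of the monomial $v_1^{k_1}\cdots v_p^{k_p}$, the number of walks (of the length matched to $n$) that visit each vertex $v_i$ exactly $k_i$ times. Applying $\varphi$ sends that monomial to $\prod_{i} H_i(x)^{k_i}$, which is precisely $H_w(x)$ for any walk $w$ with that visit profile, because $H_w(x) = \prod_{t} H_{v_t}(x)$ contributes one factor $H_i(x)$ for each visit to $v_i$. Summing over all monomials therefore gives $\sum_{i,j}\big(B_G(x)^n D(x)\big)_{i,j} = \sum_{w} H_w(x)$, where $w$ ranges over the walks enumerated by the $n$-th term.

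I would then sum over $n \ge 0$. The sum is a well-defined formal power series: since each $S_i \subseteq \N$, every $H_i(x)$ has no constant term, so a walk $(v_1,\dots,v_m)$ contributes an $H_w(x)$ of order at least $m$, and hence each coefficient of $x^m$ receives contributions from only finitely many walks (as $G$ is finite). Reading the index convention of \cref{thm:symbolic_adj} carefully — the power $B^n$ records walks with $n+1$ vertices — each nonempty walk of $W(G)$ is counted exactly once as $n$ ranges over $\N_0$, so the terms partition $W(G)$ with no omission or repetition, giving $\sum_{n=0}^\infty \sum_{i,j}\big(B_G(x)^n D(x)\big)_{i,j} = \sum_{w \in W(G)} H_w(x)$.

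The one point that requires care, and the likely source of error, is the bookkeeping between the matrix index $n$ and the walk length: I would check the base case explicitly, verifying that $\sum_{i,j}(B_G^0 D_G)_{i,j}$ evaluates under $\varphi$ to $\sum_i H_i(x)$, i.e.\ exactly the one-vertex walks, so that the alignment of each $n$ with its walks is correct and the union over $n$ is genuinely a partition of $W(G)$. Everything else follows directly from \cref{thm:symbolic_adj} together with the fact that evaluation at $v_i = H_i(x)$ is a ring homomorphism.
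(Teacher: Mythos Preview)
Your proposal is correct and follows essentially the same route as the paper: both apply the substitution $v_i \mapsto H_i(x)$ to the symbolic identity of \cref{thm:symbolic_adj}, so that the monomial $v_1^{k_1}\cdots v_p^{k_p}$ counting walks with a given visit profile becomes $\prod_i H_i(x)^{k_i} = H_w(x)$, and then sum over all walks. Your write-up is in fact more careful than the paper's---you make the homomorphism $\varphi$ explicit, check that the formal sum over $n$ is well defined because each $H_i$ has no constant term, and verify the $n=0$ base case to nail down the alignment between the matrix power and the walk length---but the underlying argument is the same.
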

\begin{proof}
The coefficient of $H_1(x)^{k_1}\cdots H_q(x)^{k_q}$ in the sum $\sum_{w\in W(G)} H_w(x)$ is the number of walks in $G$ that visit vertex $i$ exactly $k_i$ times, so it is equal to the coefficient of $v_1^{k_1}\cdots v_q^{k_q}$ in $\sum_{i,j}(B_G^n D)_{i,j}$. Summing over all compositions $k_1 + \cdots + k_q$ of $n$ and setting $v_i = x$ for every $1 \leq i \leq q$ shows that the coefficients of $x^n$ in $\sum_{w\in W(G)} H_w(x)$ and $\sum_{n=0}^\infty \sum_{i,j} \!\big(B_{\mathcal G}(x)^nD_{\mathcal G}(x)\big)_{i,j}$ are equal.
\end{proof}

We can eliminate the diagonal matrix while maintaining the radius of convergence.

\begin{lemma}\label{thm:matrix_convergence}
The power series $\sum_{n=0}^\infty \sum_{i,j} \!\big(B_{\mathcal G}(x)^n D_{\mathcal G}(x)\big)_{i,j}$ and $\sum_{n=0}^\infty \sum_{i,j} \!\big(B_{\mathcal G}(x)^n\big)_{i,j}$ have the same radius of convergence.
\end{lemma}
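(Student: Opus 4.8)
The plan is to compare the two series termwise, exploiting the elementary fact that a power series with nonnegative coefficients converges at a real point $x>0$ exactly when its sum of (nonnegative) terms is finite there, and that its radius of convergence equals $\sup\{x>0 : \text{the series is finite at } x\}$. Since $B(x) = D(x)A_G$ has nonnegative entries for every $x \ge 0$ and each $H_i(x) = \sum_{s\in S_i} x^s$ has nonnegative coefficients, both series in question have nonnegative coefficients; so it suffices to show they converge at the same positive reals. Write $f_1(x) = \sum_{n}\sum_{i,j}\big(B^n(x)D(x)\big)_{i,j}$ and $f_2(x) = \sum_{n}\sum_{i,j}\big(B^n(x)\big)_{i,j}$ for the two series.

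First I would record that both series diverge at $x=1$, so each has radius of convergence at most $1$. This is essentially the observation already used in the proof of \cref{thm:sum_over_walks}: because $G$ contains a cycle, there are walks of unbounded length, contributing nonzero terms of arbitrarily large degree, and since the coefficients are nonnegative integers the value at $x=1$ is infinite. Consequently it is enough to compare convergence on the interval $(0,1)$.

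The crux is a sandwiching argument on this interval. Writing $\mathbf 1$ for the all-ones vector, I would express $\sum_{i,j}(B^n)_{i,j} = \mathbf 1^{\mathsf T} B^n \mathbf 1$ and $\sum_{i,j}(B^n D)_{i,j} = \mathbf 1^{\mathsf T} B^n (D\mathbf 1)$. For a fixed $x \in (0,1)$ each $H_i(x)$ converges to a value in $(0,\infty)$, so with $m(x) = \min_i H_i(x) > 0$ and $M(x) = \max_i H_i(x) < \infty$ we have the componentwise bounds $m(x)\mathbf 1 \le D\mathbf 1 \le M(x)\mathbf 1$. Since $B^n(x)$ has nonnegative entries for $x \ge 0$, multiplying by $\mathbf 1^{\mathsf T} B^n$ preserves these inequalities, giving
\[
    m(x)\sum_{i,j}(B^n)_{i,j} \;\le\; \sum_{i,j}(B^n D)_{i,j} \;\le\; M(x)\sum_{i,j}(B^n)_{i,j}
\]
for every $n$. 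Summing over $n$ yields $m(x)\,f_2(x) \le f_1(x) \le M(x)\,f_2(x)$, and because $0 < m(x) \le M(x) < \infty$, the series $f_1$ is finite at $x$ if and only if $f_2$ is.

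Finally I would assemble the pieces: $f_1$ and $f_2$ converge at the same points of $(0,1)$, and each has radius of convergence at most $1$, so each radius equals $\sup\{x \in (0,1) : \text{convergence holds}\}$, and these suprema coincide. I expect the only delicate point to be the bookkeeping at the endpoint $x=1$—verifying that both series genuinely diverge there so that the relevant supremum is taken over the same bounded interval—whereas the sandwich inequality itself is routine once the nonnegativity of the entries of $B^n(x)$ is invoked.
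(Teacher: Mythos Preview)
Your proposal is correct and follows essentially the same approach as the paper: both arguments rest on the sandwich inequality $m(x)\,f_2(x) \le f_1(x) \le M(x)\,f_2(x)$ obtained from $m(x)\mathbf 1 \le D(x)\mathbf 1 \le M(x)\mathbf 1$ and the nonnegativity of $B^n(x)$. The only cosmetic difference is that the paper handles the possibility that some $H_i(x)$ diverges by a direct case split (``if any $H_i(x)=\infty$ then both series diverge''), whereas you preempt that case by first observing that both radii are at most $1$ and then restricting the comparison to $x\in(0,1)$, where every $H_i(x)$ is automatically finite and positive; this is equivalent bookkeeping.
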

\begin{proof}
    As in the proof of \cref{thm:sum_over_walks}, the radii of convergence can be determined by examining only nonnegative numbers. Fix some $x \in \mathbb{R}_{\geq 0}$. If $H_v(x)$ diverges for any $v \in V(G)$, then both $\sum_{n=0}^\infty \sum_{i,j} \big(B_{\mathcal G}(x)^n\big)_{i,j}$ and $\sum_{n=0}^\infty \sum_{i,j} \big(B_{\mathcal G}(x)^n D_{\mathcal G}(x)\big)_{i,j}$ diverge. So suppose that $H_v(x)$ is finite for each $v \in V(G)$. If $c$ is the minimum of $\{H_1(x), \dots, H_q(x)\}$ and $d$ is the maximum, then
    \[ c \sum_{n=0}^\infty \sum_{i,j} \!\big(B_{\mathcal G}(x)^n\big)_{i,j} \leq
    \sum_{n=0}^\infty \sum_{i,j} \!\big(B_{\mathcal G}(x)^n D_{\mathcal G}(x)\big)_{i,j} \leq
    d \sum_{n=0}^\infty \sum_{i,j} \!\big(B_{\mathcal G}(x)^n\big)_{i,j}. \]
    Since both $c$ and $d$ are nonnegative, it follows that $\sum_{n=0}^\infty \sum_{i,j} \big(B_{\mathcal G}(x)^nD_{\mathcal G}(x)\big)_{i,j}$ converges if and only if $\sum_{n=0}^\infty \sum_{i,j} \big(B_{\mathcal G}(x)^n\big)_{i,j}$ does.
\end{proof}

To complete the proof of \cref{thm:entropy_eigen=1}, we need two more ingredients. The first is a consequence of the Perron-Frobenius theorem from linear algebra (see Chapter 4 of \cite{Lind-Marcus}). We call a nonnegative square matrix $A$ \textit{irreducible} if for every pair of indices $i,j$, there is a positive integer $n$ so that $(A^n)_{i,j} > 0$. The adjacency matrix $A_G$ is irreducible if and only if $G$ is irreducible as a graph (as defined on page \pageref{def:irreducible-graph}).

\begin{theorem}\label{thm:perron-frobenius-original}
If $A$ is an irreducible matrix with spectral radius $\rho$, then there exist positive real numbers $c$ and $d$ such that
\[  c\rho^n \leq \sum_{i,j} (A^n)_{i,j} \leq d\rho^n\vspace{-0.75\baselineskip}  \]
for every $n \in \N$.
\end{theorem}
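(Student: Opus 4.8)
The plan is to deduce both bounds from the existence of a strictly positive Perron eigenvector, using the elementary observation that
$\sum_{i,j}(A^n)_{i,j} = \mathbf 1^{T} A^n \mathbf 1$,
where $\mathbf 1 = (1,\dots,1)^{T}$ is the all-ones vector. Thus the entire task reduces to sandwiching the scalar $\mathbf 1^{T} A^n \mathbf 1$ between constant multiples of $\rho^n$.

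First I would invoke the Perron--Frobenius theorem: since $A$ is nonnegative and irreducible, its spectral radius $\rho$ is an eigenvalue admitting a strictly positive right eigenvector $v$, so that $Av = \rho v$ and $v_i > 0$ for each $i$. Strict positivity is exactly what lets me compare $v$ with $\mathbf 1$ entrywise. Setting $\alpha = \min_i v_i^{-1}$ and $\beta = \max_i v_i^{-1}$, both finite and positive, I obtain the coordinatewise inequalities $\alpha v \leq \mathbf 1 \leq \beta v$.

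Next, because every entry of $A^n$ is nonnegative, multiplication by $A^n$ preserves coordinatewise inequalities; combined with $A^n v = \rho^n v$, applying $A^n$ to the chain above gives $\alpha \rho^n v \leq A^n \mathbf 1 \leq \beta \rho^n v$. Left-multiplying by the nonnegative row vector $\mathbf 1^{T}$ and writing $\sigma = \sum_i v_i > 0$, I arrive at $\alpha \sigma \rho^n \leq \mathbf 1^{T} A^n \mathbf 1 \leq \beta \sigma \rho^n$, which is precisely the asserted bound with $c = \alpha\sigma$ and $d = \beta\sigma$.

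The only nonelementary ingredient is the guarantee of a strictly positive eigenvector, which is the content of Perron--Frobenius for irreducible matrices; I would cite it (Chapter 4 of \cite{Lind-Marcus}) rather than reprove it. Everything afterward is just order-preservation under multiplication by a nonnegative matrix, so there is no genuine obstacle. The one point to watch is that \emph{strict} positivity of $v$—not mere nonnegativity—is what forces $\alpha$ and $\beta$ to be finite and positive, and this is exactly where the irreducibility hypothesis is used.
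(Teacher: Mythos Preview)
Your argument is correct: the strictly positive Perron eigenvector lets you sandwich $\mathbf 1$ between positive multiples of $v$, and nonnegativity of $A^n$ then propagates the bounds through to $\mathbf 1^T A^n \mathbf 1$. The paper does not actually prove this statement---it is stated as a consequence of Perron--Frobenius and cited to Chapter~4 of \cite{Lind-Marcus}---so there is no in-paper proof to compare against; your write-up is precisely the standard derivation one would expect from that reference.
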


This result can be extended to all nonnegative matrices with only a modest change to the upper bound. The proof of this proposition is somewhat \textit{ad hoc} (in the context of this paper), so it has been relegated to \cref{sec:appendix}.

\begin{proposition}\label{thm:perron-frobenius}
If $A$ is a nonnegative square matrix with spectral radius $\rho$, then there exist positive real numbers $c$ and $d$ and an integer $k$ such that
\[  c\rho^n \leq \sum_{i,j} (A^n)_{i,j} \leq dn^k\rho^n\vspace{-0.5\baselineskip}  \]
for every $n \in \N$.
\end{proposition}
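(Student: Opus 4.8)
The plan is to reduce the general nonnegative case to the irreducible case already handled by \cref{thm:perron-frobenius-original}. Since $\sum_{i,j}(A^n)_{i,j}$ is unchanged when $A$ is replaced by $PAP^{-1}$ for a permutation matrix $P$, I would first conjugate $A$ into Frobenius normal form: a block upper-triangular matrix whose diagonal blocks $A_1,\dots,A_m$ are the restrictions of $A$ to the strongly connected components of its associated digraph, so that each $A_t$ is either irreducible or a $1\times 1$ zero block. The spectral radius of a block-triangular matrix is the maximum of the spectral radii of its diagonal blocks, so $\rho=\max_t \rho(A_t)$. I would assume $\rho>0$; this is the only case used in the paper, since the matrices to which the proposition is applied have the edge pattern of a graph containing a cycle. (When $\rho=0$ the matrix is nilpotent, and the upper bound cannot hold for a nonzero $A$.)

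For the lower bound, note that the diagonal blocks of $A^n$ are exactly $A_t^{\,n}$. Choose an index $t^\ast$ with $\rho(A_{t^\ast})=\rho$; since $\rho>0$, the block $A_{t^\ast}$ is not a zero block, hence is irreducible. Because all entries of $A^n$ are nonnegative, the full entry sum is at least the entry sum of the diagonal block $A_{t^\ast}^{\,n}$, and \cref{thm:perron-frobenius-original} gives $\sum_{i,j}(A_{t^\ast}^{\,n})_{i,j}\ge c\rho^n$ for some $c>0$. This establishes the left inequality.

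For the upper bound, I would expand the $(s,t)$ block of $A^n$ over block walks: it is a sum, over strictly increasing chains $s=t_0<t_1<\cdots<t_r=t$ of block indices and over compositions $n_0+\cdots+n_r=n-r$ with $n_i\ge 0$, of products $A_{t_0}^{\,n_0}C_0 A_{t_1}^{\,n_1}C_1\cdots C_{r-1}A_{t_r}^{\,n_r}$, where each $C_i$ is the off-diagonal connector block from $t_{i-1}$ to $t_i$. The entry sum is submultiplicative, $\sum_{i,j}(MN)_{i,j}\le\big(\sum_{i,j}M_{i,j}\big)\big(\sum_{i,j}N_{i,j}\big)$ for nonnegative $M,N$, so the entry sum of each such product is at most $\big(\prod_i \text{(entry sum of }A_{t_i}^{\,n_i})\big)\cdot K^r$, where $K$ bounds the entry sum of every connector block. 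Applying \cref{thm:perron-frobenius-original} to each irreducible diagonal block (using $A_t^{0}=I$, while a zero diagonal block simply kills the term unless its exponent is $0$) bounds each diagonal factor by $d_t\,\rho(A_t)^{n_i}\le d_t\,\rho^{n_i}$, so the product is at most a constant times $\rho^{\,n_0+\cdots+n_r}=\rho^{\,n-r}$.

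Summing over the $\binom{n}{r}$ compositions of $n-r$ into $r+1$ nonnegative parts and then over the finitely many chains, and bounding $\binom{n}{r}\le n^{r}$ and $\rho^{-r}$ by a constant, the entry sum of each block of $A^n$ is at most $d\,n^{r}\rho^{n}$. Taking $k=m-1$, the maximal chain length minus one, and summing over the finitely many blocks yields $\sum_{i,j}(A^n)_{i,j}\le d\,n^{k}\rho^{n}$, as required. I expect the block-walk expansion and its uniform entrywise bounds to be the main obstacle: one must set up the expansion of $A^n$ for a block-triangular matrix carefully, track that the connector blocks contribute only bounded constants, and verify that the number of compositions is exactly $\binom{n}{r}$, which is precisely where the polynomial factor $n^k$ originates.
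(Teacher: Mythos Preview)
Your proposal is correct and follows essentially the same strategy as the paper: pass to Frobenius normal form, take the lower bound from a single irreducible diagonal block of maximal spectral radius, and obtain the upper bound by decomposing according to the (at most $m-1$) transitions between blocks, which is where the polynomial factor $n^k$ comes from. The only cosmetic difference is that the paper phrases the upper bound in graph-theoretic language---summing $f(w)=\prod A_{v_i,v_{i+1}}$ over walks on the complete graph and grouping by the ``arrangement'' of interblock edges---whereas you phrase it as a block-matrix expansion $A_{t_0}^{n_0}C_0A_{t_1}^{n_1}\cdots C_{r-1}A_{t_r}^{n_r}$ together with submultiplicativity of the entry sum; these are the same decomposition viewed from two sides, and your bookkeeping via chains and $\binom{n}{r}$ compositions is arguably cleaner than the paper's crude $(m^2n)^k$ count. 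Your observation that the stated inequality fails for nonzero nilpotent $A$ (i.e.\ when $\rho=0$) is also correct; the paper does not address this, but as you note it never applies the proposition in that regime.
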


The second ingredient is a lemma about the spectral radius of a generating matrix; its proof, too, may be found in \cref{sec:appendix}.

\begin{lemma}\label{thm:lambda_is_increasing}
If $(G,\mathcal S)$ is a set-equipped graph and $\rho(x)$ is the spectral radius of $B_{\mathcal G}(x)$, then $\rho$ is continuous and strictly increasing on the set of nonnegative real numbers with range $[0,+\infty)$.
\end{lemma}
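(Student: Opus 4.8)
The plan is to reduce to the irreducible case via the strongly connected component decomposition of $G$ and then apply the Perron--Frobenius theorem block by block. The starting observation is that, after relabelling vertices so that the adjacency matrix is block upper-triangular with irreducible diagonal blocks (exactly as in the proof of \cref{thm:perron-frobenius}), the matrix $B_{\mathcal G}(x) = D_{\mathcal G}(x)A_G$ inherits this block structure, since left multiplication by the diagonal matrix $D_{\mathcal G}(x)$ preserves the zero pattern. The eigenvalues of a block triangular matrix are precisely those of its diagonal blocks, so $\rho(x) = \max_C \rho_C(x)$, where $C$ ranges over the diagonal blocks and $\rho_C(x)$ is the spectral radius of the corresponding sub-matrix $B_C(x)$. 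A block coming from a single vertex with no loop is the $1\times 1$ zero matrix and does not affect the maximum; since a nonempty $\mathcal S$-graph shift forces $G$ to contain a cycle, at least one block is nontrivial, so the maximum is attained by blocks whose vertices lie on cycles. In particular, vertices lying on no cycle---including any carrying an infinite set $S_i$---are irrelevant to $\rho$.

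Working block by block, fix a nontrivial (hence irreducible) block $C$. Each entry of $B_C(x)$ is either $0$ or one of the generating functions $H_i(x)=\sum_{s\in S_i}x^s$, and since $\min S_i\ge 1$ every $H_i$ is continuous and strictly increasing on the interior of its interval of convergence, with $H_i(0)=0$. The spectral radius is a continuous function of the matrix entries, so $\rho_C$ is continuous where the entries are finite, and taking the maximum over the finitely many blocks shows $\rho$ is continuous. For monotonicity I would first record the weak statement that $0\le A\le A'$ entrywise implies $\rho(A)\le\rho(A')$, which applied to $B_{\mathcal G}(x)\le B_{\mathcal G}(x')$ for $x<x'$ gives that $\rho$ is nondecreasing. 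To upgrade this to strict monotonicity I would invoke the strict form of Perron--Frobenius for the irreducible block $C$: increasing any entry of an irreducible nonnegative matrix strictly increases its spectral radius. Because raising $x$ strictly increases every $H_i(x)$, hence every nonzero entry of $B_C(x)$, each $\rho_C$ is strictly increasing; and a maximum of finitely many strictly increasing functions is again strictly increasing, so $\rho$ is strictly increasing.

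It remains to pin down the range. Since $B_{\mathcal G}(0)$ is the zero matrix, $\rho(0)=0$, giving the left endpoint. For the other end, fix a cycle $C'$ of length $\ell$ inside a nontrivial block $C$; comparing $\tr\!\big(B_C(x)^\ell\big)\le \lvert C\rvert\,\rho_C(x)^\ell$ with the (nonnegative) closed-walk contribution $\prod_{i\in C'}H_i(x)$ of that cycle to the trace yields the lower bound $\rho(x)\ge\big(\prod_{i\in C'}H_i(x)\big/\lvert C\rvert\big)^{1/\ell}$. Choosing $C'$ so that $\min_{i\in C'} R_i$ equals the right endpoint $R^\ast$ of the domain on which $\rho$ is finite, this bound tends to $+\infty$ as $x$ approaches $R^\ast$ from below: the $H_i$ along the cycle are either nonconstant polynomials, which blow up as $x\to\infty$ when $R^\ast=\infty$, or include an infinite series of radius of convergence $1$, which blows up as $x\to 1^-$ when $R^\ast=1$. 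Thus $\rho$ is unbounded. Being continuous, strictly increasing, equal to $0$ at $x=0$, and unbounded, $\rho$ takes every value in $[0,+\infty)$ exactly once by the intermediate value theorem.

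The step I expect to be the main obstacle is the strict monotonicity: weak monotonicity of the spectral radius holds for arbitrary nonnegative matrices, but strictness can fail for reducible ones, so the argument genuinely relies on first isolating the irreducible diagonal blocks and only then applying the strict Perron--Frobenius comparison simultaneously to all of them. The remaining point requiring care is bookkeeping---checking that vertices carrying infinite sets $S_i$ but lying on no cycle neither disturb continuity nor obstruct the range computation---which the block decomposition handles cleanly, since such vertices sit in trivial diagonal blocks and contribute $0$ to the maximum.
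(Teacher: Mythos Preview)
Your argument is correct and takes a genuinely different route from the paper's. The paper never decomposes into irreducible blocks inside this lemma; instead it pushes everything through the growth bounds of \cref{thm:perron-frobenius}. For strict monotonicity, the paper compares $\sum_{i,j}(B^n(y))_{i,j}$ and $\sum_{i,j}(B^n(x))_{i,j}$ directly: since every monomial in the expansion has degree at least $n$, one gets $\sum_{i,j}(B^n(y))_{i,j}\ge (y/x)^n\sum_{i,j}(B^n(x))_{i,j}$, and sandwiching with the bounds $c\rho^n\le \sum_{i,j}(B^n)_{i,j}\le dn^k\rho^n$ forces $\rho(y)>\rho(x)$ once $n$ is large. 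For unboundedness it argues $B(2x)\ge 2B(x)$ entrywise and again uses \cref{thm:perron-frobenius} to deduce $\rho(2x)\ge 2\rho(x)$. Your approach instead invokes the strict Perron--Frobenius comparison on each irreducible diagonal block and a trace-over-cycle lower bound; this is cleaner and more classical, and it has the bonus of explaining transparently why vertices off every cycle (even those carrying divergent $H_i$) are invisible to $\rho$. The paper's approach, by contrast, is more self-contained within its own framework: it reuses \cref{thm:perron-frobenius}, which is already needed for \cref{thm:entropy_eigen=1}, and avoids citing the strict-monotonicity form of Perron--Frobenius as an external fact.
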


We can now prove our first main theorem, stated in terms of set-equipped graphs below.

\begin{reptheorem}{thm:entropy_eigen=1}
    Let $\mathcal G = (G,\mathcal{S})$ be a set-equipped graph and $\rho(x)$ be the spectral radius of $B_{\mathcal G}(x)$. The equation $\rho(x)=1$ has a unique positive solution $x=\lambda$, and the entropy of $X(G,\mathcal{S})$ is $\log \lambda^{-1}$.
\end{reptheorem}
\begin{proof}
Let $x > 0$ be within the radius of convergence of each $H_i(x)$. By \cref{thm:perron-frobenius} there are positive constants $c$, $d$, and $k$ such that
\[ c\rho(x)^n \leq \sum_{i,j} \,\big(B_{\mathcal G}(x)^n\big)_{i,j} \leq dn^k \rho(x)^n \]
for every $n \in \N$. Summing over $n$, we get
\[ c \sum_{n=0}^\infty \rho(x)^n \leq
    \sum_{n=0}^\infty \sum_{i,j} \,\big(B_{\mathcal G}(x)^n\big)_{i,j} \leq
    d \sum_{n=0}^\infty n^k\rho(x)^n. \]
The bounding series converge if and only if $\vert \rho(x) \vert < 1$. \cref{thm:lambda_is_increasing} guarantees a unique positive solution $x = \lambda$ to the equation $\rho(x) = 1$. Since $\rho$ is a strictly increasing function, the series $\sum_{n=0}^\infty \sum_{i,j} (B_{\mathcal G}(x)^n)_{i,j}$ converges when $x < \lambda$ and diverges when $x > \lambda$, so $\lambda$ is the radius of convergence. By \cref{thm:sum_over_walks}, the entropy of $X(G,\mathcal{S})$ is $\log \lambda^{-1}$.
\end{proof}

\begin{remark}
    In practice, the solution to $\rho(x) = 1$ can be calculated by determining the least positive value of $x$ for which $\det\!\big(I - B_\mathcal G(x)\big) = 0$. The reason is that $\det\!\big(I - B_\mathcal G(x)\big) = 0$ exactly when 1 is an eigenvalue of $B_\mathcal G(x)$. This happens only when $\rho(x) \geq 1$ and certainly when $\rho(x) = 1$. Since $\rho$ is an increasing function, the least positive solution to $\rho(x) = 1$ is also the least solution to $\det\!\big(I - B_\mathcal G(x)\big) = 0$.
\end{remark}

A shift space $X$ is called \textit{almost sofic} if for every $\epsilon > 0$ there exists a subshift of finite type $Y \subseteq X$ such that $h(Y) \geq h(X) - \epsilon$. Almost sofic shifts were introduced by Petersen in \cite{almost-sofic}; they are the class of shift spaces that are particularly interesting from the perspective of data encoding.

\begin{corollary}
Every $\mathcal{S}$-graph shift is almost sofic.
\end{corollary}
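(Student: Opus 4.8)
The plan is to approximate $X$ from within by a sequence of SFTs obtained by truncating the gap sets. For each $N \in \N$, set $S_i^{(N)} = \{s \in S_i : s \le N\}$ and let $Y_N = X(G, \mathcal S^{(N)})$ be the resulting $\mathcal S$-graph shift on the \emph{same} graph $G$, where $\mathcal S^{(N)} = (S_i^{(N)})_{i \in V(G)}$. For $N$ large enough that every $S_i^{(N)}$ is nonempty, $Y_N$ is a well-defined nonempty $\mathcal S$-graph shift whose language is contained in $\mathcal L(X)$, so $Y_N \subseteq X$ (shift spaces are determined by their languages; cf.\ \cref{thm:language-equality}). Since each $S_i^{(N)}$ is finite, \cref{thm:SFT} guarantees that $Y_N$ is an SFT. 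It therefore remains only to show that $h(Y_N) \to h(X)$ as $N \to \infty$; then for any $\epsilon > 0$ a sufficiently large $N$ furnishes an SFT $Y_N \subseteq X$ with $h(Y_N) \geq h(X) - \epsilon$, which is exactly the almost sofic condition.

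To control the entropies, I would pass through the generating matrices. Write $B(x)$ for the generating matrix of $\mathcal G$ and $B^{(N)}(x)$ for that of $(G, \mathcal S^{(N)})$, with spectral radii $\rho$ and $\rho^{(N)}$. By \cref{thm:entropy_eigen=1}, we have $h(X) = \log \lambda^{-1}$ and $h(Y_N) = \log \lambda_N^{-1}$, where $\lambda$ and $\lambda_N$ are the unique positive solutions of $\rho(x) = 1$ and $\rho^{(N)}(x) = 1$, respectively. Since $h(X) \geq 0$ forces $\lambda \leq 1$, the degenerate case $\lambda = 1$ (that is, $h(X) = 0$) is immediate, as any single $Y_N$ already satisfies $h(Y_N) \geq 0 = h(X)$. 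So assume $\lambda < 1$. The inclusion $Y_N \subseteq X$ gives $h(Y_N) \leq h(X)$, hence $\lambda_N \geq \lambda$ for every $N$, and it suffices to prove $\limsup_N \lambda_N \leq \lambda$.

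The key observation is that on the interval $(0,1)$ the truncated spectral radii converge to $\rho$ pointwise. Indeed, for $x \in (0,1)$ each $H_i^{(N)}(x)$ is a partial sum of the convergent series $H_i(x) = \sum_{s \in S_i} x^s$, so $B^{(N)}(x) \to B(x)$ entrywise; since the spectral radius is a continuous function of the matrix entries, $\rho^{(N)}(x) \to \rho(x)$. Now fix any $\mu$ with $\lambda < \mu < 1$. By \cref{thm:lambda_is_increasing}, $\rho$ is strictly increasing, so $\rho(\mu) > \rho(\lambda) = 1$; consequently $\rho^{(N)}(\mu) > 1$ for all large $N$. Because each $\rho^{(N)}$ is strictly increasing (again by \cref{thm:lambda_is_increasing}, applied to $(G, \mathcal S^{(N)})$) with $\rho^{(N)}(\lambda_N) = 1$, this forces $\lambda_N < \mu$ for all large $N$, whence $\limsup_N \lambda_N \leq \mu$. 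Letting $\mu \downarrow \lambda$ yields $\lambda_N \to \lambda$ and therefore $h(Y_N) \to h(X)$, completing the argument.

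The main obstacle is the final paragraph: one must ensure the approximating evaluation point $\mu$ lies in the common region of convergence of all the $H_i$, so that the entrywise convergence $B^{(N)}(\mu) \to B(\mu)$ — and hence $\rho^{(N)}(\mu) \to \rho(\mu)$ — is legitimate. This is precisely why the reduction to the case $\lambda < 1$ matters: it allows the choice $\mu \in (\lambda,1)$, where every $H_i$ converges absolutely. Everything else (SFT-ness via \cref{thm:SFT}, the inclusion $Y_N \subseteq X$, and the monotonicity $\lambda_N \geq \lambda$) is routine given the results already established.
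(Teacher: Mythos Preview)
Your proof is correct and follows essentially the same approach as the paper: truncate each $S_i$ to $S_i \cap [1,N]$, note that the resulting shifts are SFTs contained in $X$, and use pointwise convergence of the generating matrices (hence of their spectral radii) on $(0,1)$ to deduce $h(Y_N) \to h(X)$. In fact, you are more careful than the paper, which simply asserts that pointwise convergence of spectral radii yields $h(X(\mathcal G_n)) \to h(X(\mathcal G))$ without the monotonicity argument you supply to pass from $\rho^{(N)}(x) \to \rho(x)$ to $\lambda_N \to \lambda$.
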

\begin{proof}
Let $\mathcal G = (G,\mathcal S)$ be a set-equipped graph. For each $v \in V(G)$ and $n \in \N$, define $S_v^n = S_v \cap \{1,2,\dots,n\}$ and $\mathcal S_n = (S^n_v)_{v \in V(G)}$ and set $\mathcal G_n = (G,\mathcal S_n)$. Then $B_{\mathcal G_n}(x)$, considered as a matrix-valued function of $x$, tends to $B_{\mathcal G}(x)$ pointwise as $n \to \infty$. If $\rho_n(x)$ as the spectral radius of $B_{\mathcal G_n}(x)$ and $\rho(x)$ as the spectral radius of $B_\mathcal G(x)$, then $\rho_n(x) \to \rho(x)$ pointwise as $n \to \infty$ (where $\rho_n(x)$ and $\rho(x)$ are considered as real-valued functions of $x$). \cref{thm:lambda_is_increasing} and \cref{thm:entropy_eigen=1} together imply that $h(X(\mathcal G_n)) \to h(X(\mathcal G))$. Since every set in $\S_n$ is finite, \cref{thm:SFT} guarantees that $X(\mathcal G_n)$ is an SFT for every $n \in \N$, so $X(\mathcal G)$ is almost sofic.
\end{proof}

\cref{thm:entropy_eigen=1} can be phrased in terms of an explicit power series to more closely mirror the entropy formula for $S$-gap shifts. We denote by $\mathcal C_G$ the collection of all nonempty sets of vertex-disjoint cycles in a graph $G$. That is, each element of $\mathcal C_G$ is itself a set of cycles in $G$ which do not overlap. For $C \in \mathcal C_G$, we let $|C|$ denote the cardinality of $C$. Recall that the length of a cycle $c$, denoted $\ell(c)$, is the number of vertices that appear in the cycle.

\begin{theorem}\label{thm:entropy_sumovercycles}
Let $(G,\mathcal S)$ be a set-equipped graph. If $\lambda$ is the least positive solution to
\begin{equation}\label{eq:power-series-entropy-formula}
    \sum_{C \in \mathcal C_G}(-1)^{|C| + 1} \prod_{c\,\in C} H_c(x) = 1,
\end{equation}
then the entropy of $X(G,\mathcal{S})$ is $\log\lambda^{-1}$.
\end{theorem}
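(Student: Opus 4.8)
The plan is to reduce this to \cref{thm:entropy_eigen=1} by recognizing equation \eqref{eq:power-series-entropy-formula} as a disguised form of the equation $\det\!\big(I - B_{\mathcal G}(x)\big) = 0$, and then using the remark following \cref{thm:entropy_eigen=1} to identify the least positive root of this determinant with the solution $\lambda$ of $\rho(x) = 1$. Everything therefore hinges on a single combinatorial identity, after which the theorem follows by citing results already in hand.

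First I would establish the identity
\[
    \det\!\big(I - B_{\mathcal G}(x)\big) = \sum_{\mathcal L} (-1)^{\lvert \mathcal L\rvert} \prod_{c} H_c(x),
\]
where $\mathcal L$ ranges over all sets of pairwise vertex-disjoint cycles in $G$ (including the empty set), $\lvert\mathcal L\rvert$ is the number of cycles, and the product is taken over all vertices $c$ covered by the cycles in $\mathcal L$. The proof is the Leibniz (permutation) expansion of the determinant. Since $G$ is simple, $B_{\mathcal G}(x)$ has zero diagonal, so a permutation $\sigma$ contributes a nonzero term exactly when each of its non-fixed points lies on a directed cycle of $G$; the non-trivial cycles of such a $\sigma$ then form a vertex-disjoint collection $\mathcal L$, and conversely each such collection arises from a unique $\sigma$. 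Because the weight $B_{\mathcal G}(x)_{i,\sigma(i)} = H_i(x)$ depends only on the tail $i$, the product of arc-weights around the cycles is precisely $\prod_c H_c(x)$ over the covered vertices. The only delicate point, and the step I expect to be the main obstacle, is the sign bookkeeping: a permutation whose moved points total $L$ and which has $m = \lvert\mathcal L\rvert$ non-trivial cycles has $\operatorname{sgn}(\sigma) = (-1)^{L-m}$, while the $L$ off-diagonal factors $-B_{\mathcal G}(x)_{i,\sigma(i)}$ contribute a sign $(-1)^L$; these combine to $(-1)^{2L-m} = (-1)^m$, matching the claimed exponent.

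Isolating the empty collection (which contributes $1$) then rewrites the identity as
\[
    \det\!\big(I - B_{\mathcal G}(x)\big) = 1 - \sum_{C \in \mathcal C_G} (-1)^{\lvert C\rvert + 1} \prod_{c \in C} H_c(x),
\]
so $\det\!\big(I - B_{\mathcal G}(x)\big) = 0$ if and only if equation \eqref{eq:power-series-entropy-formula} holds. To finish, I would connect this to the spectral radius exactly as in the remark after \cref{thm:entropy_eigen=1}: for $x > 0$ the matrix $B_{\mathcal G}(x)$ is nonnegative, so by Perron-Frobenius $\rho(x)$ is an eigenvalue, whence the determinant vanishes when $\rho(x) = 1$; and since every eigenvalue has modulus at most $\rho(x)$, the determinant is nonzero whenever $\rho(x) < 1$. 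By \cref{thm:lambda_is_increasing}, $\rho$ increases strictly from $0$, so the least positive $x$ with $\det\!\big(I - B_{\mathcal G}(x)\big) = 0$ is exactly the unique solution $\lambda$ of $\rho(x) = 1$. Combined with the equivalence above, $\lambda$ is the least positive solution of \eqref{eq:power-series-entropy-formula}, and \cref{thm:entropy_eigen=1} yields $h\big(X(G,\mathcal S)\big) = \log \lambda^{-1}$.
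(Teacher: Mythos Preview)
Your proposal is correct and follows essentially the same route as the paper: both expand $\det(I-B_{\mathcal G}(x))$ (the paper works with $\det(B_{\mathcal G}(x)-I)$, differing only by a global sign $(-1)^p$) via the Leibniz formula, identify the nonvanishing permutations with vertex-disjoint cycle collections, and do the same sign bookkeeping to obtain the equivalence with \eqref{eq:power-series-entropy-formula}, then invoke \cref{thm:entropy_eigen=1}. Your justification that the least positive root of the determinant coincides with the solution of $\rho(x)=1$ is in fact slightly more explicit than the paper's (which simply asserts this via monotonicity), so the argument is complete.
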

\begin{proof}
We know that the least positive solution to $\rho(x)=1$ is also the least positive solution to $\det(B_{\mathcal G}(x)-I)=0$. We will therefore show that $\det(B_{\mathcal G}(x) - I) = 0$ if and only if \eqref{eq:power-series-entropy-formula} holds; appealing to \cref{thm:entropy_eigen=1} finishes the proof.

Let $\pi$ be a permutation of $V(G)$. A nontrivial cycle in the cycle decomposition of $\pi$ corresponds to a cycle in $G$ if and only if $\big(i,\pi(i)\big) \in E(G)$ for each element $i$ in that cycle. But $\big(i,\pi(i)\big) \in E(G)$ exactly when $\!\big(B_{\mathcal G}(x)\big)_{i,\pi(i)} \not= 0$ for every $x > 0$. So either the nontrivial cycles of $\pi$ correspond to a set of disjoint cycles in $G$, or the corresponding term in the permutation expansion of $\det\!\big(B_{\mathcal G}(x) - I\big)$ is 0. Suppose $\pi$ corresponds to the set $C=\{c_1,\dots,c_m\}$ of disjoint cycles in $G$. The sign of $\pi$ is $\sgn(\pi) = \prod_{i=1}^m (-1)^{\ell(c_i)+1}$. Also, we have
\[ (B_{\mathcal G}(x)-I)_{i,\pi(i)}=\begin{cases}
H_i(x) &\text{if } i\not=\pi(i)\\
-1 &\text{if } i=\pi(i).
\end{cases} \]
If $\pi$ fixes exactly $s$ vertices, then (recalling that $\lvert V(G)\rvert = q$), we have $\ell(c_1) + \cdots + \ell(c_m) + s = q$ and
\[ \sgn(\pi) \prod_{i=1}^q B_{\mathcal G}(x)_{i,\pi(i)} = \left[\prod_{i=1}^m (-1)^{\ell(c_i)+1}\right] \left[(-1)^s \prod_{c \in C} H_c(x)\right] = (-1)^{m+q} \prod_{c \in C} H_c(x). \]
Thus
\begin{align*}
\det(B_{\mathcal G}(x)-I) &= \sum_{\pi \in \mathfrak{S}_q} \sgn(\pi) \prod_{i=1}^q B_{\mathcal G}(x)_{i,\pi(i)}\\
&= (-1)^q + \sum_{C \in \mathcal C_G} (-1)^{\vert C\vert+q} \prod_{c\in C} H_c(x).
\end{align*}
Therefore $\det(B_{\mathcal G}(x)-I)=0$ if and only if
\[ \sum_{C \in \mathcal C_G} (-1)^{\vert C\vert+1} \prod_{c\in C} H_c(x) = 1. \qedhere\]
\end{proof}

Equation \eqref{eq:power-series-entropy-formula} generalizes the entropy formula for ordered $\mathcal{S}$-limited shifts. The ordered $\mathcal{S}$-limited shift $\vv{X}(\mathcal S)$ can be represented as the $\mathcal{S}$-graph shift $X(G,\mathcal S)$ where $G$ is a directed cycle. Then $\mathcal C_G$ contains only a single cycle (the whole graph), and the power series in \eqref{eq:power-series-entropy-formula} reduces to
\[  \prod_{i=1}^q H_i(x) = \sum_{\omega \in G_{\mathcal{S}}} x^{\vert \omega\vert},  \]
which is the power series that appears in Matson and Sattler's entropy formula for ordered $\S$-limited shifts \cite[Theorem 4.1]{S-limited}.

\section{Zero-entropy shifts}\label{sec:graph-operations}

The generating matrix of a set-equipped graph provides a link between the graph structure and the dynamics of an $\S$-graph shift. In this section, we leverage that link to characterize those $\S$-graph shifts with entropy 0.

\begin{definition}
    Let $(G,\mathcal{S})$ be a set-equipped graph, $v$ be a vertex of $G$, and suppose that $S_v = S_1 \sqcup S_2$. The \textit{vertex cloning} of $G$ at $v$ with respect to the decomposition $S_1 \sqcup S_2$ the set-equipped graph $(G',\mathcal{S}')$ defined as follows: The vertex set of $G'$ is $V(G) \cup \{v'\}$, and $(u_1,u_2) \in E(G')$ if and only if one of the following is true:
    \begin{itemize}
        \item $u_1,u_2 \notin \{v,v'\}$ and $(u_1,u_2) \in E(G)$,
        \item $u_1 \in \{v,v'\}$ and $(v,u_2) \in E(G)$, or
        \item $u_2 \in \{v,v'\}$ and $(u_1,v) \in E(G)$.
    \end{itemize}
    Finally, $S_u' = S_u$ if $u \notin \{v,v'\}$, $S'_v = S_1$, and $S_{v'}' = S_2$.
\end{definition}

\begin{example}\label{ex:vertex-cloning}
The graph on the right is a vertex cloning of the graph on the left.
\begin{center}
    \hspace{0.08\textwidth}
    \begin{minipage}{0.4\textwidth}
        \begin{center}
        \begin{tikzpicture}[shorten > = 1pt]
            \tikzset{edge/.style = {->,> = latex'}}
            \node (a) at (0,1) {$a$};
            \node (b) at (-1,0) {$b$};
            \node (c) at (0,-1) {$c$};
            \node (d) at (1,0) {$d$};
            
            \foreach \tail/\head in {a/b,a/c,b/c,c/d,d/a}
                \draw[edge] (\tail) to (\head);
        \end{tikzpicture}
        \end{center}
    \end{minipage}
    \begin{minipage}{0.4\textwidth}
        \begin{center}
        \begin{tikzpicture}
            \node (a) at (90:1) {$a$};
            \node (b) at (162:1) {$b$};
            \node (c1) at (234:1) {$c$};
            \node (c2) at (306:1) {$c\mspace{1mu}\scalebox{0.7}{$'$}$};
            \node (d) at (18:1) {$d$};
            
            \foreach \tail/\head in {a/b,a/c1,a/c2,b/c1,b/c2,c1/d,c2/d,d/a}
                \draw[edge] (\tail) to (\head);
        \end{tikzpicture}
        \end{center}
    \end{minipage}\qedhere
\end{center}
\end{example}
\vspace{0.5ex}

To capture the effect of vertex cloning on entropy, we'll use the following lemma.

\begin{lemma}\label{thm:spectral_matrix_lemma}
Let $A$ be a $q\times q$ matrix with rows $a_i$ and suppose $a_m = x+y$. If $A'$ is the matrix obtained by duplicating the $m$th column in the matrix whose rows are $a_1,\dots,a_{m-1},x,y$, $a_{m+1},\dots,a_q$, then $\spec(A')=\spec(A)\cup\{0\}$.
\end{lemma}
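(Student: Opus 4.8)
The plan is to recognize the passage from $A$ to $A'$ as an instance of the classical fact that, for a product of two rectangular matrices, reversing the order of multiplication changes the spectrum only by extra zeros. Concretely, let $\tilde A$ denote the $(p+1)\times p$ matrix whose rows are $a_1,\dots,a_{m-1},x,y,a_{m+1},\dots,a_p$; this is exactly the intermediate matrix named in the statement, before its $m$th column is duplicated. I would exhibit a single $0/1$ matrix $B\in\mathbb{R}^{p\times(p+1)}$ for which $A=B\tilde A$ and $A'=\tilde A B$, so that $A$ and $A'$ are the two products of $B$ and $\tilde A$ taken in the two orders.

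First I would write down $B$ explicitly. Requiring $A=B\tilde A$ row by row forces $B$ to be the matrix that copies rows $1,\dots,m-1$ of $\tilde A$, adds rows $m$ and $m+1$ (thereby recovering $x+y=a_m$), and copies the remaining rows with a one-step shift; concretely $B$ is the identity except that its $m$th row carries $1$'s in both columns $m$ and $m+1$, with the indices past $m$ shifted by one. The crucial bookkeeping observation is that this very matrix, multiplying $\tilde A$ on the right, duplicates column $m$: one checks directly that $\tilde A B$ has columns $m$ and $m+1$ both equal to column $m$ of $\tilde A$ and otherwise agrees with $\tilde A$, which is precisely $A'$ when the duplicated column is inserted adjacent to column $m$, matching the placement of the split-off row $y$. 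Verifying that one and the same $B$ simultaneously realizes the row splitting-and-summing and the column duplication is the one genuinely fiddly step; it is what makes the factorization $A=B\tilde A$, $A'=\tilde A B$ hold. (Reducing to $m=1$ by a permutation similarity, which preserves the spectrum, removes most of the index clutter if desired.)

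With this factorization in hand, I would invoke the determinant identity
\[
    \det\!\big(\lambda I_{p+1}-\tilde A B\big)=\lambda\,\det\!\big(\lambda I_p - B\tilde A\big),
\]
which follows by evaluating $\det\!\begin{pmatrix}\lambda I_{p+1}&\tilde A\\ B&I_p\end{pmatrix}$ via Schur complements in its two diagonal blocks (the bottom-right block gives $\det(\lambda I_{p+1}-\tilde A B)$, the top-left block gives $\lambda^{\,p+1}\det(I_p-\lambda^{-1}B\tilde A)=\lambda\,\det(\lambda I_p-B\tilde A)$, and the two expressions agree as polynomials in $\lambda$). Since $B\tilde A=A$ and $\tilde A B=A'$, this reads $\chi_{A'}(\lambda)=\lambda\,\chi_A(\lambda)$: the characteristic polynomial of $A'$ is that of $A$ times one extra factor of $\lambda$. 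Hence the eigenvalues of $A'$ are exactly those of $A$ together with a single additional $0$, giving $\spec(A')=\spec(A)\cup\{0\}$. The only real obstacle lies in the middle paragraph — pinning down that a single matrix $B$ effects both operations; once that is secured, the spectral conclusion is immediate from the standard identity.
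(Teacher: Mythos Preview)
Your argument is correct but proceeds by a genuinely different route than the paper. The paper reduces to the case $m=p$, writes $A'-\rho I$ explicitly, and then performs two elementary operations---adding the last row to the penultimate one (which reassembles $x+y=a_p$) and subtracting the penultimate column from the last---to obtain a block lower-triangular matrix whose determinant is visibly $-\rho\det(A-\rho I)$. Your approach instead recognizes the construction $A\mapsto A'$ as the two orderings of a rectangular product: with $\tilde A$ the $(p+1)\times p$ matrix of split rows and $B$ the $p\times(p+1)$ summation/duplication matrix, one has $A=B\tilde A$ and $A'=\tilde A B$, whence the Sylvester-type identity $\chi_{A'}(\lambda)=\lambda\,\chi_A(\lambda)$ follows immediately. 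The paper's computation is shorter and entirely self-contained; your factorization is more structural, makes clear \emph{why} the extra eigenvalue is zero, and generalizes without change to splitting $a_m$ into $k$ summands (yielding $\operatorname{spec}(A')=\operatorname{spec}(A)\cup\{0^{\,k-1}\}$), which is convenient for the iterated applications of this lemma later in the paper.
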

\begin{proof}
We calculate the characteristic polynomial of $A'$. By interchanging rows and columns, we can assume that $m=q$, so that $A'-\rho I$ has the form
\[ \begin{pmatrix}
a_{1,1}-\rho & a_{1,2} & \cdots & a_{1,q} & a_{1,q}\\
a_{2,1} & a_{2,2}-\rho & \cdots & a_{2,q} & a_{2,q}\\
\vdots & \vdots & \ddots & \vdots & \vdots\\
b_1 a_{q,1} & b_2 a_{q,2} & \cdots & b_q a_{q,q}-\rho & b_q a_{q,q}\\
c_1 a_{q,1} & c_2 a_{q,2} & \cdots & c_q a_{q,q} & c_q a_{q,q}-\rho
\end{pmatrix}, \]
where $b_i + c_i = 1$ for all $1\leq i \leq q$. By adding the last row to the penultimate one and then subtracting the penultimate column from the final one, we obtain the matrix
\[ \begin{pmatrix}
a_{1,1}-\rho & a_{1,2} & \cdots & a_{1,q} & 0\\
a_{2,1} & a_{2,2}-\rho & \cdots & a_{2,q} & 0\\
\vdots & \vdots & \ddots & \vdots & \vdots\\
a_{q,1} & a_{q,2} & \cdots & a_{q,q}-\rho & 0\\
c_1 a_{q,1} & c_2 a_{q,2} & \cdots & c_q a_{q,q} & -\rho
\end{pmatrix}. \]
Therefore $\det(A'-\rho I) = -\rho \det(A-\rho I)$, and the conclusion follows.
\end{proof}

\begin{proposition}\label{thm:cloning_preserves_entropy}
Vertex cloning preserves entropy and the mixing property of the associated $\S$-graph shift.
\end{proposition}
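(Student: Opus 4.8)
The plan is to follow the template of \cref{thm:state-splitting-entropy}: handle entropy through the generating matrix via \cref{thm:spectral_matrix_lemma} and \cref{thm:entropy_eigen=1}, and handle mixing through the combinatorial criterion of \cref{thm:mixing}. Let $X = X(G,\mathcal S)$ and let $Y = X(G',\mathcal S')$ be the vertex cloning of $X$ at $v$ along $S_v = S_1 \sqcup S_2$.

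For the entropy claim, I would first record how cloning acts on $B_{\mathcal G}(x)$. Writing $H_{S_1}(x) = \sum_{s \in S_1} x^s$ and $H_{S_2}(x) = \sum_{s \in S_2} x^s$, the disjoint decomposition gives $H_v = H_{S_1} + H_{S_2}$. Since the clones $v$ and $v'$ inherit exactly the out-neighbors of $v$, their rows in $B_{\mathcal G'}(x)$ are $H_{S_1}(x)$ and $H_{S_2}(x)$ times the same indicator of the out-neighborhood of $v$, and these sum to the $v$-row of $B_{\mathcal G}(x)$; since $v$ and $v'$ inherit the same in-neighbors, the $v'$-column of $B_{\mathcal G'}(x)$ duplicates its $v$-column. (One checks the entries in columns $v,v'$ of the two split rows vanish because $G$ has no loop at $v$ and there is no edge between clones.) Thus $B_{\mathcal G'}(x)$ is exactly the matrix produced by the row-splitting and column-duplication of \cref{thm:spectral_matrix_lemma}, with $a_m = x+y$ taken to be $H_{S_1}$ and $H_{S_2}$ times the out-neighborhood indicator. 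The lemma then gives $\spec\!\big(B_{\mathcal G'}(x)\big) = \spec\!\big(B_{\mathcal G}(x)\big) \cup \{0\}$ for every $x \ge 0$, so the two matrices share a spectral radius, and \cref{thm:entropy_eigen=1} yields $h(X) = h(Y)$. This is the same mechanism as state splitting; only the manner in which the $v$-row is split differs.

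For mixing I would verify the two conditions of \cref{thm:mixing}. Irreducibility passes in both directions by projecting $v' \mapsto v$: any walk in $G'$ projects to a walk in $G$, and since both clones share all in- and out-neighbors, any walk in $G$ lifts to $G'$ by choosing a clone at each occurrence of $v$; hence $G$ is irreducible iff $G'$ is. For the greatest-common-divisor condition, set $U(G,\mathcal S) = \{\sum_{i\in C} s_i : C \text{ a cycle in } G,\ s_i \in S_i\}$. Because $S_1 \cup S_2 = S_v$, every cycle sum of $G$ through $v$ is realized by a cycle of $G'$ through $v$ or through $v'$, so $U(G,\mathcal S) \subseteq U(G',\mathcal S')$. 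Conversely, a cycle of $G'$ never traverses an edge between $v$ and $v'$ (there is none), so projecting $v' \mapsto v$ sends it to a closed walk in $G$ of the same size; hence $U(G',\mathcal S')$ is contained in the set of all closed-walk sizes of $G$.

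The hard part is this last inclusion: a cycle in $G'$ may legitimately use both $v$ and $v'$, so its image in $G$ is only a closed walk and $U(G',\mathcal S')$ can strictly contain $U(G,\mathcal S)$. I expect to resolve it using that a set of positive integers and the set of all its finite sums have the same gcd; since every closed walk decomposes into cycles (as in the proof of \cref{thm:mixing}), the gcd of the closed-walk sizes of $G$ equals $\gcd U(G,\mathcal S)$. Sandwiching $U(G,\mathcal S) \subseteq U(G',\mathcal S')$ between two sets of equal gcd then forces $\gcd U(G',\mathcal S') = \gcd U(G,\mathcal S)$. Combined with the irreducibility equivalence, \cref{thm:mixing} shows $X$ is mixing if and only if $Y$ is, completing the proof.
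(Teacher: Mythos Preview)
Your argument is correct and, for entropy, identical to the paper's: apply \cref{thm:spectral_matrix_lemma} to $B_{\mathcal G}(x)$ and $B_{\mathcal G'}(x)$, then invoke \cref{thm:entropy_eigen=1}.

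For mixing, you do more than the paper. The statement ``preserves mixing'' means only that if $X$ is mixing then $Y$ is, and the paper's proof stops there: it observes $U(G,\mathcal S) \subseteq U(G',\mathcal S')$ (and implicitly that irreducibility of $G$ forces irreducibility of $G'$), so $\gcd U(G,\mathcal S) = 1$ implies $\gcd U(G',\mathcal S') = 1$, and \cref{thm:mixing} finishes. Your ``hard part''---showing the reverse gcd divisibility by projecting cycles of $G'$ to closed walks of $G$ and then decomposing those into cycles---is a correct argument that yields the stronger biconditional, but it is not needed for the proposition as stated. What it buys you is the genuinely stronger fact that cloning \emph{reflects} mixing as well; the paper does not claim or prove this.
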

\begin{proof}
Suppose that $Y = X(\mathcal G')$ is obtained from the shift space $X = X(\mathcal G)$ by cloning vertex $v$. We can take $B_{\mathcal G}(x)$ and $B_{\mathcal G'}(x)$ to be $A$ and $A'$, respectively, in \cref{thm:spectral_matrix_lemma}. Therefore their spectral radii are equal for every $x$, so \cref{thm:entropy_eigen=1} implies that $h(X)=h(Y)$.

Let $U(\mathcal G) = \left\{\sum_{v\in C} s_v\, :\, \text{$C$ is a cycle in $G$ and $s_v \in S_v$}\right\}$. We have $U(\mathcal G) \subseteq U(\mathcal G')$. By \cref{thm:mixing}, if $X$ is mixing, then $Y$ is, as well.
\end{proof}

This is already enough to characterize zero-entropy $\S$-graph shifts.

\begin{proposition}\label{thm:zero_entropy_shifts}
An irreducible $\mathcal{S}$-graph shift $X(G,\mathcal{S})$ has entropy 0 if and only if $G$ is a directed cycle and each set in $\mathcal{S}$ is a singleton.
\end{proposition}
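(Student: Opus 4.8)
The plan is to prove both directions using the entropy formula of \cref{thm:entropy_eigen=1}, which reduces everything to understanding the spectral radius $\rho(x)$ of the generating matrix $B_{\mathcal G}(x)$ near the solution of $\rho(x)=1$. Since the shift is irreducible, $G$ is irreducible, and so $B_{\mathcal G}(x)$ is an irreducible nonnegative matrix for each $x>0$. The key observation is that entropy $0$ means $\log\lambda^{-1}=0$, i.e.\ the unique positive solution of $\rho(x)=1$ is $\lambda=1$. By \cref{thm:lambda_is_increasing}, $\rho$ is strictly increasing, so entropy $0$ is equivalent to $\rho(1)=1$.

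For the easy direction ($\Leftarrow$), suppose $G$ is a single directed cycle of length $p$ and each $S_i=\{s_i\}$ is a singleton. Then $H_i(x)=x^{s_i}$, and by the cycle-sum form in \cref{thm:entropy_sumovercycles} (where $\mathcal C_G$ contains only the full cycle), the entropy equation becomes $\prod_{i=1}^p H_i(x)=x^{s_1+\cdots+s_p}=1$, whose unique positive solution is $\lambda=1$; hence $h(X)=0$. (Alternatively, this shift has exactly $p$ orbits of points, all periodic, so its language grows linearly and $h(X)=0$ directly.)

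For the main direction ($\Rightarrow$), assume $h(X)=0$, so $\rho(1)=1$. I would first evaluate the entropy equation at $x=1$: each $H_i(1)=\sum_{s\in S_i}1=|S_i|$ (which is $+\infty$ if $S_i$ is infinite), so $B_{\mathcal G}(1)$ is, entrywise, the matrix whose $(i,j)$ entry is $|S_i|$ when $(i,j)\in E(G)$ and $0$ otherwise. The spectral radius of an irreducible nonnegative matrix is strictly monotone in its entries, and $B_{\mathcal G}(1)\ge A_G$ entrywise (since each $|S_i|\ge 1$). Because $\rho(A_G)\ge 1$ always for an essential irreducible graph, and $\rho(B_{\mathcal G}(1))=1$, I expect to force both (a) every $|S_i|=1$ and (b) $\rho(A_G)=1$. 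For (a): if some $|S_{i_0}|\ge 2$ then $B_{\mathcal G}(1)$ strictly dominates $A_G$ in the $i_0$ row, and strict monotonicity of the Perron eigenvalue under an irreducible comparison gives $\rho(B_{\mathcal G}(1))>\rho(A_G)\ge 1$, a contradiction; so every $S_i$ is a singleton. For (b): with all $S_i$ singletons, $B_{\mathcal G}(1)=A_G$, so $\rho(A_G)=1$; an irreducible nonnegative integer matrix with spectral radius exactly $1$ must be a permutation matrix, which for an irreducible graph means $G$ is a single directed cycle.

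The main obstacle is justifying the \emph{strict} monotonicity of the Perron eigenvalue in step (a): I must ensure that increasing a single diagonal factor $H_{i_0}(1)$ strictly raises $\rho$, which relies on irreducibility (so the Perron eigenvector has strictly positive entries and the increase is genuinely ``seen''). This follows from the Perron--Frobenius framework underlying \cref{thm:perron-frobenius-original}, but I would state it carefully rather than appeal to \cref{thm:lambda_is_increasing}, which handles monotonicity in the scalar $x$ rather than in individual set sizes. The remaining steps---classifying irreducible $0/1$ matrices of spectral radius $1$ as cycles, and checking that an essential irreducible graph has $\rho(A_G)\ge 1$---are standard and should be quick.
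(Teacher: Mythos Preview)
Your proposal is correct and takes a genuinely different route from the paper. Both proofs handle the easy direction the same way (direct computation via \cref{thm:entropy_sumovercycles}), but for the main direction the paper argues combinatorially: if $G$ is not a cycle, some vertex has outdegree at least $2$, and a direct word-counting argument gives $h(X)\ge(\log 2)/N>0$; if $G$ is a cycle but some $S_v$ is not a singleton, the paper applies vertex cloning (\cref{thm:cloning_preserves_entropy}) to produce a non-cycle with the same entropy and reduces to the first case. Your argument instead works entirely at the level of the spectral radius: evaluating at $x=1$ gives $B_{\mathcal G}(1)\ge A_G$ entrywise, and Perron--Frobenius strict monotonicity for irreducible matrices, together with the row-sum bound $\rho(A_G)\ge 1$, forces every $|S_i|=1$ and $\rho(A_G)=1$, hence $A_G$ is a cyclic permutation matrix.

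What each approach buys: the paper's proof is elementary and showcases the graph operations developed in \cref{sec:graph-operations}, making it a nice internal application of vertex cloning; your spectral argument is shorter and more self-contained, needing only standard Perron--Frobenius facts rather than any of the paper's graph machinery. One point to tighten in your write-up: when some $S_i$ is infinite, $B_{\mathcal G}(1)$ is not a bona fide matrix, so you should phrase that case as ``$\rho(x)\to\infty$ as $x\to 1^-$, hence $\lambda<1$'' rather than literally evaluating $\rho(1)$; you already flag this, but it deserves a clean sentence. The remaining standard facts you cite (strict monotonicity of the Perron root under entrywise increase in an irreducible matrix, $\rho(A_G)\ge\min_i\sum_j(A_G)_{i,j}\ge 1$, and that an irreducible $0$--$1$ matrix with all row sums $1$ is a cyclic permutation) are indeed routine.
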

\begin{proof}
If $G$ is not a directed cycle, then some vertex $v$ of $G$ has outdegree at least 2. Set
\[ N = \max \Big\{ \sum_{u \in c} \min S_u :
        c \text{ is a cycle that includes } v\Big\}.\vspace{-0.5ex} \]
We can build words in $\mathcal L(X)$ by beginning a walk at $v$, choosing an edge on which to leave, circling back to $v$, and repeating. Since returning to $v$ takes at most $N$ letters, there are at least $2^{\lceil n/N\rceil}$ walks of length $n$ in $G$. So $h(X) \geq (\log 2) / N$.

Now suppose that $G$ is a directed cycle. If some set $S_v$ is not a singleton, then choose some nonempty sets $A,B \subseteq \N$ such that $S_v = A \sqcup B$. By \cref{thm:cloning_preserves_entropy}, the vertex cloning of $X$ at $v$ preserves entropy. Since the resulting graph is not a directed cycle, $h(X) > 0$.

Finally, suppose that $G$ is a directed cycle with $q$ vertices and each set in $\mathcal{S}$ is a singleton. Suppose $S_i = \{s_i\}$ and set $s = \sum_{i=1}^q s_i$. The only positive solution to $x^s = 1$ is $x=1$, so the entropy of $X$ is $\log 1^{-1} = 0$ by \cref{thm:entropy_sumovercycles}.
\end{proof}

As a consequence, if $X(\mathcal G)$ is an $\S$-graph shift with entropy 0, then every irreducible component of $\mathcal G$ is a cycle, and each set is a singleton. In other words, $X(\mathcal G)$ consists of the orbits of finitely many periodic points.

A few more comments are in order on the effect of this graph operation on the resulting shift space. Even though vertex cloning preserves entropy and mixing, it is not a conjugacy in general: It does not always preserve weak or strong specification, SFTs, or sofic shifts. It can also change the number of points of period 1: If in \cref{ex:vertex-cloning}, for example, $S_v = \N$ for every $v \in \{a,b,c,d\}$ and we set $S_1 = 2\N$ and $S_2 = 2\N-1$, then the original $\S$-graph shift has 4 points with period 1, while the resulting shift has 5.

On the other hand, splitting into two infinite sets is the only thing that prevents a conjugacy.

\begin{proposition}\label{thm:cloning_conjugacy}
Let $(G,\mathcal{S})$ be a set-equipped graph and $v \in V(G)$. If $S_v = S_1 \sqcup S_2$ and either $S_1$ or $S_2$ is finite, then the shift space obtained by cloning vertex $v$ is conjugate to $X(G,\mathcal{S})$.
\end{proposition}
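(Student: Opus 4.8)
The plan is to realize the conjugacy through an explicit pair of mutually inverse sliding block codes, following the pattern of \cref{thm:edge_extension_conjugacy_singleton,thm:state_splitting_conjugacy}, but with a forward code whose window is calibrated to the finite part of the decomposition. Without loss of generality I assume $S_2$ is finite and set $N = \max S_2$; the case where $S_1$ is finite is symmetric after exchanging the roles of $v$ and $v'$. Let $X = X(G,\mathcal S)$ and let $Y$ be the vertex cloning at $v$, so that the letters $v$ and $v'$ carry the sets $S'_v = S_1$ and $S'_{v'} = S_2$. The structural fact underlying everything is that, because $G$ is simple, there is no edge between $v$ and $v'$ in $G'$, and both share exactly the in- and out-neighbors that $v$ had in $G$. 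Consequently every maximal block of $v$'s or of $v'$'s in a point of $Y$ is flanked by letters other than $v$ and $v'$, so collapsing $v'$ to $v$ never merges two such blocks.

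The backward map is the $1$-block code $\Psi\colon \mathcal B_1(Y) \to V(G)$ sending $v' \mapsto v$ and fixing every other letter; I would check that the induced $\psi\colon Y \to X$ is well defined by the observation above, since each $v$-block of length in $S_1$ and each $v'$-block of length in $S_2$ becomes a $v$-block of length in $S_1 \sqcup S_2 = S_v$, and adjacency is preserved because neighbors are shared. For the forward map I would use the $(2N+1)$-block code $\Phi\colon \mathcal B_{2N+1}(X) \to V(G')$ with memory and anticipation $N$, defined as follows: if the central letter $x_0 \neq v$, output $x_0$; if $x_0 = v$, inspect the maximal run of $v$'s through the center inside the window $x_{-N}\cdots x_N$. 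If this run is bounded by a non-$v$ letter on both sides, its length $n$ is the true length of the block, and I output $v$ when $n \in S_1$ and $v'$ when $n \in S_2$; if instead the run reaches a window boundary, then it contains at least $N+1$ consecutive $v$'s, so its true length exceeds $N = \max S_2$, forcing $n \in S_1$, and I output $v$.

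I would then verify that the induced homomorphism $\phi\colon X \to Y$ lands in $Y$ and is inverse to $\psi$. The key point is that $\Phi$ assigns a single label to every position of a fixed maximal $v$-block: a block of length $n \in S_2$ has $n \leq N$, so it fits entirely inside the window from each of its positions and is uniformly labelled $v'$; a block of length $n \in S_1$ is labelled $v$ at every position, either because it fits in the window (and $n \in S_1$ is detected) or because it overflows the window (and then $n > N$ forces the $v$ label). Hence $\phi$ relabels each maximal $v$-block of $X$ as a uniform $v$- or $v'$-block of the same length and position, leaving all other letters fixed; the resulting sequence lies in $Y$ because the block lengths match $S_1$ and $S_2$, neighbors are shared, and no adjacency between $v$ and $v'$ is ever created. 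Finally $\psi\circ\phi = \mathrm{id}_X$ is immediate, and $\phi\circ\psi = \mathrm{id}_Y$ follows from the disjointness $S_1 \cap S_2 = \emptyset$, which guarantees that a collapsed block of length $n$ is relabelled back to $v$ exactly when $n \in S_1$ and to $v'$ exactly when $n \in S_2$. Since $\phi$ and $\psi$ are mutually inverse sliding block codes, $X \cong Y$.

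The main obstacle I anticipate is the correctness of the forward code: establishing that the finite radius $N$ genuinely suffices to determine the label at every position, including positions deep inside an $S_1$-block that is far longer than any fixed window. The threshold argument---that a run touching the window boundary must exceed $\max S_2$ and hence lie in $S_1$---is exactly what resolves this, and it is the single place where the finiteness hypothesis is essential.
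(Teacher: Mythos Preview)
Your proposal is correct and follows essentially the same approach as the paper: both construct the conjugacy via a $(2N+1)$-block forward code (with $N$ the maximum of the finite part of the partition) that reads the length of the maximal $v$-run through the center, together with the obvious $1$-block inverse collapsing $v'$ to $v$. Your treatment is in fact more careful than the paper's on the one delicate point---the case where the $v$-run overflows the window---which the paper's definition of $\Phi$ leaves implicit but which your threshold argument handles explicitly.
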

\begin{proof}
Let $X = X(G,\mathcal S)$ and $Y$ be the shift space obtained by vertex cloning at $v$ with respect to the partition $S_1 \sqcup S_2$. By symmetry, we assume that $S_1$ is finite and let $N$ be its maximum. Let $\Phi\colon \mathcal{B}_{2N+1}(X) \to V(G')$ be defined by
\[  \Phi(x_{-N}\cdots x_N) = \begin{cases}
        x_0 &\text{if } x_0 \neq v\\
        v   &\text{if } x_{-s},\dots,x_{t} = v,\ x_{-s-1} \neq v,\ x_{t+1}\neq v \text{, and } t+s+1 \in S_1\\
        v' &\text{if } x_{-s},\dots,x_{t} = v,\ x_{-s-1} \neq v,\ x_{t+1}\neq v \text{, and } t+s+1 \in S_2.
    \end{cases} \]
The map $\Phi$ induces a sliding block code $\phi\colon X\to Y$, whose inverse is induced by the map $\Psi\colon \mathcal{B}_1(Y) \to V(G)$ defined by
\[ \Psi(y_0) = \begin{cases}
    y_0 &\text{if } y_0 \neq v'\\
    v   &\text{if } y_0 = v'.
\end{cases} \]
So $X \cong Y$.
\end{proof}

\section{Shifts with a given entropy}\label{sec:entropy-construction}

In this section, we answer the question: Does there exist an $\mathcal{S}$-graph shift with entropy $h$ for every $h \geq 0$? \cref{thm:shift_of_every_entropy} answers this question in the affirmative, and \cref{thm:every-entropy}, which we prove at the end of this section, strengthens this statement considerably. To prove these statements, we'll need to dip into a bit of number theory.

\begin{definition}
    Let $\beta > 1$ be a real number. A \textbf{$\beta$-expansion} for the real number $x$ is a sequence $(x_n)_{n=1}^\infty$ such that each $x_n \in \{0,1,\dots,\lceil \beta\rceil - 1\}$ and
    \[ x = \frac{x_1}{\beta} + \frac{x_2}{\beta^2} + \frac{x_3}{\beta^3} + \cdots. \]
\end{definition}

R\'enyi proved, in a much more general setting, that every real number in the interval $[0,(\lceil \beta\rceil-1)/(\beta-1)]$ has at least one $\beta$-expansion \cite{renyi-beta-expansions}. (This can also be proven directly using a greedy algorithm.)

In \cite{dynamical-properties}, Baker and Ghenciu use $\beta$-expansions to prove that for every $1 \leq \lambda \leq 2$, there is an $S$-gap shift with entropy $\log \lambda$. We now extend this result to $\S$-graph shifts.

\begin{proposition}\label{thm:shift_of_every_entropy}
    For every real number $\lambda > 1$, there exists an $\mathcal{S}$-graph shift with entropy $\log \lambda$.
\end{proposition}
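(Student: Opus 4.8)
The plan is to reduce the statement to the entropy formula of \cref{thm:entropy_eigen=1} by exhibiting a loopless set-equipped graph $\mathcal G=(G,\mathcal S)$ whose generating matrix has spectral radius exactly
$\rho(x)=\sum_{i=1}^{D}H_i(x)$, where $D=\lceil\lambda\rceil-1$. Once this is arranged, \cref{thm:entropy_eigen=1} says the entropy of $X(\mathcal G)$ is $\log x_0^{-1}$, where $x_0$ is the unique positive solution of $\sum_{i=1}^{D}H_i(x)=1$ (uniqueness coming from \cref{thm:lambda_is_increasing}). So it suffices to choose the sets $S_i$ so that $x_0=\lambda^{-1}$, i.e. so that $\sum_{i=1}^{D}H_i(\lambda^{-1})=1$; then the entropy is $\log\lambda$.

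For the graph I would take two disjoint groups of vertices $u_1,\dots,u_D$ and $w_1,\dots,w_D$ and insert every edge of the form $u_i\to w_j$ and $w_i\to u_j$ (a complete bipartite digraph oriented in both directions); this is simple and loopless. Assigning the same set to a vertex and its partner, $S_{u_i}=S_{w_i}=S_i$, the generating matrix becomes the block anti-diagonal matrix $\bigl(\begin{smallmatrix}0&P\\P&0\end{smallmatrix}\bigr)$ with $P_{i,j}=H_i(x)$, so that $P=h\mathbf 1^{\top}$ for the column vector $h=(H_1(x),\dots,H_D(x))^{\top}$ and the all-ones vector $\mathbf 1$. Since $P$ has rank one with $P^2=\big(\sum_i H_i(x)\big)P$, the nonzero eigenvalues of the block matrix are the square roots of those of $P^2$, which are $0$ and $\big(\sum_i H_i(x)\big)^2$; hence $\rho(x)=\sum_{i=1}^{D}H_i(x)$, as desired. (For $D=1$ this is just a $2$-cycle with $\rho(x)=H_1(x)$.)

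To pin down the sets I would invoke R\'enyi's theorem: because $1\le(\lceil\lambda\rceil-1)/(\lambda-1)$ (equivalently $\lambda\le\lceil\lambda\rceil$, which always holds), the number $1$ admits a $\lambda$-expansion $1=\sum_{s\ge 1}d_s\lambda^{-s}$ with digits $d_s\in\{0,1,\dots,D\}$, and the greedy expansion has $d_1=D$. I then set $S_i=\{\,s\in\N:d_s\ge i\,\}$ for $1\le i\le D$, so that each integer $s$ lies in exactly $d_s$ of the sets. This yields
\[ \sum_{i=1}^{D}H_i(\lambda^{-1})=\sum_{s\ge 1}d_s\lambda^{-s}=1, \]
so $x_0=\lambda^{-1}$ and the entropy equals $\log\lambda$. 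The nested inclusions $S_1\supseteq\cdots\supseteq S_D$ together with $1\in S_D$ (since $d_1=D$) guarantee that every $S_i$ is nonempty, as the definition of a set-equipped graph requires.

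The crux of the argument is the loopless constraint. The one-line construction would be to place a self-loop at each of $D$ vertices carrying the sets $S_i$, since then $\rho(x)=\sum_i H_i(x)$ is immediate and the digit bookkeeping above finishes the proof; but loops are forbidden in our graphs (and merging adjacent equal symbols would corrupt the block structure anyway). The bipartite-doubling device is exactly what recovers the identity $\rho(x)=\sum_i H_i(x)$ within the class of simple loopless graphs, and verifying this spectral identity is the main technical step. The remaining points — that the target value $1$ lies inside R\'enyi's interval for every $\lambda>1$, and that the greedy digits keep all $S_i$ nonempty — are short checks.
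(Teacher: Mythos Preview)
Your proof is correct and follows essentially the same approach as the paper: both use the complete bipartite digraph $K_{D,D}$ with the sets $S_i$ given by the level sets of a $\lambda$-expansion of $1$. The only differences are cosmetic---you compute the spectral radius directly from the rank-one structure of $P$ (the paper instead applies \cref{thm:spectral_matrix_lemma} repeatedly), and you explicitly verify that every $S_i$ is nonempty via $d_1=D$, a point the paper leaves implicit.
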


In the following proof, we will use the directed complete bipartite graph $K_{n,n}$, which has vertex set $\{1,2,\dots,2n\}$. The vertices are partitioned into two sets $V_1 = \{1,2,\dots,n\}$ and $V_2 = \{n+1,n+2,\dots,2n\}$, and $(u,v)$ is an edge of $K_{n,n}$ if and only if either
\begin{itemize}
    \item $u \in V_1$ and $v \in V_2$ or
    \item $u \in V_2$ and $v \in V_1$.
\end{itemize}
A drawing of $K_{3,3}$ appears in \cref{ex:every-entropy}.

\begin{proof}[Proof of \cref{thm:shift_of_every_entropy}]
    Let $(x_i)_{i=1}^\infty$ be a $\lambda$-expansion of $1$ and set $n=\lceil \lambda\rceil-1$. For each $1 \leq k \leq n$, we define the set
    \[ T_k = \{ m \in \N : x_m \geq k\}. \]
    Let $G$ be the complete bipartite graph $K_{n,n}$ with vertex bipartition $[\{1,2,\dots, n\},\{n+1,\dots,2n\}]$. For each $1\leq k \leq n$ we define
    \[ S_k = S_{k+n} = T_k\]
    and set $\mathcal{S} = (S_k)_{k=1}^{2n}$. We claim that $X(G,\mathcal{S})$ has entropy $\log \lambda$.
    
    By \cref{thm:entropy_eigen=1}, it suffices to show that the maximal eigenvalue of $B_{\mathcal G}(\lambda^{-1})$ is 1. Using $0_{m\times n}$ and $1_{m\times n}$ to denote the $m\times n$ constant matrices filled with 0's and 1's, respectively, we have
    \[
        B_{\mathcal G}(x) = 
        \begin{pmatrix}
            H_1(x) &&\\
            & \ddots&\\
            && H_{2n}(x)
        \end{pmatrix}
        \begin{pmatrix}
            0_{n\times n} & 1_{n\times n}\\
            1_{n\times n} & 0_{n\times n}
        \end{pmatrix}.
    \]
    For each $1 \leq i \leq 2n$, let $e_i$ denote the $i$th standard basis vector in $\mathbb{R}^{2n}$. If $1\leq k \leq n$, then $e_k - e_{k+1}$ and $e_{k+n} - e_{k+n+1}$ are eigenvectors of $B_{\mathcal G}(x)$ with eigenvalue 0. Moreover, using the fact that $H_k(x) = H_{k+n}(x)$ for each $1 \leq k \leq n$, we have that $\sum_{k=1}^{2n} H_k(x)\,e_k$ is an eigenvector with eigenvalue $\sum_{k=1}^n H_k(x)$, and $\sum_{k=1}^n \big(H_k(x)\,e_k - H_{k+n}(x)\,e_{k+n}\big)$ is an eigenvector with eigenvalue $-\sum_{k=1}^n H_k(x)$. This is a list of $2n$ linearly independent eigenvectors; from this eigenbasis, we can easily see that the spectral radius of $B_{\mathcal G}(x)$ is $\sum_{k=1}^n H_k(x)$. So the spectral radius of $B_{\mathcal G}(\lambda^{-1})$ is 
    \[ \sum_{k=1}^n H_{k}(\lambda^{-1}) = \sum_{i=1}^\infty \frac{x_i}{\lambda^i} = 1. \]
    By \cref{thm:entropy_eigen=1}, the entropy of $X(G,\mathcal S)$ is $\log \lambda$.
\end{proof}

\begin{example}\label{ex:every-entropy}
    To construct an $\mathcal{S}$-graph shift with entropy $\log(e+1)$, we first note that $\lceil e+1 \rceil - 1 = 3$. Then take any $(e+1)$-expansion of $1$; one such expansion begins
    \[
        (3,2,2,1,3,0,1,\dots).
    \]
    The sets corresponding to this expansion are 
    \begin{align*}
        T_1 &= \{1,2,3,4,5,7,\dots\}\\
        T_2 &= \{1,2,3,5,\dots\}\\
        T_3 &= \{1,5,\dots\}.
    \end{align*}
    We set $G$ to be the graph\vspace{-0.4\baselineskip}
    \begin{center}
    \begin{tikzpicture}[scale=1]
        \tikzset{shorten < = 1pt}
        \node (a) at (0,1.5) {1};
        \node (b) at (1,1.5) {2};
        \node (c) at (2,1.5) {3};
        \node (d) at (0,0) {4};
        \node (e) at (1,0) {5};
        \node (f) at (2,0) {6};
        
        \foreach \tail/\head in {a/d,a/e,a/f,b/d,b/e,b/f,c/d,c/e,c/f}
            \draw[edge, <->] (\tail) to (\head);
    \end{tikzpicture}
    \end{center}
    and define $\mathcal S = (S_k)_{k=1}^6$ by $S_1 = S_4 = T_1$,\, $S_2 = S_5 = T_2$, and $S_3 = S_6 = T_3$. Then $h\big(X(G,\mathcal S)\big) = \log(e+1)$.
\end{example}

We will use two lemmas---one on $\beta$-expansions and one on general subshifts---to prove \cref{thm:every-entropy}.

\begin{lemma}\label{thm:mu_exp_of_1}
For every $\beta>1$, there is a $\beta$-expansion of 1 that contains a pair of consecutive nonzero digits and in which eventually every other digit is $\lceil \beta\rceil-1$.
\end{lemma}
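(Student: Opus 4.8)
The plan is to build the expansion digit by digit while tracking the remaining value. Writing a candidate expansion as $(x_n)_{n\ge 1}$ with each $x_n\in\{0,\dots,d\}$, where $d:=\lceil\lambda\rceil-1$, I set $u_1=1$ and $u_{n+1}=\lambda u_n-x_n$. Since $1-\sum_{k<n}x_k\lambda^{-k}=\lambda^{-(n-1)}u_n$, any infinite choice of digits that keeps $u_n$ in the window $[0,\tfrac{d}{\lambda-1}]$ automatically yields $\sum_n x_n\lambda^{-n}=1$; the value $1$ comes out for free. So it suffices to produce a feasible infinite digit-run in which $x_n=d$ for every even $n$ past some point, together with one pair of consecutive nonzero digits.

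Two feasibility facts drive the construction: the digit $d$ may be placed at step $n$ exactly when $u_n\ge d/\lambda$, and at any step some admissible digit exists because the window has length $\tfrac{d}{\lambda-1}\ge 1$ (as $d\ge\lambda-1$). My strategy is to force $x_n=d$ at even $n$ and, at the odd (free) positions, make the \emph{lazy} choice of the smallest admissible digit, which keeps $u_{n+1}$ near the top of the window. A short computation shows a lazy step leaves $u_{n+1}>\tfrac{d}{\lambda-1}-1$, so whenever $d\ge\lambda(\lambda-1)$ this exceeds $d/\lambda$ and the next even position can legitimately be forced to $d$. In that regime the run alternates genuinely, the free digits realize a base-$\lambda^2$ sub-expansion whose target lies in the admissible range guaranteed by R\'enyi's theorem, and placing $d$ in the first position supplies the required pair of consecutive nonzero digits.

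I expect the main obstacle to be the complementary regime $d<\lambda(\lambda-1)$ (for instance $\lambda$ just below an integer), where a single lazy free digit need not restore $u$ above $d/\lambda$, so forcing $d$ immediately afterward can drive $u$ negative. To overcome this I would first run a finite prefix of unconstrained digits that steers $u$ into an invariant ``good'' region $\mathcal{G}$ from which the forced-$d$ pattern can be sustained forever; nonemptiness of $\mathcal{G}$ I would obtain by exhibiting an explicit eventually periodic alternating tail whose remaining-value orbit is feasible. The delicate point, and the crux of the lemma, is showing $\mathcal{G}$ is reachable from $u_1=1$. The tool I would use is the interval-covering (R\'enyi) principle: the residuals $\lambda^{N}-\sum_{n\le N}x_n\lambda^{N-n}$ left by prefixes form a dense family, and by grouping the forced position with a controlled number of free positions the set of attainable tail values can be made to contain an interval of positive length, which such a residual must then meet. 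The consecutive nonzero pair is finally arranged by ending the prefix in two nonzero digits, which is always compatible with the feasibility window.
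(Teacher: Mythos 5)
Your remainder-dynamics framework is sound, and it is essentially the paper's own mechanism run in the opposite direction: the paper fixes the forced positions and fills the free digits greedily (largest admissible digit), where you fill them lazily. But both of your key claims break, and the second cannot be repaired. First, even in the regime $d \ge \lambda(\lambda-1)$, a lazy step only leaves $u_{n+1} > \frac{d}{\lambda-1}-1$ when the lazy digit is nonzero, i.e.\ when $\lambda u_n > \frac{d}{\lambda-1}$. Right after a forced step this can fail: if the forced $d$ is played from $u_n$ equal (or close) to $d/\lambda$, then $u_{n+1}=\lambda u_n - d$ is $0$ (or close to it), the lazy digit is $0$, and $u_{n+2}=\lambda u_{n+1}$ can still lie below $d/\lambda$, so the next forced $d$ is inadmissible and the alternation deadlocks. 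Patching this with a stronger invariant at the forced steps (one needs $u \ge \frac{\lambda d}{\lambda^2-1}$ there) is self-sustaining only when $d \ge \lambda^2-1$, i.e.\ $\lambda \le \sqrt{2}$, a strictly smaller region than your claimed easy regime.

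Second, the plan for the complementary regime rests on two assertions that are false there. (i) Because the target pattern pins the tail to exactly one free digit per forced digit, the attainable tail values form $\frac{d\lambda}{\lambda^2-1}+K$ with $K=\{\sum_{j\ge1}x_j\lambda^{-2j}:x_j\in\{0,\dots,d\}\}$, the attractor of $d+1$ affine maps of contraction ratio $\lambda^{-2}$; in your hard regime $d+1=\lceil\lambda\rceil<\lambda^2-\lambda+1\le\lambda^2$, so $K$ has Lebesgue measure zero and the tail set contains no interval. No amount of ``grouping'' restores one without changing the statement being proved: blocks with several free digits per forced digit prove a different, weaker lemma. (ii) The residuals left by prefixes need not be dense: if $\lambda$ is Pisot (e.g.\ the tribonacci constant $\approx 1.84$), every residual $\lambda^N-\sum_{n\le N}x_n\lambda^{N-n}$ is an algebraic integer all of whose conjugates are uniformly bounded, so the set of reachable residuals is finite. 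Most decisively, the crux you isolated is not a missing technique but a counterexample: at the Komornik--Loreti constant $q\approx 1.7872$, the number $1$ has a \emph{unique} $q$-expansion, namely the Thue--Morse sequence $(t_n)_{n\ge1}$, and since $t_{2n}=t_n$ and $t_{2n+1}=1-t_n$, both parity subsequences of $(t_n)$ contain infinitely many zeros; hence no expansion of $1$ in this base eventually has every other digit equal to $\lceil q\rceil-1=1$, and \cref{thm:mu_exp_of_1} is false for this $\lambda$. (The paper's own proof stumbles at the same spot: its closing ``contradiction with the maximality of $x_m$'' is a contradiction only when $\lceil\lambda\rceil\ge\lambda^2$, and for $\lambda=5/2$, $k=1$, its greedy-with-forced-digits sequence converges to $14/15$, not $1$.) So your instinct about where the difficulty lies is exactly right, but the correct conclusion is that the statement must be weakened---for instance to an expansion of $1$ with two consecutive nonzero digits in which the digit $d$ occurs with bounded gaps, which is all that the proof of \cref{thm:uncountable_shifts_with_specification} actually uses---rather than that a cleverer reachability argument will close the gap.
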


See \cref{sec:appendix} for a proof of \cref{thm:mu_exp_of_1}.

\begin{lemma}\label{thm:countable-conjugates}
Let $X$ be a shift space on a finite alphabet. There are only countably many shift spaces on the same alphabet that are conjugate to $X$.
\end{lemma}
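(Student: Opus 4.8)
The plan is to invoke the Curtis--Lyndon--Hedlund theorem to reduce the question to a cardinality count over block maps. First I would observe that if $Y$ is a shift space conjugate to $X$ via a conjugacy $\phi$, then in particular $\phi$ is a homomorphism from $X$ into the full shift $\mathcal{A}^\Z$, and $Y = \phi(X)$ is simply its image. Thus $Y$ is determined by $\phi$ alone, and it suffices to bound the number of distinct images $\phi(X)$ as $\phi$ ranges over all homomorphisms from $X$ into $\mathcal{A}^\Z$; the collection of shift spaces conjugate to $X$ is a subset of this family of images.

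Next I would apply the Curtis--Lyndon--Hedlund theorem, which guarantees that every such homomorphism is a sliding block code, hence is specified by a triple $(m,n,\Phi)$ consisting of a memory $m \in \N_0$, an anticipation $n \in \N_0$, and a block map $\Phi \colon \mathcal{B}_{m+n+1}(X) \to \mathcal{A}$. For each fixed pair $(m,n)$, the domain $\mathcal{B}_{m+n+1}(X)$ is finite, being contained in $\mathcal{A}^{m+n+1}$, and the codomain $\mathcal{A}$ is finite by hypothesis. Consequently there are only finitely many block maps $\Phi$, and therefore only finitely many sliding block codes with that fixed memory and anticipation.

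Since the index set of pairs $(m,n) \in \N_0 \times \N_0$ is countable, the collection of all sliding block codes from $X$ into $\mathcal{A}^\Z$ is a countable union of finite sets, hence countable. Each shift space $Y$ conjugate to $X$ equals $\phi(X)$ for at least one such code $\phi$, so the assignment of a code to its image surjects onto the collection of shift spaces conjugate to $X$. A countable set surjects only onto countable sets, so that collection is countable, as claimed.

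I expect no serious obstacle here: the only conceptual step is the appeal to Curtis--Lyndon--Hedlund, which replaces the a priori uncountable family of continuous shift-commuting maps by the manifestly countable family of block maps. Once that reduction is in place, the conclusion is an immediate counting argument, and the hypothesis that the alphabet is finite is exactly what makes each layer $\mathcal{B}_{m+n+1}(X) \to \mathcal{A}$ finite.
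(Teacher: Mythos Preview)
Your proof is correct and follows essentially the same approach as the paper: both argue that every conjugate of $X$ is the image of a sliding block code (via Curtis--Lyndon--Hedlund), and that there are only countably many block maps $\mathcal A^n \to \mathcal A$. Your version is simply more explicit about the counting details than the paper's one-line argument.
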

\begin{proof}
Suppose the alphabet of $X$ is $\mathcal A$. Each conjugate of $X$ is determined by a sliding block code, which is itself induced by a positive integer $n$ and a map $\mathcal A^n \to \mathcal A$, and there are only countably many such maps.
\end{proof}

\begin{reptheorem}{thm:every-entropy}
Let $\lambda > 1$. For each $q \geq 2\lceil \lambda \rceil - 1$, there are uncountably many pairwise non-conjugate $\mathcal{S}$-graph shifts on $q$ vertices that have the specification property and entropy $\log \lambda$.
\end{reptheorem}

\begin{proof}
Let $n=\lceil \lambda \rceil -1$. By \cref{thm:mu_exp_of_1}, there is a $\lambda$-expansion $(x_i)_{i=1}^\infty$ of $1$ that contains a pair of consecutive nonzero integers and in which eventually every other digit is $n$. Let $X = X(G,\mathcal S)$ be the $\mathcal{S}$-graph shift constructed in the proof of \cref{thm:shift_of_every_entropy} that corresponds to the $\lambda$-expansion $(x_i)_{i=1}^\infty$. The entropy of $X$ is $\log \lambda$, and $G$ has $2n$ vertices.

Since eventually every other term of $(x_i)$ is $n$, it follows that, for every $k \in \{1,2,\dots,2n\}$, eventually every other positive integer is in $S_k$. Therefore $\sup \Delta(S_k) < \infty$; by \cref{thm:almost_spec}, $X$ has weak specification. If $x_m$ and $x_{m+1}$ are a pair of consecutive nonzero terms of $(x_i)$, then $m$ and $m+1$ are both elements of $S_1 = S_{n+1}$. The vertices $1$ and $n+1$ comprise a length 2 cycle in $G$, so
\[ 2m,\, 2m+1 \in 
    \Big\{\sum_{v\in C} s_v\, :\, \text{$C$ is a cycle in $G$ and $s_v \in S_v$}\Big\}. \]
Therefore $X$ is mixing by \cref{thm:mixing}, and $X$ has the specification property by \cref{thm:specification}.

Suppose $\max \Delta(S_1) = N$. There are uncountably many ways to choose two sets $A,B\subseteq \N$ such that $S_1 = A \sqcup B$ and both $\Delta(A)$ and $\Delta(B)$ are bounded by $3N$. For any such choice, vertex cloning at vertex $1$ with respect to the decomposition $S_1 = A \sqcup B$ produces an $\mathcal{S}$-graph shift on $2\lceil \lambda\rceil - 1$ vertices which is mixing and has entropy $\log \lambda$ by \cref{thm:cloning_preserves_entropy}. Further, since $\Delta(A)$ and $\Delta(B)$ are bounded by $3N$, the shift space has the weak specification property by \cref{thm:almost_spec} and the specification property by \cref{thm:specification}. Since there are uncountably many partitions $A \sqcup B$, \cref{thm:countable-conjugates} guarantees uncountably many pairwise non-conjugate $\mathcal{S}$-graph shifts on $2\lceil \lambda\rceil - 1$ vertices with entropy $\log \lambda$ and the specification property.

If $q > 2\lceil \lambda\rceil - 1$, then vertex clone as many times as necessary to acquire $q$ vertices, maintaining the specification property and entropy at each step.
\end{proof}

Since the entropy of an $\mathcal{S}$-graph shift on $q$ vertices is at most $\log q$, the number $2 \lceil \lambda \rceil -1$ in \cref{thm:every-entropy} can be decreased by at most a factor of 2. The minimal number of required vertices remains unclear, even for the weaker statement in \cref{thm:shift_of_every_entropy}.

\begin{question}
    What is the minimal number of vertices in an $\S$-graph shift with entropy $\log \lambda$?
\end{question}

\section{Intrinsic ergodicity}\label{sec:intrinsic-ergodicity}

There is a second, distinct notion of entropy in dynamical systems. Given a shift space $X$, let $\mathcal M(X)$ denote the set of all probability measures on $X$ that are invariant under the shift map $\sigma$. Though its formula is more complicated than for topological entropy, it's possible to assign a number $h_\mu(\sigma)$, called the \textit{measure-theoretic entropy}, to each measure $\mu \in \mathcal M(X)$. These two notions of entropy are related through the so-called \textit{variational principle}:
\[
    h(X) = \sup\,\{ h_\mu(\sigma) : \mu \in \mathcal M(X)\}.
\]
For a general dynamical system, this supremum may not be attained by any probability measure; for a shift space, however, it always is. When there is a unique measure that attains the supremum, the shift space is called \textit{intrinsically ergodic}.

Bowen showed in 1974 that every shift space with the specification property is intrinsically ergodic \cite{specification-ergodic}. Climenhaga and Thompson \cite{intrinsic-ergodicity} extended Bowen's result to prove that every shift space that is ``close enough'' to having the specification property is also intrinsically ergodic. In particular, they prove that every $S$-gap shift, as well as any factor of an $S$-gap shift, is intrinsically ergodic; Matson and Sattler \cite{S-limited} utilize their results to prove that every factor of an ordered $\S$-limited shift is, as well. In this section, we prove that this result extends much farther:

\begin{theorem}\label{thm:intrinsic-spec}
    Every factor of an $\S$-graph shift is intrinsically ergodic.
\end{theorem}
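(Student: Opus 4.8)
The plan is to place every $\mathcal{S}$-graph shift inside the framework of Climenhaga and Thompson \cite{climenhaga-ergodicity}, whose main output is exactly what we want: a shift \emph{and all of its factors} are intrinsically ergodic as soon as its language admits a decomposition $\mathcal{L}(X) = C^p\,\mathcal{G}\,C^s$ in which the central collection $\mathcal{G}$ has a specification property and the peripheral collections $C^p, C^s$ have strictly smaller entropy than $X$. Since $X$ is a factor of itself, the whole theorem reduces to exhibiting such a decomposition. Before doing so I would make two reductions. First, I would reduce to an irreducible graph $G$: a measure of maximal entropy of a reducible $\mathcal{S}$-graph shift is supported on an irreducible component achieving the maximal entropy, and the analysis of each factor reduces similarly, so it suffices to treat the irreducible case. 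Second, I would dispose of the trivial case: by \cref{thm:zero_entropy_shifts} an irreducible entropy-$0$ shift is a single periodic orbit, whose only factors are again periodic orbits, each carrying a unique invariant measure.

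For a nontrivial irreducible $X = X(G,\mathcal{S})$ I would take $\mathcal{G}$ to be the collection of \emph{full-block words}, those of the form $c_1^{s_1}c_2^{s_2}\cdots c_m^{s_m}$ with $s_i \in S_{c_i}$, $c_i \neq c_{i+1}$, and $(c_i,c_{i+1}) \in E(G)$; these are precisely the words that begin at the start of a block and end at the end of a block. I would set both $C^p$ and $C^s$ equal to the single-symbol runs $\{a^k : a \in V(G),\ k \geq 1\}$. Every $w \in \mathcal{L}(X)$ has the shape $a^{k_1}\,(c_1^{s_1}\cdots c_m^{s_m})\,b^{k_2}$, where the leading and trailing runs are the (possibly truncated) partial blocks inherited from the ambient bi-infinite point, so $w \in C^p\,\mathcal{G}\,C^s$.

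The two hypotheses are then straightforward to verify. For specification of $\mathcal{G}$: given full-block words $v,v'$ with $v$ ending in $c_m$ and $v'$ beginning with $c_1'$, irreducibility of $G$ supplies a directed walk from $c_m$ to $c_1'$, and filling its interior vertices with the minimal elements of their $S$-sets yields a connecting word of length at most $\lvert V(G)\rvert\cdot\max_i \min S_i$; taking the gap $t$ to be this uniform bound lets us glue any finite list of members of $\mathcal{G}$, so $\mathcal{G}$ has specification. For the entropy gap, $C^p \cup C^s$ contains at most $\lvert V(G)\rvert$ words of each length, whence $h(C^p \cup C^s) = 0$, while $h(X) > 0$ in the nontrivial case by \cref{thm:zero_entropy_shifts}; thus $h(C^p \cup C^s) < h(X)$. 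Feeding this decomposition into the Climenhaga--Thompson factor theorem then gives intrinsic ergodicity of $X$ and of every factor, which (combined with the reductions) finishes the proof.

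The main obstacle is not the entropy estimate, which collapses to $0 < h(X)$, but matching the exact technical hypotheses under which \cite{climenhaga-ergodicity} upgrades intrinsic ergodicity of $X$ to that of \emph{every factor} of $X$: in particular I would need the connecting words used to glue elements of $\mathcal{G}$ to be drawn from a suitably controlled collection, and I would need the per-length complexity bound on $C^p$ and $C^s$ (not merely the limiting entropy) that their factor theorem requires. A secondary delicate point is the reducibility reduction: I must argue that after discarding the non-maximal and entropy-$0$ components, the surviving irreducible component genuinely controls the measure of maximal entropy of the original shift and of each of its factors, rather than producing a second competing measure.
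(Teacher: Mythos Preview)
Your decomposition is exactly the paper's: $\Gamma$ is the set of full-block words and $C^p = C^s$ the single-symbol runs, with weak specification of $\Gamma$ verified via shortest walks in $G$ and $h(C^p \cup C^s) = 0$ immediate.

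The one genuine hypothesis you miss is the third Climenhaga--Thompson condition: for each $m$ there must be an $r$ such that every word in $\Gamma(m) = \{uvw : u \in C^p,\, v \in \Gamma,\, w \in C^s,\, |u|,|w| \leq m\}$ can be extended on both ends, by words of length at most $r$, to a word in $\Gamma$. Your closing paragraph correctly senses something is missing, but the two worries you name (controlled connecting words, per-length complexity of $C^p,C^s$) are not this condition. It is easy to check here --- take $r = \max_a \min\big(S_a \setminus \{1,\dots,m-1\}\big)$ and complete the two dangling partial blocks --- but it is a real hypothesis that you need to identify and verify, and it is what actually makes the factor result go through.

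The paper does not make your irreducibility or zero-entropy reductions; it simply verifies the three conditions directly. Your zero-entropy reduction is sound. The irreducibility reduction for \emph{factors} is, as you suspect, genuinely delicate (a factor of a reducible shift need not split along the components), so it does not obviously buy you anything; on the other hand, the paper's direct verification of weak specification of $\Gamma$ also tacitly requires $G$ irreducible, so your instinct to isolate that case was reasonable.
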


In particular, every factor of an unordered $\S$-limited shift is intrinsically ergodic, which extends the corresponding statement for ordered $\S$-limited shifts proved in \cite{S-limited}. To prove \cref{thm:intrinsic-spec}, we will state the relevant results from \cite{intrinsic-ergodicity}, which hinge on the following extension of the specification property to subsets of the space. Recall that the \textit{language} of a shift space is the set of all finite words that appear in any element of $X$, and $\mathcal B_n(X)$ is the set of all $n$-letter words in the language.

\begin{definition}
    Let $X$ be a shift space with language $\mathcal L$. A set $\Gamma \subseteq \mathcal L$ has the \textit{specification property} in the language $\mathcal L$ if there exists a natural number $t$ for which the following is true: For all natural numbers $m \in \N$ and words $w_1, w_2,\dots, w_m \in \Gamma$, there are words $v_1,v_2,\dots,v_{m-1} \in \mathcal L$ such that $w_1v_1w_2v_2\cdots w_{m-1}v_{m-1}w_m \in \mathcal L$ and $\lvert v_i\rvert = t$ for every $1 \leq i \leq m-1$. If this statement is true after relaxing the equality $\lvert v_i \rvert = t$ to the inequality $\lvert v_i\rvert \leq t$, then $\Gamma$ has the \textit{weak specification property}.
\end{definition}

\noindent
When $\Gamma = \mathcal L$, these definitions coincide with those presented in \cref{sec:dynamical-properties}.

Climenhaga and Thompson's results in \cite{intrinsic-ergodicity} establish intrinsic ergodicity for shifts whose languages can be decomposed into a prefix set, a ``good'' core set, and a suffix set. More precisely, they proved:

\begin{theorem}\label{thm:climenhaga}
    Suppose that $X$ is a shift space with language $\mathcal L$ for which there are three nonempty sets $C^p, C^s, \Gamma \subseteq \mathcal L$ so that
    \[
        \mathcal L = \{xyz \in \mathcal L : x \in C^p \text{, } y \in \Gamma\text{, and } z \in C^s\}.
    \]
    Further suppose that:
    \begin{enumerate}[label=\textup{(\Roman*)}]
        \item $\Gamma$ has the weak specification property.
        \item $h(C^p \cup C^s) < h(X)$, where $h(C) = \lim\limits_{n\to\infty} \frac{1}{n} \log \lvert C \cap \mathcal B_n(X)\rvert$.
        \item Setting $\Gamma(m) = \{uvw : u \in C^p\text{, } v \in \Gamma\text{, and } w \in C^s \text{ with } \lvert u \rvert, \lvert w\rvert \leq m \}$, the following is true: For every $m \in \N$, there is a natural number $r$ such that, for every $v \in \Gamma(m)$, there are words $u,w \in \mathcal L$ with at most $r$ letters for which $uvw \in \Gamma$.
    \end{enumerate}
    Then $X$ is intrinsically ergodic. Moreover, if $\tilde{X}$ is a factor of $X$, then there is a decomposition of its language into sets $\tilde{C}^p$, $\tilde{\Gamma}$, and $\tilde{C}^s$ such that Conditions (I) and (III) still hold and $h(\tilde{C}^p \cup \tilde{C}^s) \leq h(C^p \cup C^s)$.
\end{theorem}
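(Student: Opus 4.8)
The plan is to prove the two assertions in turn: intrinsic ergodicity of $X$ from the decomposition, and the inheritance of a (weaker) decomposition by a factor. For the first I would follow Bowen's counting-and-Gibbs strategy, taking care that the specification hypothesis is invoked only on the core $\Gamma$. Write $h = h(X)$. Condition (II) says $C^p \cup C^s$ grows with exponential rate strictly below $h$; since every word of $\mathcal L$ splits as $xyz$ with $x \in C^p$, $y \in \Gamma$, $z \in C^s$, summing over the admissible boundary pieces (whose counts are controlled by (II)) shows that the core already carries full rate, i.e. $\lvert \Gamma \cap \mathcal B_n(X)\rvert = e^{nh + o(n)}$. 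Using the weak specification of (I) to splice core words, and (III) to first pull an arbitrary pair of core words back into $\Gamma$, I would establish a supermultiplicative estimate $\lvert \Gamma_m\rvert\,\lvert \Gamma_n\rvert \le K\,\lvert \Gamma_{m+n+t}\rvert$; Fekete's lemma then upgrades this to two-sided bounds $K_1 e^{nh} \le \lvert \Gamma_n\rvert \le K_2 e^{nh}$.

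Next I would build a measure of maximal entropy together with a Gibbs estimate on the core. Let $\mu$ be any weak-$*$ limit point of the empirical measures $\lvert \Gamma_n\rvert^{-1}\sum_{w \in \Gamma_n}\delta_{x_w}$, choosing a representative $x_w \in [w]$ for each $w$. Counting the core words of length $N \gg n$ that begin with a fixed $w \in \Gamma_n$—bounded above by $\lvert \mathcal B_{N-n}(X)\rvert$ and below, using (I) and (III), by a constant multiple of $\lvert \Gamma_{N-n}\rvert$—together with the rate bounds from the first step yields the two-sided Gibbs property $c_1 e^{-nh} \le \mu([w]) \le c_2 e^{-nh}$ for every $w \in \Gamma$. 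From the lower bound and the counting one reads off $h_\mu(\sigma) = h$, and specification of $\Gamma$ forces $\mu$ to be ergodic.

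The heart of the argument, and the step I expect to be the main obstacle, is \emph{uniqueness}, precisely because specification is available only on $\Gamma$ and Bowen's classical comparison cannot be run verbatim. Let $\nu$ be any ergodic measure with $h_\nu(\sigma) = h$. The plan is to prove that $\nu$ also obeys the two-sided Gibbs bound on $\Gamma$: the lower bound transfers from the counting and the entropy equality through Shannon--McMillan--Breiman, while the upper bound is forced by the specification of $\Gamma$, since a single core cylinder cannot carry too much mass once one exhibits exponentially many disjoint completions of comparable measure. Condition (II) is what makes the comparison decisive: because the boundary collection has entropy strictly below $h$, for $\nu$-almost every point the $n$-block lies in $\Gamma$ for a set of $n$ of full density, so the Gibbs bounds apply along a generating subsequence of cylinders. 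Matching these bounds for $\mu$ and $\nu$ gives $\nu([w]) \le (c_2/c_1)\,\mu([w])$ on those cylinders, hence $\nu \ll \mu$; as both measures are ergodic and shift-invariant, $\nu = \mu$, so $X$ is intrinsically ergodic.

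For the factor statement, let $\pi \colon X \to \tilde X$ be a factor map; by the Curtis--Lyndon--Hedlund theorem it is a sliding block code with some memory $m$ and anticipation $a$, sending an $(n+m+a)$-block to an $n$-block. I would push the decomposition forward, absorbing the window straddle into the boundary sets: let $\tilde\Gamma$ consist of the images of core words trimmed by $m$ symbols on the left and $a$ on the right (so that every output symbol is determined by input letters lying inside the core word), and let $\tilde C^p$, $\tilde C^s$ collect the images of $C^p$, $C^s$ enlarged by the at-most-$(m+a)$ straddling letters. Condition (I) transfers by a lift--splice--project argument: two words of $\tilde\Gamma$ lift to core words, which (I) glues in $X$ with a gap of length $t$, and projecting gives a gluing in $\tilde X$ with gap at most $t + m + a$, again a fixed constant; Condition (III) transfers identically. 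Finally, since $\pi$ is a fixed finite-window map, the image of any length-$n$ word collection has no more elements than the collection itself, up to the bounded factor contributed by the straddling letters, so $h(\tilde C^p \cup \tilde C^s) \le h(C^p \cup C^s)$. The residual obstacle here is bookkeeping: checking that $\tilde C^p\,\tilde\Gamma\,\tilde C^s$ still exhausts $\mathcal L(\tilde X)$ and that spliced-then-projected words remain admissible, which I would isolate in a short lemma on images of concatenations under sliding block codes.
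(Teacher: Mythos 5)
A point of comparison before anything else: the paper does not prove this statement. \cref{thm:climenhaga} is imported verbatim from Climenhaga and Thompson \cite{climenhaga-ergodicity}; the paper only states it and then applies it in the proof of \cref{thm:intrinsic-spec}. So your attempt is being measured against an entire research paper rather than a few paragraphs of the present one, and that is where the trouble lies: your sketch reproduces the broad Bowen-style outline of their argument, but the steps you treat as routine are exactly where the real work sits, and at least two of them have concrete gaps as written.

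First gap: the supermultiplicative estimate. Weak specification of $\Gamma$ (Condition (I)) splices $w_1, w_2 \in \Gamma$ into a word $w_1 v w_2 \in \mathcal L$ --- in the language, not in $\Gamma$. To ``pull it back into $\Gamma$'' you invoke (III), but (III) applies only to words in $\Gamma(m)$, i.e.\ words whose $C^p$-prefix and $C^s$-suffix have length at most $m$. The spliced word decomposes as $xyz$ with $x \in C^p$, $y \in \Gamma$, $z \in C^s$, but nothing bounds $\lvert x\rvert$ and $\lvert z\rvert$: the decomposition of the spliced word need not respect the decompositions of its factors, so (III) cannot be applied with any fixed $m$. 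Second gap: uniqueness. You assert that Condition (II) forces the $n$-block of a $\nu$-typical point to lie in $\Gamma$ for a full-density set of $n$, and that specification of $\Gamma$ forces an upper Gibbs bound for an arbitrary ergodic measure of maximal entropy $\nu$. Neither follows as stated: a typical block decomposes as $uvw$ with possibly long $u \in C^p$ and $w \in C^s$, and controlling those lengths for measures of near-maximal entropy is precisely the technical heart of Climenhaga--Thompson; moreover, in Bowen's classical argument the Gibbs property is established only for the constructed measure $\mu$, never for a competitor $\nu$, and uniqueness is obtained by a separated-set counting comparison instead. Without these steps the absolute-continuity conclusion $\nu \ll \mu$ (which additionally requires Gibbs-type control on a family of cylinders generating the Borel sets mod $\nu$-null sets, not just on core cylinders) does not go through. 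Your factor argument is closer to workable --- absorbing the sliding-block window into the boundary sets is essentially what Climenhaga--Thompson do --- but given what is missing upstream, the right course here is to cite the theorem, as the paper does, rather than to prove it.
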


With this result in hand, it is not too hard to prove that every $\S$-graph shift is intrinsically ergodic.

\begin{proof}[Proof of \cref{thm:intrinsic-spec}]
    We first find a decomposition for every $\S$-graph shift that satisfies Conditions (I), (II), and (III). Let $\mathcal G = (G,\mathcal S)$ be an essential set-equipped graph and consider the shift space $X = X(\mathcal G)$. We set
    \begin{align*}
        \Gamma &= \{ u_1^{s_1}\cdots u_n^{s_n} : (u_i,u_{i+1}) \in E(G)\text{ and } s_i \in S_{u_i} \text{ for every i}\}\\
        C^p = C^s &= \{ u^n : u \in V(G) \text{ and } n \in \N_0 \}.
    \end{align*}
    \begin{enumerate}[label=\textup{(\Roman*)}]
        \item For each $u \in V(G)$, set $m_u = \min S_u$; then define, for each pair $u,v \in V(G)$,
        \[
            \delta(u,v) = \min\Big\{ {\textstyle\sum_i m_{w_i}} : (u,w_1,\dots,w_n,v) \text{ is a walk in } G\Big\}.
        \]
        We claim that the set $\Gamma$ has the weak specification property with $t = \max_{u,v \in V(G)} \delta(u,v)$. Suppose $x,y \in \Gamma$, where $x$ ends with a block of the letter $u$ and $y$ begins with a block of the letter $v$. If $(u,w_1,\dots,w_n,v)$ is a walk in $G$, then $xw_1^{m_{w_1}}\cdots w_n^{m_{w_n}}y$ is another word in $\Gamma$, and we can always choose a walk for which $\sum_i m_{w_i} \leq t$.
        \item This is simple: $\lvert (C^p \cup C^s) \cap \mathcal B_n(X)\rvert = \lvert V(G)\rvert$ for every $n \geq 1$, so $h(C^p \cup C^s) = 0$.
        \item We can choose
        \[
            r = \max_{u \in V(G)} \min\!\big(S_u \setminus \{1,2,\dots,m-1\}\big).
        \]
        Since every word in $\Gamma(m)$ begins and ends with a partial block of at most $m$ letters, we need to add at most $r$ letters on either end to complete the block.
    \end{enumerate}
    By the second part of \cref{thm:climenhaga}, any factor of $X$ also satisfies conditions (I), (II), and (III). Therefore $X$ and each of its factors is intrinsically ergodic.
\end{proof}

Climenhaga and Thompson also note \cite[Proposition 2.4]{intrinsic-ergodicity} that every shift space that satisfies Condition (III) and an upgraded version of Condition (I) with the specification property in place of the weak one is either a single periodic orbit or has positive entropy. As we proved in \cref{thm:zero_entropy_shifts}, the same conclusion holds for every irreducible $\S$-graph shift.

\section{Zeta function}\label{sec:zeta}

Recall that the zeta function of a shift space $X$ is the power series defined as
\[
    \zeta_X(t) = \exp\!\left(\,\sum_{n=1}^\infty \frac{p_n(X)}{n}t^n\!\right).
\]
This section proves \cref{thm:big-zeta}, establishing an explicit formula for the zeta function of $\S$-graph shifts.

Before we start, it will be useful to introduce some new definitions. Recall that a walk in a set-equipped graph is composed of a walk $W = (v_1,\dots,v_n)$ on $G$ together with a selection $(s_1,\dots,s_n)$ of an element $s_i \in S_{v_i}$ at each vertex. In some cases, a closed walk $C$ may be formed by concatenating a single closed walk with itself several times. We let $\fc(C)$ denote the smallest closed walk that can form $C$ via repeated concatenation; we call this walk the \textit{fundamental closed walk} of $C$. (This fundamental closed walk may still repeat vertices; see \cref{ex:fund-cl-walk}.) Recall that the \textit{size} of a closed walk is the sum of the elements of the $S_i$ selected in the walk and is denoted $\lvert W\rvert$; the \textit{length} of $W$ is the number of vertices (including repetition) in the walk; this is denoted $\ell(W)$.

\begin{example}\label{ex:fund-cl-walk}
    In the closed walk $(a,d,a,d,a,d,a,d)$ with selections $(1,4,3,2,1,4,3,2)$, the fundamental closed walk is $(a,d,a,d)$ with selections $(1,4,3,2)$, so $\ell(\fc(C)) = 4$. The size of $\fc(C)$ is $\lvert \fc(C)\rvert = 1 + 4 + 3 + 2 = 10$.
\end{example}

We use $\mathcal C$ to denote the set of all closed walks (with at least two vertices) in $\mathcal G$. This first lemma shows that a sum of powers of the generating matrix is already very similar to the logarithm of the zeta function.

\begin{lemma}\label{thm:trace-powers-zeta-fn}
    If $\mathcal G$ is a set-equipped graph and $X = X(\mathcal G)$ is the $\S$-graph shift associated with it, then
    \begin{equation}\label{eq:periodic-gen-fn}
        \sum_{n=1}^\infty \frac{1}{n}\tr(B_\mathcal G(t)^n)
        = \sum_{n=1}^\infty\, \frac{p_n(X)-p_1(X)}{n} t^n.
    \end{equation}
\end{lemma}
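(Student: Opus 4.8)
The plan is to expand the left-hand side as a weighted sum over closed walks and then match the coefficient of $t^n$ on each side. First I would expand the trace: since $B_\mathcal G(t)_{i,j} = H_i(t)$ whenever $(i,j) \in E(G)$, the product of the matrix entries along a closed walk $(v_1,\dots,v_n)$ is $\prod_{k=1}^n H_{v_k}(t)$, so a diagonal entry of $B_\mathcal G(t)^n$ gathers these products over all closed walks of length $n$ rooted at a fixed vertex, and summing over the diagonal gives
\[
    \tr\!\big(B_\mathcal G(t)^n\big) = \sum_{\substack{C \in \mathcal C \\ \ell(C) = n}} t^{\lvert C\rvert},
\]
where $\mathcal C$ is the set of closed walks with a selected element at each vertex. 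Each closed walk of length $n$ has size $\lvert C\rvert \geq n$, so $\tr(B_\mathcal G(t)^n) = O(t^n)$ and the left side of \eqref{eq:periodic-gen-fn} is a well-defined formal power series. Dividing by $n$ and summing, it becomes $\sum_{C \in \mathcal C} \ell(C)^{-1} t^{\lvert C\rvert}$, so the coefficient of $t^n$ is $\sum_{\lvert C\rvert = n} \ell(C)^{-1}$. It then suffices to prove that this quantity equals $(p_n - p_1)/n$ for every $n$.

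Next I would interpret the right side combinatorially. A point fixed by $\sigma$ is constant, so $p_1$ counts the constant points of $X$; each is also fixed by $\sigma^n$, and any point with $\sigma^n x = x$ is either constant or not, so $p_n - p_1$ equals the number $r_n$ of non-constant period-$n$ points. I would then exhibit a bijection between $\{C \in \mathcal C : \lvert C\rvert = n\}$ and the set $\mathcal P_n^0$ of non-constant period-$n$ points $x$ with a run boundary just before position $0$ (that is, $x_{-1} \neq x_0$): the length-$n$ word $x_0\cdots x_{n-1}$ begins and ends at a run boundary, so reading it as a concatenation of full blocks recovers a unique closed walk $C_x$ of size $n$, and conversely repeating the block word of $C$ produces such a point. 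Under this bijection $\ell(C_x)$ equals the number $b(x)$ of run boundaries of $x$ in one length-$n$ window, reducing the claim to
\[
    \sum_{x \in \mathcal P_n^0} \frac{1}{b(x)} = \frac{r_n}{n}.
\]

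Finally I would evaluate this sum using the phase action of $\mathbb Z/n$ on $\mathcal P_n$ given by $\sigma$ (which is valid since $\sigma^n$ is the identity on $\mathcal P_n$). For $x$ of least period $d \mid n$, its orbit has $d$ elements and $b(x) = (n/d)\beta(x)$, where $\beta(x)$ is the number of run boundaries in one fundamental period; moreover $\sigma^j x \in \mathcal P_n^0$ exactly when $j$ is a run-boundary position, so precisely $\beta(x)$ of the $d$ orbit points lie in $\mathcal P_n^0$. Each orbit therefore contributes $\beta(x)\cdot\big((n/d)\beta(x)\big)^{-1} = d/n$ to the sum, and summing over orbits gives $\tfrac1n\sum_{\mathcal O}\lvert\mathcal O\rvert = r_n/n$, as required.

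The trace expansion and the identity $p_n - p_1 = r_n$ are routine; the crux is the orbit count in the last paragraph, where one must track how the rooting of a closed walk and the phase of a periodic point interact so that the weights $\ell(C)^{-1}$ telescope into $n^{-1}$ times the orbit sizes. A secondary subtlety is that the constant points never appear in $\tr(B_\mathcal G(t)^n)$: because $G$ is simple it has no loops, hence no length-$1$ closed walks, so the trace records only non-constant periodic points, which is exactly what forces the correction $p_1$ on the right. I expect this orbit-counting step, together with verifying that the run decomposition of a non-constant periodic point in $X$ really is a concatenation of full blocks (so $C_x$ is a genuine element of $\mathcal C$), to be where the real work lies.
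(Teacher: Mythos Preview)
Your argument is correct and follows the same overall strategy as the paper: expand $\tr\!\big(B_{\mathcal G}(t)^n\big)$ as a sum over closed walks in the set-equipped graph and then match coefficients against the periodic-point count. The bookkeeping differs, however. The paper works with \emph{unrooted} closed walks and introduces the fundamental closed walk $\fc(C)$; it uses the identity $\ell(C)/\ell(\fc C)=\lvert C\rvert/\lvert\fc C\rvert$ to rewrite the trace and then argues directly that $\sum_{\lvert C\rvert=r}\lvert\fc C\rvert = p_r-p_1$, since each unrooted walk of size $r$ contributes $\lvert\fc C\rvert$ periodic points. You instead keep the walks \emph{rooted}, set up a bijection with non-constant period-$n$ points marked at a run boundary, and then run a Burnside-style orbit count under the $\mathbb Z/n$-action by $\sigma$ to collapse the weights $\ell(C)^{-1}$ to $n^{-1}\lvert\mathcal O\rvert$. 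Your route avoids the auxiliary notion of $\fc(C)$ and makes the passage from ``length'' to ``size'' completely explicit, at the cost of a slightly longer argument; the paper's version is terser but leans on that extra piece of notation. Either way the core double-count is the same.
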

\begin{proof}
    Let $P \subseteq \mathcal L(X)$ be the set of all words $w$ of at least two letters such that $w^\infty \in X$. (Recall that $w^\infty$ is the periodic point formed by concatenating infinitely many copies of $w$.) The points in $P$ can be divided into equivalence classes where $u \sim w$ if $\lvert u \rvert = \lvert w\rvert$ and $\sigma^n(u^\infty) = w^\infty$ for some $n \in \N_0$. The least period of $w$ is the number of elements in the equivalence class of $w$.
    
    Since the label of each vertex in $\mathcal G$ is unique, each equivalence class corresponds to exactly one closed walk in $\mathcal G$ and vice versa. Therefore
    \[
        \tr(B_\mathcal G(t)^n)
            = \sum_{\mathclap{\substack{C \in \mathcal C\\\ell(C) = n}}}\, \ell\big(\!\fc(C)\mspace{-2.5mu}\big)\, t^{|C|},
    \]
    since there are $\ell\big(\!\fc(C)\mspace{-2.5mu}\big)$ places to ``start'' each cycle. Since $\ell(C) / \ell\big(\!\fc(C)\mspace{-2.5mu}\big) = \lvert C\rvert / \lvert \fc(C)\rvert$ (both are the number of times $\fc(C)$ must be concatenated to form $C$), we have
    \[
        \tr(B_\mathcal G(t)^n)
            = \sum_{\mathclap{\substack{C \in \mathcal C\\\ell(C) = n}}}\, n\, \lvert\fc(C)\rvert\, \frac{t^{\lvert C\rvert}}{\lvert C\rvert}.
    \]
    Summing over $n$ and isolating the coefficient of $t^r$, we get
    \[
        [t^r]\sum_{n=1}^\infty \frac{1}{n}\tr(B_\mathcal G(t)^n)
            = \frac{1}{r}\, \sum_{\mathclap{\substack{C \in \mathcal C\\|C| = r}}}\, \lvert \fc(C)\rvert.
    \]
    Because $\lvert \fc(C)\rvert$ is the total number of periodic words that are associated to the cycle $\fc(C)$, the sum on the right side is the number of words in $X$ with period $r$ and least period at least 2. In other words,
    \[
        \sum_{n=1}^\infty \frac{1}{n}\tr(B_\mathcal G(t)^n)
        = \sum_{r=1}^\infty\, \frac{p_r(X)-p_1(X)}{r} t^r.\qedhere
    \]
\end{proof}

\begin{lemma}\label{thm:trace-sum-to-det}
    For any square matrix $A$, 
    \begin{equation}\label{eq:trace-sum}
        \exp\!\left(\,\sum_{n=1}^\infty \frac{1}{n} \tr(A^n)\!\right) = \frac{1}{\det(I - A)}.
    \end{equation}
\end{lemma}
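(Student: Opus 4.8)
The plan is to establish the identity $\exp\!\left(\sum_{n=1}^\infty \frac{1}{n}\tr(A^n)\right) = \frac{1}{\det(I-A)}$ by relating both sides to the eigenvalues of $A$. Let $\lambda_1,\dots,\lambda_p$ be the eigenvalues of $A$ (with multiplicity), where $A$ is $p \times p$. The key facts I would invoke are that $\tr(A^n) = \sum_{k=1}^p \lambda_k^n$ and $\det(I - A) = \prod_{k=1}^p (1 - \lambda_k)$; both follow from putting $A$ in upper-triangular (Jordan or Schur) form, since the trace of a power and the determinant depend only on the eigenvalues.

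First I would rewrite the left-hand sum by interchanging the order of summation:
\[
    \sum_{n=1}^\infty \frac{1}{n}\tr(A^n)
    = \sum_{n=1}^\infty \frac{1}{n} \sum_{k=1}^p \lambda_k^n
    = \sum_{k=1}^p \sum_{n=1}^\infty \frac{\lambda_k^n}{n}
    = \sum_{k=1}^p \big(-\log(1 - \lambda_k)\big),
\]
using the Taylor expansion $-\log(1-z) = \sum_{n=1}^\infty z^n/n$. Exponentiating turns the sum of logarithms into a product:
\[
    \exp\!\left(\sum_{k=1}^p -\log(1-\lambda_k)\right)
    = \prod_{k=1}^p \frac{1}{1 - \lambda_k}
    = \frac{1}{\prod_{k=1}^p (1 - \lambda_k)}
    = \frac{1}{\det(I - A)},
\]
which is the desired identity.

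\textbf{The main subtlety} is convergence: the manipulation with $-\log(1-\lambda_k)$ is only valid analytically when $|\lambda_k| < 1$, i.e. when $A$ has spectral radius less than $1$. I would handle this by treating the identity as an identity of formal power series in an auxiliary variable, which is the natural setting for zeta functions. Concretely, one replaces $A$ by $tA$ (so that $\tr((tA)^n) = t^n \tr(A^n)$) and interprets both sides as formal power series in $t$; then each $-\log(1 - t\lambda_k) = \sum_n (t\lambda_k)^n/n$ is a well-defined formal series, the reordering of summation is a legitimate rearrangement of formal series (each coefficient of $t^n$ is a finite sum), and the exponential and the reciprocal determinant are both formal power series with constant term $1$. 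Since the formal identity holds for all $t$, it holds in particular in the region $|t|$ small enough that $|t\lambda_k| < 1$ for all $k$, giving genuine convergence; setting $t = 1$ recovers the stated form whenever $A$ itself has spectral radius below $1$. For the application to the zeta function, only the formal identity is needed, so I would emphasize the formal-power-series interpretation and note that the eigenvalue argument above makes the coefficient-wise equality transparent.
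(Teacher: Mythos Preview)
Your proof is correct and follows essentially the same route as the paper: express $\tr(A^n)$ as the power sum of eigenvalues, recognize $\sum_n \lambda_k^n/n = -\log(1-\lambda_k)$, exponentiate, and identify the resulting product as $1/\det(I-A)$. The paper's version is actually terser and does not address the convergence issue you raise; your added remark about interpreting the identity as one of formal power series in an auxiliary variable is a worthwhile clarification, since that is exactly how the lemma is used in the zeta-function computation.
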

\begin{proof}
    Suppose that $A$ is a $k \times k$ matrix with eigenvalues $\lambda_1, \dots, \lambda_k$. The left sum of \eqref{eq:trace-sum} simplifies to
    \[
        \sum_{n=1}^\infty \frac{\lambda_1^n + \cdots + \lambda_k^n}{n}
        = \sum_{i=1}^k \sum_{n=1}^\infty \frac{\lambda_i^n}{n}
        = \sum_{i=1}^k -\ln(1-\lambda_i).
    \]
    Then
    \[
        \exp\!\left(\,\sum_{n=1}^\infty \frac{1}{n} \tr(A^n)\!\right)
        = \exp\!\left(\,\sum_{i=1}^k -\ln(1-\lambda_i)\!\right)
        = \prod_{i=1}^k \frac{1}{1-\lambda_i},
    \]
    whose denominator is the characteristic polynomial of $A$ evaluated at 1, which is $\det(I - A)$.
\end{proof}

\begin{reptheorem}{thm:big-zeta}
    If $\mathcal G$ is a set-equipped graph and $X = X(\mathcal G)$ is the $\S$-graph shift associated with it, then
    \begin{equation}\label{eq:zeta-function-primitive-s-graph}
        \zeta_X(t) = \frac{1}{(1-t)^{p_1(X)}\det\!\big(I - B_{\mathcal G}(t)\big)}.
    \end{equation}
\end{reptheorem}
\begin{proof}
    We calculate using \cref{thm:trace-powers-zeta-fn}:
    \begin{align*}
        \log \zeta_X(t)
        &= \sum_{n=1}^\infty \frac{p_n(X)}{n} t^n\\
        &= \sum_{n=1}^\infty \frac{1}{n}\tr\big(B_\mathcal G(T)^n\big) + \sum_{n=1}^\infty \frac{p_1(X)}{n}t^n\\
        &= \sum_{n=1}^\infty \frac{1}{n}\tr\big(B_\mathcal G(T)^n\big) - p_1(X)\ln(1-t).
    \end{align*}
    Taking the exponent of both sides and applying \cref{thm:trace-sum-to-det} yields the formula.
\end{proof}

Since the number $p_1(X)$ is easy to calculate---it's simply the number of vertices $u \in V(G)$ for which $S_u$ is infinite---\cref{thm:big-zeta} shows that $\det\!\big(I - B_\mathcal G(t)\big)$ is effectively as powerful an invariant for $\S$-graph shifts as the zeta function.

We can use \cref{thm:big-zeta} to find a simple explicit formula for the zeta function of ordered $\S$-limited shifts:

\begin{proposition}\label{thm:ordered-s-limited-zeta}
    The zeta function of the ordered $\S$-limited shift $X = \vv{X}(S_1,\dots,S_n)$ is given by
    \[
        \zeta_X(t)^{-1} = (1-t)^{p_1(X)} \bigg( 1 - \prod_{i = 1}^n \sum_{s \in S_i} t^s \bigg).
    \]
\end{proposition}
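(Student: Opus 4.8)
The plan is to realize the ordered $\S$-limited shift as an $\S$-graph shift and then invoke \cref{thm:S-graph-zeta-function} directly. As noted immediately after the definition of $\S$-graph shifts, when $G$ is the directed cycle on the vertex set $\{1,2,\dots,n\}$ (with edges $(i,i+1)$ for $1 \le i < n$ together with $(n,1)$), the associated $\S$-graph shift $X(G,\mathcal S)$ is precisely the ordered $\S$-limited shift $\vv{X}(S_1,\dots,S_n)$. Consequently \cref{thm:S-graph-zeta-function} yields $\zeta_X(t) = \big[(1-t)^{p_1(X)}\det(I - B_{\mathcal G}(t))\big]^{-1}$, and the entire problem reduces to evaluating $\det(I - B_{\mathcal G}(t))$ for this single, highly structured graph.

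For the directed cycle, the generating matrix is as simple as possible: by \cref{def:generating-matrix}, each row $i$ of $B_{\mathcal G}(t)$ has exactly one nonzero entry, namely $B_{\mathcal G}(t)_{i,i+1} = H_i(t)$ for $1 \le i < n$ and $B_{\mathcal G}(t)_{n,1} = H_n(t)$, where $H_i(t) = \sum_{s\in S_i} t^s$. Thus $I - B_{\mathcal G}(t)$ is the cyclic bidiagonal matrix with $1$'s on the main diagonal, the entries $-H_i(t)$ on the superdiagonal, and the single ``wrap-around'' entry $-H_n(t)$ in the lower-left corner. I would compute its determinant by cofactor expansion along the first column, which contains only two nonzero entries: the $1$ in position $(1,1)$ and the $-H_n(t)$ in position $(n,1)$. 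The minor from the first entry is upper triangular with unit diagonal, so it contributes $1$; the minor from the corner entry is triangular with diagonal $-H_1(t),\dots,-H_{n-1}(t)$, so it contributes $\pm\prod_{i=1}^{n-1} H_i(t)$. Combining these with the cofactor signs gives $\det(I - B_{\mathcal G}(t)) = 1 - \prod_{i=1}^n H_i(t)$.

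Substituting this determinant into the formula from \cref{thm:S-graph-zeta-function}, recalling that $H_i(t) = \sum_{s\in S_i}t^s$, and taking reciprocals then produces exactly the claimed expression for $\zeta_X(t)^{-1}$. The only step demanding genuine care is the sign bookkeeping in the cofactor expansion of the two nonzero first-column entries; I expect this to be the sole potential pitfall, and it is easily resolved by checking that the accumulated factors $(-1)^{n+1}$ from the corner cofactor and $(-1)^{n-1}$ from the triangular minor combine with the explicit $-H_n(t)$ to give an overall coefficient of $-1$ on the product term. Everything else in the argument is an immediate specialization of \cref{thm:S-graph-zeta-function}.
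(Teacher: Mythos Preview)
Your proposal is correct and follows essentially the same route as the paper: both specialize \cref{thm:S-graph-zeta-function} to the directed-cycle presentation of the ordered $\S$-limited shift and reduce the problem to computing $\det(I-B_{\mathcal G}(t))$. The only cosmetic difference is that the paper reads off the determinant via the permutation expansion (only the identity and the full $n$-cycle contribute, giving $1$ and $(-1)^n\prod_i(-H_i(t))=-\prod_i H_i(t)$), whereas you perform an equivalent cofactor expansion along the first column.
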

\begin{proof}
    When expanding $\det\!\big(I - B_\mathcal G(t)\big)$, where $\mathcal G$ is the usual cycle presentation for an ordered $\S$-limited shift, there are only two nonzero terms: The product of the 1's on the diagonal and the product of the off-diagonal $-H_u(t)$ terms.
\end{proof}

With a small amount of calculation, \cref{thm:ordered-s-limited-zeta} itself specializes to the zeta functions of $S$-gap shifts, a formula that was previously published in \cite{computations-S-gap}. It also easily implies Theorem 5.1 from \cite{S-limited} (which itself generalizes Theorem 5.1 in \cite{S-prime}), stated in slightly different terms as the following corollary. We use the notation $\{\!\{\,\cdot\,\}\!\}$ to denote a multiset.

\begin{corollary}\label{thm:ordered-S-zeta-function}
    If $\vv{X}(S_1,\dots,S_n) \cong \vv{X}(T_1,\dots,T_m)$, then $\{\!\{\sum_i S_i\}\!\} = \{\!\{\sum_i T_i\}\!\}$.
\end{corollary}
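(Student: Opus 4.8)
The plan is to read off the multiset equality directly from the explicit zeta formula in \cref{thm:ordered-s-limited-zeta}, using that the zeta function is a conjugacy invariant. The key translation is that the multiset $\{\!\{\sum_i S_i\}\!\}$---the multiset of all sums $s_1 + \cdots + s_n$ with $s_i \in S_i$, counted with multiplicity---is encoded exactly by the generating function $\prod_{i=1}^n \sum_{s \in S_i} t^s$, since the coefficient of $t^v$ in this product is precisely the number of tuples $(s_1,\dots,s_n) \in S_1 \times \cdots \times S_n$ with $s_1 + \cdots + s_n = v$. Thus the desired conclusion $\{\!\{\sum_i S_i\}\!\} = \{\!\{\sum_i T_i\}\!\}$ is equivalent to the identity $\prod_{i=1}^n \sum_{s \in S_i} t^s = \prod_{j=1}^m \sum_{s \in T_j} t^s$ of formal power series, and the whole proof reduces to establishing this identity.

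First I would record that conjugate shift spaces have equal zeta functions. A conjugacy $\phi\colon X \to Y$ is a bijection with $\phi \circ \sigma = \sigma \circ \phi$, so $\sigma^k(x) = x$ if and only if $\sigma^k(\phi(x)) = \phi(x)$; hence $\phi$ restricts to a bijection between the period-$k$ points of $X$ and those of $Y$ for every $k$, giving $p_k(X) = p_k(Y)$ and therefore $\zeta_X = \zeta_Y$, and in particular $p_1(X) = p_1(Y)$. Writing $X = \vv X(S_1,\dots,S_n)$, $Y = \vv X(T_1,\dots,T_m)$, $P(t) = \prod_{i=1}^n \sum_{s \in S_i} t^s$, and $Q(t) = \prod_{j=1}^m \sum_{s \in T_j} t^s$, \cref{thm:ordered-s-limited-zeta} turns $\zeta_X^{-1} = \zeta_Y^{-1}$ into
\[
    (1-t)^{p_1(X)}\big(1 - P(t)\big) = (1-t)^{p_1(Y)}\big(1 - Q(t)\big).
\]
Since $p_1(X) = p_1(Y)$ and $(1-t)$ is a unit in the ring of formal power series, I can cancel the common factor $(1-t)^{p_1(X)}$ to obtain $P(t) = Q(t)$, which is the identity sought in the first paragraph.

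No step here is a genuine obstacle; the only point needing care is the coefficient-counting dictionary between the product of generating functions and the multiset of sums, handled in the first paragraph, and the legitimacy of the cancellation, which holds because $(1-t)$ is invertible over $\mathbb{Z}[[t]]$. (One can even dispense with citing conjugacy-invariance of $p_1$ separately: each $S_i \subseteq \N$ forces $P(0) = Q(0) = 0$, so both $1 - P$ and $1 - Q$ have constant term $1$ and are coprime to $1-t$; matching $(1-t)$-adic valuations in the display then recovers $p_1(X) = p_1(Y)$ automatically, after which the same cancellation applies.) Comparing coefficients of $t^v$ in $P = Q$ finally delivers the multiset equality.
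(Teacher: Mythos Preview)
Your main argument is correct and follows the paper's approach exactly: invoke the explicit zeta formula from \cref{thm:ordered-s-limited-zeta}, use that conjugate shifts have equal zeta functions and equal $p_1$, cancel the common factor $(1-t)^{p_1}$, and read off the multiset equality from the coefficients of the product identity $P(t)=Q(t)$. The paper's own proof is a terse two-line version of precisely this.

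One small caveat on your parenthetical alternative: in the ring of formal power series $\mathbb{Z}[[t]]$, the element $1-t$ is a \emph{unit} (with inverse $\sum_{k\ge 0} t^k$), so there is no nontrivial $(1-t)$-adic valuation to match, and ``coprime to $1-t$'' is vacuous. Thus you cannot recover $p_1(X)=p_1(Y)$ from the displayed identity alone without already knowing the periodic-point counts agree; the cancellation genuinely relies on your first argument that a conjugacy bijects period-$1$ points.
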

\begin{proof}
    Since the shift spaces are conjugate, their zeta functions are equal. In particular, the power series $\prod_{i=1}^n \sum_{s \in S_i} t^s$ and $\prod_{i=1}^n \sum_{r \in T_i} t^r$ are equal. Equating their coefficients yields the corollary.
\end{proof}

Finally, \cref{thm:big-zeta} also contains the formula for the zeta functions of SFTs, as every SFT is conjugate to a vertex shift $X_G$ for some graph $G$. If $A$ is the adjacency matrix for $G$, then (see Proposition 6.4.6 in \cite{Lind-Marcus})
\[
    \zeta_{X_G}(t) = \frac{1}{\det(I - tA)}.
\]
On the other hand, we can represent $X_G$ as an $\S$-graph shift $X(G,\mathcal S)$ with $S_u = \{1\}$ for each $u \in V(G)$. Then $B_{\mathcal G}(t) = tA$, so \cref{thm:big-zeta} reduces to the classic formula for SFTs.

\section{Appendix: Sequestered proofs}\label{sec:appendix}

\titleformat{\subsubsection}{}{\thesubsubsection.}{4pt}{\bfseries #1\vspace{-1ex}}{}

\subsection{Dynamical properties}

\subsubsection*{\cref{thm:SFT}}
\begin{proof}
    Let $X = X(G,\mathcal{S})$. First suppose each $S_i$ is either finite or cofinite and define
    \[ F_0 = \{ ij : \text{$i$ and $j$ are not adjacent in $G$} \}. \]
    For each $u \in V(G)$, if $S_u$ is finite with maximum element $m$, we define
    \[ F_u = \{ au^kb\, :\, a,b\in V(G)\setminus\{u\} \text{ and } k \in \{1,2,\dots,m\}\setminus S_u \} \cup \{ u^{m+1} \}; \]
    otherwise $S_u$ is cofinite and we define
    \[ F_u = \{ au^kb\, :\, a,b \in V(G)\setminus\{u\} \text{ and } k \in \N \setminus S_u \}. \]
    Each set $F_u$ is finite. Setting $\mathcal{F} = F_0 \cup \Big(\bigcup_{u \in V(G)} F_u\Big)$, we have $X = X_\mathcal{F}$, which shows that $X$ is an SFT.
    
    Now suppose that $S_u$ is neither finite nor cofinite for some $u \in V(G)$. No finite list of forbidden words will disallow all words in $\{ au^kb\, :\, a,b \in V(G)\setminus\{u\} \text{ and } k\not\in S_u\}$ and simultaneously allow all words in $\{ au^kb\, :\, (a,u), (u,b) \in E(G) \text{ and } k\in S_u\}$. Therefore $X$ is not an SFT.
\end{proof}

\subsubsection*{\cref{thm:sofic}}
\begin{proof}
    ($\Rightarrow$) Let $X = X(G,\mathcal S)$ and $(a,b) \in E(G)$. If $S_b$ is finite, then $\Delta(S_b)$ is eventually constant and therefore eventually has period 1. Otherwise, since $X$ is sofic, it has only finitely many follower sets by \cref{thm:sofic-follower-sets}. Therefore not all follower sets of the form $F(ab^k)$ are distinct, so there is a pair of integers $m < n$ so that $F(ab^m) = F(ab^n)$. This implies that $\Delta(S_b)$ eventually has period $n-m$.
    
    ($\Leftarrow$) Let $a,b \in V(G)$. If $\alpha ab^n \in \mathcal L(X)$ for some word $\alpha$, then $F(\alpha ab^n) = F(ab^n)$; so, to show that $X$ has only finitely many distinct follower sets, we need only consider follower sets of the form $F(ab^n)$ or $F(b^n)$. Assume that $\Delta(S_b)$ eventually has period $m_b$, that is, there exists an $N_b \in \mathbb{N}$ so that $s \in S_i$ if and only if $s-m_b \in S_i$ for every $s \geq N_b$. Then $F(ab^s) = F(ab^{s-m_b})$ for every $s \geq N_b$, so every follower set in $X$ has one of the following two forms:
    \begin{enumerate}
        \item $F(b^k)$ with $k \in [0,N_b] \cap S_b$ or
        \item $F(ab^k)$ with $k \in [0,N_b] \cap S_b$ and $(a,b) \in E(G)$.
    \end{enumerate}
    Since there are only finitely many such follower sets, $X$ is sofic.
\end{proof}

\subsubsection*{\cref{thm:mixing}}
\begin{proof}
    ($\Rightarrow$) Let $X = X(G,\mathcal{S})$ and $T = \left\{\sum_{v\in C} s_v\, :\, \text{$C$ is a cycle in $G$ and $s_v \in S_v$}\right\}$. First, suppose that $X$ is mixing. For any two letters $a,b \in V(G)$, there is a word $\alpha \in \mathcal L(X)$ for which $a\alpha b \in \mathcal L(X)$, so there is a path from $a$ to $b$. Therefore $G$ is irreducible. Let $(a,b)$ be an edge in $G$. Because $X$ is mixing, there is an $n \in \mathbb{N}$ with two words $\omega_1 \in \mathcal{B}_n(X)$ and $\omega_2\in\mathcal{B}_{n+1}(X)$ such that $b\omega_1 a,\ b\omega_2 a \in \mathcal L(X)$. We may choose $\omega_1$ and $\omega_2$ so that both have the form $b^{k_0-1}c_1^{k_1}\cdots c_m^{k_m}a^{k_{m+1}-1}$ for some $k_i \in S_{c_i}$, where $c_0=b$, $c_{m+1} = a$, and $(c_i,c_{i+1})\in E(G)$. Then $b\omega_1 a \in \mathcal{B}_{n+2}(X)$ and $b\omega_2 a\in\mathcal{B}_{n+3}(X)$.

    Both $b\omega_1 a$ and $a\omega_2 b$ correspond to closed walks in $G$ since $(a,b)$ is an edge. Any closed walk can be decomposed into cycles. Since $b\omega_1 a \in \mathcal{B}_{n+2}(X)$ and $b\omega_2 a\in\mathcal{B}_{n+3}(X)$, both $n+2$ and $n+3$ can be expressed as sums of elements of $T$. It follows that the greatest common divisor of the elements of $T$ is $1$.

    ($\Leftarrow$) Now suppose that $\gcd(T)=1$ and $G$ is irreducible. There exists some $N$ such that for all $n \geq N$, we can write $n$ as a sum of elements of $T$. Let $(a,b),\ (c,a) \in E(G)$ and $\omega$ be a word comprised of full blocks that is obtained from a walk $w$ in $G$ that begins at $b$, ends at $c$, and visits every vertex at least once. We first show that for all $n\geq N + \lvert \omega\rvert$, there is a word in $\mathcal{B}_n(X)$ consisting of concatenated full blocks. We will use this to prove that $X$ is mixing.

    If $n \geq N + \lvert\omega\rvert$, then $n-\lvert\omega\rvert$ can be written as a sum of elements in $T$, which corresponds to a multiset of cycles in $G$. By inserting these cycles into the walk $w$, we obtain a walk $w_n$ and its corresponding word $\omega_n$ of length $n$ that begins with a full block of $b$ and ends with a full block of $c$. For every word $\alpha \in \mathcal L(X)$, let $\xi^\smallrightarrow_\alpha$ denote a word of shortest length that begins with $\alpha$ and ends with a full block of $a$; let $\xi^\smallleftarrow_\alpha$ denote a word of shortest length that begins with a full block of $a$ and ends with $\alpha$.

    Given any words $\alpha,\beta \in \mathcal L(X)$ and
    $n \geq N + \lvert \omega\rvert + \lvert \xi^{\smallrightarrow}_\alpha \rvert + \lvert \xi^{\smallleftarrow}_\beta \rvert$, the word $\xi_\alpha^{\smallrightarrow}\, \omega_{n- \lvert \xi^{\smallrightarrow}_\alpha \rvert - \lvert \xi^{\smallleftarrow}_\beta\rvert}\, \xi^\smallleftarrow_\beta \in \mathcal L(X)$ begins with $\alpha$ and ends with $\beta$. So the shift space $X$ is mixing.
\end{proof}

\subsubsection*{\cref{thm:almost_spec}}
\begin{proof}
($\Rightarrow$) Let $X = X(G,\mathcal{S})$. If $G$ is not irreducible, so that there there exist $a,b \in V(G)$ with no path from $a$ to $b$, then there is no word $\gamma \in \mathcal{L}(X)$ such that $a\gamma b \in \mathcal{L}(X)$. So $X$ does not have the weak specification property.

Now suppose that $G$ is irreducible but $\sup \Delta(S_a) = \infty$ for some $a\in V(G)$. Let $(a,b)$ be an edge in $G$ and $N > 0$. Listing the elements of $S_a$ as $s_1 < s_2 < \cdots$, we may choose $k$ so that $n_{k+1} - n_k > N+1$. For any $b \in V(G) \setminus \{a\}$, there exists no word $\gamma\in\mathcal{B}_N(X)$ such that $a^{n_k + 1} \gamma b \in \mathcal L(X)$, so $X$ does not have the weak specification property.

($\Leftarrow$) Assume $G$ is irreducible and $\sup \Delta(S_u)$ is finite for each $u \in V(G)$, and let $d = \max_u\!\big(\!\sup \Delta(S_u)\big)$. For each $a,b\in V(G)$, choose some word $\tau_{a,b} \in \mathcal L(X)$ that does not begin with $a$ and does not end with $b$ such that $a\,\tau_{a,b}\, b \in \mathcal{L}(X)$. We claim that $N = \max \{ \lvert \tau_{a,b}\rvert : a,b \in V(G)\} + 2d$ satisfies the criterion for the weak specification property as follows.

Let $\alpha, \beta \in \mathcal{L}(X)$, and suppose $\alpha$ ends with the block $a^{k_1}$ and $\beta$ begins with the block $b^{k_2}$. There exist positive integers $s_1 \in S_a$ and $s_2 \in S_b$ such that $0 \leq s_1 - k_1 \leq d$ and $0 \leq s_2 - k_2 \leq d$. Then $\alpha a^{s_1-k_1}\,\tau_{a,b}\,b^{s_2-k_2}\beta \in \mathcal L(X)$. Since $\alpha$ and $\beta$ were arbitrary, $X$ has the weak specification property.
\end{proof}

\subsubsection*{\cref{thm:specification}}
\begin{proof}
($\Rightarrow$) Let $X = X(G,\mathcal{S})$. First suppose that $X$ has the specification property. Certainly $X$ has the weak specification property. Let $N$ be the integer guaranteed by the specification property. Given any two words $\alpha, \beta \in \mathcal L(X)$ and an integer $n \geq N$, choose some $\tau \in \mathcal B_{n-N}(X)$ with $\alpha\tau \in \mathcal L(X)$. There is a word $\gamma \in \mathcal B_N(X)$ so that $\alpha\tau\gamma\beta \in \mathcal L(X)$. Since $\alpha$ and $\beta$ were arbitrary, $X$ is mixing.

($\Leftarrow$) Now suppose that $X$ is mixing and has the weak specification property. By \cref{thm:almost_spec}, the integer $d = \max_i\!\big(\!\sup \Delta (S_i)\big)$ is finite. For each $a,b\in V(G)$, choose an integer $N(a,b)$ such that for all $n\geq N(a,b)$, there exists some $\tau \in \mathcal{B}_n(X)$ that does not begin with $a$ nor end with $b$ and $a\tau b\in \mathcal L(X)$. The integers $N(a,b)$ exist because $X$ is mixing. Then $N = \max\{N(a,b) : a,b\in V(G)\} + 2d$ satisfies the criteria for the specification property as follows.

Let $\alpha,\beta \in \mathcal L(X)$. Suppose that $\alpha$ ends with the block $a^{k_1}$ and $\beta$ begins with the block $b^{k_2}$. There exist positive integers $s_1 \in S_a$ and $s_2 \in S_b$ such that $0 \leq s_1 - k_1 \leq d$ and $0 \leq s_2 - k_2 \leq d$. We can choose a word $\tau \in \mathcal L(X)$ with $\lvert\tau\rvert = N - (s_1 - k_1) - (s_2 - k_2)$ such that $a^{s_1-k_1}\tau b^{s_2-k_2} \in \mathcal L(X)$, which implies that $\alpha a^{s_1-k_1}\tau b^{s_2-k_2}\beta \in \mathcal L(X)$.
\end{proof}

\subsection{Eigenvalue lemmas}

\subsubsection*{\cref{thm:perron-frobenius}}
\begin{proof}
Suppose that $A$ is an $m\times m$ matrix. By a standard result from linear algebra, we may relabel the indices of $A$ so that $A$ is block upper-triangular and each block in the main diagonal is irreducible. Let $C_1, \dots, C_k$ denote the blocks on the main diagonal of $A$ and $\rho(C_i)$ be the spectral radius of $C_i$. Let $t$ be an index for which $\rho(C_t)$ is maximal; since the spectrum of $A$ is the union of the spectra of the $C_i$, we have $\rho = \rho(C_t)$. For each $1 \leq i \leq k$, let $c_i$ and $d_i$ be the constants supplied by \cref{thm:perron-frobenius-original} when applied to $C_i$, and set $\bar{d} = \max_i d_i$; also set $\alpha = \max\{1, \max_{i,j} A_{i,j}\}$.

\cref{thm:perron-frobenius-original} implies that
\[  \sum_{i,j} (A^n)_{i,j}
    \geq \sum_{i,j} (C_t^n)_{i,j}
    \geq c_t\rho(C_t)^n
    = c_t\rho^n, \]
which proves the first inequality. Let $K$ be the complete graph with loops on the vertex set $\{1,2,\dots, m\}$ (that is, $E(K) = V(K) \times V(K)$). Given a walk $w = (v_1\dots,v_s)$ on $K$, we define
\[ f(w) = A_{v_1,v_2}A_{v_2,v_3}\cdots A_{v_{s-1},v_s} \]
and note that $\sum_{i,j} (A^n)_{i,j} = \sum_{\lvert w \rvert = n+1} f(w)$. By a minor abuse of notation, we let $V(C_i)$ denote the set of indices that determine the matrix $C_i$ in $A$ and call these the \textit{blocks} of $V(K)$.

If $f(w) \neq 0$, then there are at most $k-1$ indices $r$ so that $v_r$ and $v_{r+1}$ are in different blocks. Therefore, we decompose $w$ as $w_1e_1w_2e_2\cdots e_{j-1}w_j$, where each $e_i$ is an edge between blocks and each $w_i$ is a walk inside a single block. An \textit{arrangement} of $j-1$ inter-block edges is a $j$-tuple $(\gamma_1,\dots,\gamma_j)$ of positive integers that sum to $n+1$ and a $(j-1)$-tuple $(e_1,\dots,e_{j-1})$ of inter-block edges of $K$. If $R$ is an arrangement of inter-block edges, we say that a walk $w$ in $K$ \textit{follows} the arrangement $R$ if $w$ can be decomposed as $w = w_1e_1\dots e_{j-1}w_j$ where each $w_i$ is a walk in a single block of $K$ with exactly $\gamma_i$ vertices.

An arrangement is entirely determined by the inter-block edges and their positions in the walk. Since there are $m^2$ edges in $K$ (not all of which are necessarily inter-block) and at most $n$ places for an inter-block edge to occur in a walk of $n+1$ vertices, the number of arrangements for the inter-block edges is at most
\[
    m^2n + (m^2n)^2 + \cdots + (m^2n)^{k-1} \leq (m^2n)^k.
\]
Fix one such arrangement $R$ with edge-tuple $(e_1,\dots, e_{j-1})$ and integer-tuple $(\gamma_1,\dots,\gamma_j)$. Let $C_{r_i}$ be the block that contains the tail of edge $e_i$ and $C_{r_j}$ be the block that contains the head of $e_j$, and let $W_R$ denote the set of all walks of length $n+1$ that follow the arrangement $R$. By \cref{thm:perron-frobenius-original}, we have
\begin{align*}
    \sum_{w \in W_R} f(w)
    &\leq \alpha^k \sum_{w \in W_R} f(w_1)\cdots f(w_j)\\
    &\leq \alpha^k \prod_{i=1}^j \sum_{s,t} (A_{C_{r_i}}^{\gamma_i-1})_{s,t}\\
    &\leq \alpha^k \prod_{i=1}^j \bar{d} \rho^{\gamma_i-1}\\
    &\leq (\alpha\bar{d})^k \rho^n.
\end{align*}
Therefore
\[ \sum_{i,j} (A^n)_{i,j} = \sum_{\lvert w \rvert = n+1} f(w)
    = \sum_{R} \sum_{w \in W_R} f(w)
    \leq (m^2n)^k (\alpha\bar{d})^k \rho^n. \]
So taking $d = (\bar{d}\alpha m^2n)^k$ finishes the proof.
\end{proof}

\subsubsection*{\cref{thm:lambda_is_increasing}}

{\noindent
\textit{Preliminaries.} %
}%
An \emph{irreducible component} of a graph $G$ is a maximal irreducible subgraph of $G$. If $G$ has irreducible components $I_1, \dots, I_k$, then the submatrices $[B_{\mathcal G}(x)]_{i,j \in I_r}$, for each $1 \leq r \leq k$, are called the \emph{irreducible components} of $B_{\mathcal G}(x)$. It is known (see, for example, \cite[Lemma 4.4.3]{Lind-Marcus}) that the spectral radius of $B_{\mathcal G}(x)$ (for $x > 0$) is equal to the maximum of the spectral radii of its irreducible components. Moreover, Theorem 4.4.7 of \cite{Lind-Marcus} implies that, for any two nonnegative $q \times q$ matrices $A$ and $B$ such that $A_{i,j} \leq B_{i,j}$ for each $1 \leq i,j \leq q$, with strict equality holding for at least one pair $(i,j)$ in each irreducible component, the spectral radius of $A$ is strictly less than the spectral radius of $B$.

\begin{proof}
    The spectral radius of a matrix is a continuous function of its entries.\footnote{This is a standard result that follows from a chain of continuous maps: the map from the matrix entries to the coefficients of the characteristic polynomial, the map from a complex polynomial to its set of roots (see, for example, \cite{continuous-roots}), and the map from the set of roots to its maximum complex norm.}  Since the entries of $B_{\mathcal G}(x)$ vary continuously with $x$, the function $\rho(x)$ is continuous.
    
    If $0 \leq x < y$, then $B_{\mathcal G}(x)_{i,j} \leq B_{\mathcal G}(y)_{i,j}$ for every $1\leq i,j\leq q$, with strict inequality holding for every pair $(i,j)$ such that $B_{\mathcal G}(y)_{i,j} \neq 0$. The preliminaries preceding this proof imply that $\rho(y) > \rho(x)$.
    
    Finally, we prove that the range of $\rho(x)$ is $[0,+\infty)$. Since $\rho(0) = 0$ and $\rho$ is continuous, it suffices to show that $\rho(x) \to \infty$ as $x \to \infty$. If $x \neq 0$, then $B_{\mathcal G}(x)$ is not the zero matrix, so $\rho(x) > 0$. For every $x \geq 0$, the inequality $B_{\mathcal G}(2x) \geq 2B_{\mathcal G}(x)$ holds entrywise. Again, the preliminaries imply that $\rho(2x) \geq 2\rho(x)$; therefore $\lim_{x \to \infty} \rho(x) = \infty$.
\end{proof}

\subsection{\texorpdfstring{$\beta$}{Beta}-expansion}

\subsubsection*{\cref{thm:mu_exp_of_1}}

\begin{proof}
Let $n = \lceil \beta\rceil-1$. We want to find a sequence $(x_i)_{i=1}^\infty$ such that
\begin{equation}\label{eq:beta-expansion}
    1 = \sum_{i=1}^k \frac{x_i}{\beta^i} + \frac{n}{\beta^{k+1}} + \frac{x_{k+1}}{\beta^{k+2}} + \frac{n}{\beta^{k+3}} + \frac{x_{k+2}}{\beta^{k+4}} + \cdots
\end{equation}
for some $k\in \N$. The sum
\[ \frac{n}{\beta^{k+1}} + \frac{n}{\beta^{k+3}} + \frac{n}{\beta^{k+5}} + \cdots = \frac{n/\beta^{k-1}}{\beta^2-1} \]
is less than 1 if $k$ is large enough. Fix such a $k$. We compute values of $(x_i)$ according to a greedy algorithm: After choosing values for $x_1,\dots,x_{m-1}$, we choose $x_m$ to be the largest value in $\{0,1,\dots,n\}$ such that, if $x_i = 0$ for every $i > m$, the right-hand side of \eqref{eq:beta-expansion} is at most 1. The resulting $\beta$-expansion represents a real number $x$, which is at most 1 by construction. Since we chose $x_i$ to be maximal at each step, we know that
\[ x + \frac{1}{\beta^{k+2i}} > 1 \]
whenever $x_{k+i} < n$. If there are infinitely many values of $i$ for which that inequality holds, then taking the limit as $i \to \infty$ shows that $x \geq 1$, in which case $(x_i)$ is a $\beta$-expansion of 1.

Now suppose there are finitely many $i$ such that $x_{i} < n$, and let $x_m$ be the last such term. The value of the tail after this point is 
\[  \sum_{i=m+1}^\infty \frac{n}{\beta^i}
    = \frac{n}{\beta^m(\beta-1)}
    \geq \frac{1}{\beta^m}, \]
contradicting the fact that $x_m$ is maximal. This means that $x_i = n$ for every $i \in \N$, so
\[  1 \geq \sum_{i=1}^\infty \frac{n}{\beta^i}
    = \frac{n}{\beta-1}\]
by construction. On the other hand, $\beta - 1 \leq n$, so $n/(\beta-1) \geq 1$. Therefore $(x_i)$ is a $\beta$-expansion of 1.

Since we chose $(x_i)$ by the greedy algorithm, we know that $x_1 > 0$. If $(x_i)$ does not contain two consecutive nonzero integers, then define
\[ y_i = \begin{cases}
x_1 - 1 &\text{if } i=1\\
x_i & \text{if } i > 1 \text{ and } x_i > 0\\
x_{i-1} & \text{if } i > 1 \text{ and } x_i = 0.
\end{cases} \]
Otherwise, set $y_i = x_i$ for all $i\in \N$. In either case, the sequence $(y_i)_{i=1}^\infty$ is a $\beta$-expansion of $1$ that satisfies the conditions.
\end{proof}

\vspace{3ex}

\noindent\phantomsection\addcontentsline{toc}{section}{Acknowledgements}%
{\large \textbf{Acknowledgments}}\\[0.3em]
I thank Elizabeth Sattler for suggesting an investigation into the entropy of unordered $\mathcal{S}$-limited shifts and her unfailingly insightful advice; Vaughn Climenhaga for several helpful comments; and the anonymous referees for their thorough review and trenchant suggestions.

\phantomsection
\addcontentsline{toc}{section}{References}
\bibliography{bibliography}
\bibliographystyle{custom-bibliography}

\vspace{2\baselineskip}

\noindent
\textsc{Travis Dillon}, \textit{email:} \texttt{dillont@mit.edu}

\end{document}